\documentclass[12pt]{amsart}
\usepackage{amssymb, enumerate, palatino}
\usepackage{amsfonts}
\usepackage{latexsym}
\usepackage{amscd}
\usepackage[mathscr]{euscript}

\vfuzz2pt 
\hfuzz2pt 


\addtolength{\textwidth}{4cm} \addtolength{\oddsidemargin}{-2cm}
\addtolength{\evensidemargin}{-2cm} \textheight=22.15truecm

\newcommand{\N}{{\mathbb{N}}}
\newcommand{\Z}{{\mathbb{Z}}}

\newcommand{\V}{{\mathrm{V}}}

\newcommand{\Cs}{{C$^*$-algebra}}
\newcommand{\cK}{{\mathcal{K}}}
\newcommand{\cM}{{\mathcal{M}}}
\newcommand{\nprecsim}{{\operatorname{\hskip3pt \precsim\hskip-9pt
      |\hskip6pt}}}

\newcommand{\uloopr}[1]{\ar@'{@+{[0,0]+(-4,5)}@+{[0,0]+(0,10)}@+{[0,0] +(4,5)}}^{#1}}
\newcommand{\uloopd}[1]{\ar@'{@+{[0,0]+(5,4)}@+{[0,0]+(10,0)}@+{[0,0]+ (5,-4)}}^{#1}}
\newcommand{\dloopr}[1]{\ar@'{@+{[0,0]+(-4,-5)}@+{[0,0]+(0,-10)}@+{[0, 0]+(4,-5)}}_{#1}}
\newcommand{\dloopd}[1]{\ar@'{@+{[0,0]+(-5,4)}@+{[0,0]+(-10,0)}@+{[0,0 ]+(-5,-4)}}_{#1}}

\newcommand{\luloop}[1]{\ar@'{@+{[0,0]+(-8,2)}@+{[0,0]+(-10,10)}@+{[0, 0]+(2,2)}}^{#1}}

\newtheorem{lem}{Lemma}[section]
\newtheorem{corol}[lem]{Corollary}
\newtheorem{theor}[lem]{Theorem}
\newtheorem{prop}[lem]{Proposition}
\newtheorem{defi}[lem]{Definition}
\newtheorem{defis}[lem]{Definitions}
\theoremstyle{definition}

\newtheorem{exem}[lem]{Example}

\newtheorem{nota}[lem]{Notation}
\newtheorem{rema}[lem]{Remark}

\begin{document}

\title[The Corona Factorization Property and Refinement Monoids]{The Corona Factorization Property and Refinement Monoids}%
\author{Eduard Ortega}
\address{Department of Mathematics and Computer Science, University of Southern Denmark, Campusvej 55, DK-5230, Odense M, Denmark} \email{ortega@imada.sdu.dk}
\author{Francesc Perera}
\address{Departament de Matem\`atiques, Universitat Aut\`onoma de Barcelona, 08193 Bellaterra, Barcelona, Spain} \email{perera@mat.uab.cat}
\author{Mikael R\o rdam}
\address{Department of Mathematical Sciences, University of Copenhagen, Universitetsparken 5, DK-2100, Copenhagen \O, Denmark} \email{rordam@math.ku.dk}

\thanks{} \subjclass[2000]{Primary 46L35, 06F05; Secondary
46L80} \keywords{C$^*$-algebras, Corona
Factorization Property, Real rank zero, Conical Refinement Monoids}
\date{\today}

\begin{abstract} The Corona Factorization Property of a \Cs,
  originally defined to study extensions of \Cs s, has turned out to say
  something
  important about intrinsic structural properties of the \Cs. We
  show in this paper that a $\sigma$-unital \Cs{} $A$ of real rank
  zero has the Corona Factorization Property  if and only if its
  monoid $\V(A)$ of Murray-von Neumann equivalence classes of
  projections in matrix algebras over $A$ has a certain (rather weak)
  comparability property that we call the Corona Factorization Property
  (for monoids). We show that a projection in such a \Cs{} is properly
  infinite if (and only if) a multiple of it is properly infinite.

The latter result is obtained from some more general result we establish about
conical refinement monoids. We show that the set of order
units (together with the zero-element) in a conical refinement monoid
is again a refinement monoid under the assumption that the monoid
satisfies weak divisibility; and
if $u$ is an element in a refinement monoid such that $nu$ is properly
infinite, then $u$ can be written as a sum $u=s+t$ such that $ns$ and
$nt$ are properly infinite.
\end{abstract}
\maketitle

\section*{Introduction}

\noindent
The Corona Factorization Property was introduced by Elliott and
Kucerovsky in \cite{EK}, and studied further by Kucerovsky and Ng in
\cite{KN},  as a tool to study extensions of \Cs s. The salient
feature of the Corona Factorization Property  is that
it ensures that (full) extensions are absorbing.

At the level of \Cs s,  the Corona Factorization
Property is easily defined: a \Cs{} $A$ has this property if and only if every
full projection in the multiplier algebra of the stabilization,
$A\otimes \cK$, is properly infinite.

The failure of having the Corona Factorization Property seems to be a
common feature of several examples of ``badly behaved'' \Cs s that
have been exhibited over the last 5--10 years. The vague phrase
``badly behaved'' should perhaps be replaced with ``infinite
dimensional'' in a suitable non-commutative sense. The example by the
third named author in \cite{rdoc} of a (simple, nuclear) \Cs{} $A$
which is not stable, but where a matrix algebra over $A$ is stable,
does not have the Corona Factorization Property. In fact, the existence
of a full, non-properly infinite projection in the multiplier algebra
over the stabilization of $A$ is the crucial property in that
example. Likewise for the example, also by the third named author, in
\cite{Ro} of a simple nuclear \Cs{} that contains both a finite and an
infinite projection.

It remains unknown if there is an example of a simple \Cs{} \emph{of
  real rank zero} which contains both finite and infinite
projections. If not, then every simple \Cs{} of real rank zero is
either stably finite or purely infinite. It was observed by Zhang that
this dichotomy holds for  simple \Cs s of real rank zero that satisfy
the Corona Factorization Property.

Many (classes of) \Cs s are known to satisfy the Corona Factorization
Property. These include purely infinite C$^*$-algebras (simple or
not), $\mathcal{Z}$-stable algebras (see \cite{hrw}), simple
\Cs s with real rank zero, stable rank one and weak
unperforation on their $K_0$-group (so, in particular, all
simple AF-algebras), \cite{KN}, algebras of the type
$C(X)\otimes\mathcal{K}$, where $X$ is a finite dimensional compact
Hausdorff metric space, \cite{ppv}, and also all unital \Cs s with
finite decomposition rank, \cite{OR}.

We show that a
$\sigma$-unital \Cs{} $A$ of real rank zero (simple or not) satisfies the
Corona Factorization Property if and only if its monoid $\V(A)$ of
Murray-von Neumann equivalence classes of projections satisfies a
certain (rather weak) comparability property that we call the
\emph{Corona Factorization Property} (for monoids). We also
characterize the $\sigma$-unital \Cs s $A$ of real rank zero where every
ideal has the Corona Factorization Property as those where $\V(A)$
satisfies a (more natural) comparability property, that we call the
\emph{strong Corona Factorization Property} (for monoids).

We further show that \Cs s with the Corona Factorization
Property (simple or not) satisfy an analog of the dichotomy for simple
\Cs s of real rank zero with the Corona Factorization Property mentioned
above. This is first established at the level of monoids. More precisely, if
$M$ is a conical refinement monoid with the strong Corona
Factorization Property, and if $u$ is an element in $M$ such that a
multiple of $u$ is properly infinite, then $u$ itself is properly
infinite. (We say that $u$ is properly infinite if $2u \le u$.) If $M$
has the Corona Factorization Property, then the statement above only
holds for \emph{order units}. This result can then be translated
into a statement about \Cs s of real rank zero with the Corona
Factorization Property, saying that any full projection $p$ in the
stabilization of such a \Cs, for which some multiple $p \oplus p
\oplus \cdots \oplus p$ is properly infinite, is itself properly
infinite.

In outline, the paper is as follows. After recalling basic
definitions, we proceed in Section~2 to define the concept of weak
divisibility of order units, a property that implies that order
units can be split into a sum of several order units.
We prove that any conical
refinement monoid with a properly infinite order unit has weak
divisibility of order units. This includes $\V(A)$, for any properly
infinite \Cs{} $A$ with real rank zero.

Section~3 is devoted to showing that in a refinement
monoid $M$ with weak divisibility of order units, the submonoid
$M^*\cup\{0\}$ of order units together with zero is a simple
refinement monoid. This allows for the reduction of certain arguments to
the simple case. For example, we show that for a real rank zero
algebra $A$ such that $\V(A)$ has weak divisibility of order units and
strict unperforation, then the subsemigroup of classes of full
projections is cancellative. A further consequence is obtained in
Section~4, where we show that, in a conical refinement monoid with an
order unit $u$ such that $nu$ is properly infinite, $u=s+t$ for
order units $s$ and $t$ such that $ns $ and $nt$ are also properly
infinite.

Finally, in Section~5 we obtain the results, described above, on when
$\sigma$-unital \Cs s of real rank zero have the Corona Factorization
Property; and we show that (full) projections in such \Cs s are properly
infinite if some multiple of them are properly infinite.

\section{Preliminaries}

All monoids in this paper will be abelian, written additively. The
main object of study in this paper is the monoid of (Murray-von
Neumann) equivalence classes of projections associated to a
C$^*$-algebra. We briefly recall its construction below.

We say that two projections $p$ and $q$ in a C$^*$-algebra $A$ are Murray-von Neumann \emph{equivalent} provided there is a partial isometry $v$ such that $p=vv^*$ and $v^*v=q$. One can extend this to the set $\mathcal{P}(M_{\infty}(A))$ of projections in $M_{\infty}(A)$, which becomes a congruence, and thereby construct
\[
\V(A)=\mathcal{P}(M_{\infty}(A))/\!\!\sim\,,
\]
where $[p]\in \V(A)$ stands for the equivalence class that contains the projection $p$ in $M_{\infty}(A)$.
This becomes an abelian monoid with operation $[p]+[q]=[p\oplus q]$, where $p\oplus q$ refers to the matrix $\left(\begin{smallmatrix} p & 0 \\ 0 & q\end{smallmatrix}\right)$.

We say that a monoid $M$ is \textit{conical} if $x+y=0$ only when $x=y=0$. Notice that, for any C$^*$-algebra $A$, the projection monoid $\V(A)$ is conical. Therefore, we shall assume that all our monoids are conical although, for emphasis, this assumption will be repeated in our statements of results.

A class of C$^*$-algebras for which the monoid $\V(A)$ captures a good deal of their structure is that of real rank zero. Recall that $A$ has real rank zero provided that the set of self-adjoint, invertible elements is dense in the set of all self-adjoint elements (see \cite{bp}). It is well-known that this condition allows to produce projections on demand, as is equivalent, for example, to the statement asserting that each hereditary subalgebra contains an approximate unit consisting of projections.

As shown by Ara and Pardo (\cite{AP}), based on work of Zhang, for C$^*$-algebras with real rank zero, the projection monoid enjoys the additional property of having refinement (see, e.g. \cite{dob}, \cite{WE}), which we now proceed to define:

We say that a monoid $M$ is a \emph{refinement monoid} if whenever  $a_1+a_2=b_1+b_2$ in $M$, there exist elements $z_{ij}$ in $M$ such that $a_i=z_{1i}+z_{2i}$ and
$b_i=z_{i1}+z_{i2}$ for $i=1,2$. The elements $z_{ij}$ are referred to as a \emph{refinement} of the original equation. For ease of notation, we shall write these equations as follows
\[
\begin{array}{c|cc}
  & a_1& a_2  \\
\hline b_1 & z_{11}&z_{12} \\
  b_2 & z_{21}& z_{22}  \end{array}
\]
and refer to the above display as a refinement matrix.

All abelian monoids have a natural pre-order, the \emph{algebraic
  ordering}, defined as follows: if $x$, $y\in M$, we write $x\leq y$
if there is $z$ in $M$ such that $x+z=y$. In the case of $\V(A)$, the
algebraic ordering is given by Murray-von Neumann subequivalence, that
is, $[p]\leq [q]$ if and only if there is a projection $p'\leq q$ such
that $p\sim p'$. We also write, as is customary, $p\precsim q$ to mean
that $p$ is subequivalent to $q$.

\section{Weak divisibility}

The purpose of this section is to consider a weak form of divisibility
that appears quite frequently both in ring theory and operator
algebras (see \cite{peror} and \cite{AGPS}). In the sequel it will be
important to apply this to the set of full projections, which in the
monoid-theoretical context corresponds to the set of order units.

Let us recall that an element $u$ in a monoid $M$ is an \emph{order
  unit} provided $u\neq 0$ and, for any $x$ in $M$, there is
$n\in\mathbb{N}$ such that $x\leq nu$. In the case that every non-zero
element of $M$ is an order unit, then $M$ is said to be
\emph{simple}. Recall that a projection $p$ of a C$^*$-algebra $A$ is
\emph{full} if the closed, two-sided ideal it generates is $A$. This
is equivalent to saying that $[p]$ is an order unit as an element of
$\V(A)$. If $A$ is a simple C$^*$-algebra, then $\V(A)$ is a simple
monoid.

For a monoid $M$, let us denote by $M^*$ the subsemigroup of order
units of $M$ (so in the simple case, $M^*=M\setminus\{0\}$). Notice
that $M^*\cup\{0\}$ is a submonoid of $M$.

\begin{lem}\label{lemma_3_1}
Let $M$ be a conical refinement monoid. Given two order units $u$ and $v$ in $M$, there exists an order unit $w\in M$ such that $w\leq u,v$.
\end{lem}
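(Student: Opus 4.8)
The plan is to use the refinement (Riesz decomposition) property to split $u$ into small pieces that all lie below $v$, and then to glue those pieces back together into a single order unit that still sits below both $u$ and $v$. Since $v$ is an order unit, there is $m\in\N$ with $u\le mv$. Refinement monoids enjoy the Riesz decomposition property, so I would first write $u=c_1+\cdots+c_m$ with $c_i\le v$ for every $i$; note that each $c_i$ is then automatically $\le u$ as well, since $u=c_i+\sum_{j\ne i}c_j$.

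The temptation is to hope that one of the $c_i$ is already an order unit, but this is false in general: in $\N^3$, for instance, one can split an order unit as a sum of two elements neither of which is an order unit, each of them below a fixed $v$. So a single piece will not do, and the real task is to manufacture a \emph{common upper bound} of all the $c_i$ that nonetheless remains below $v$ and below $u$.

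I expect the decisive step to be a ``join within $v$'' construction, which is where the refinement hypothesis does the work. Given $a,b\le v$, I would write $v=a+a'=b+b'$ and refine the equation $a+a'=b+b'$ to obtain elements $z_{ij}$ with $a=z_{11}+z_{12}$, $b=z_{11}+z_{21}$, $a'=z_{21}+z_{22}$, and $b'=z_{12}+z_{22}$. Setting $w=z_{11}+z_{12}+z_{21}$, one checks directly that $a\le w$ and $b\le w$, that $w\le v$ (because $w=a+z_{21}\le a+a'=v$), and that $w\le a+b$ (because $a+b=w+z_{11}$). Thus $w$ dominates both $a$ and $b$ yet stays inside the order interval below $v$ and is bounded above by $a+b$. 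This little computation is the heart of the argument: it is exactly what allows pieces to be merged without escaping below $v$ while retaining control from above.

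Finally I would iterate this construction. Put $w_1=c_1$, and having produced $w_{k-1}\le v$ with $c_1+\cdots+c_{k-1}\ge w_{k-1}\ge c_i$ for all $i<k$, apply the ``join within $v$'' step to $w_{k-1}$ and $c_k$ (both $\le v$) to get $w_k\le v$ satisfying $w_{k-1},c_k\le w_k\le w_{k-1}+c_k\le c_1+\cdots+c_k$. Taking $w:=w_m$ then yields $w\le v$, $w\le c_1+\cdots+c_m=u$, and $w\ge c_i$ for every $i$. Summing $m$ copies gives $mw\ge c_1+\cdots+c_m=u$, so $u\le mw$; since $u$ is an order unit, this forces $w$ to be an order unit as well. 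As $w\le u$ and $w\le v$, the element $w$ has all the required properties.
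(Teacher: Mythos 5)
Your proof is correct, but it takes a genuinely different route from the paper's. The paper goes the other way around: since $u$ is an order unit it writes $v+t=nu$, and then invokes the monotone-refinement result \cite[Lemma 1.9]{WE} to produce a refinement of this equation whose first row is increasing, $v_1\leq v_2\leq\cdots\leq v_n$; the element $w=v_n$ then works immediately, since $v_n\leq u$, $v_n\leq v$, and $v\leq nv_n$. You instead cut $u$ along the copies of $v$ (Riesz decomposition, which indeed follows from refinement by induction), correctly observe that no single piece $c_i$ need be an order unit, and then repair this by an explicit ``join within $v$'': from a refinement of $a+a'=b+b'$ you take the three-entry sum $z_{11}+z_{12}+z_{21}$, which dominates both $a$ and $b$ while staying below $v$ and below $a+b$, and you iterate to get a common upper bound $w$ of all the $c_i$ with $w\leq u,v$ and $u\leq mw$. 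All of your computations check out (including the final step, where conicality guarantees $w\neq 0$). What the paper's proof buys is brevity: the directedness issue is outsourced to Wehrung's lemma. What yours buys is self-containedness: you use only the bare $2\times 2$ refinement axiom, and your join computation is essentially the same three-entry trick that appears later in the paper in Lemma \ref{lemma_1_1} (due to Goodearl), and is in essence the inductive step hidden inside Wehrung's monotone-refinement lemma; the swap of the roles of $u$ and $v$ between the two arguments is immaterial.
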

\begin{proof} Since $u$ is an order unit, there exist $n\in \mathbb{N}$ and $t\in M$ such that $v+t=
nu$. It follows then from \cite[Lemma 1.9]{WE} that there is a refinement
$$\begin{array}{c|cccc}
 & u & u &\cdots & u \\
 \hline v & v_1 & v_2 & \cdots & v_n \\
 t & t_1 & t_2 & \cdots & t_n
\end{array}\,,$$
with $v_1\leq v_2\leq\cdots\leq v_n$ and $t_1\geq t_2\geq\cdots\geq
t_n$. Set $w=v_n$, which is an order unit since $v$
is an order unit and $v\leq n w$.
\end{proof}

\begin{defi}
Let $M$ be a monoid. An element $x$ in $M$ will be termed \emph{weakly
divisible} if there exist $a$ and $b$ in $M$ such that $x=2a+3b$.  We say
that $M$ is weakly divisible if every element is weakly divisible.
We say that $M$ has \emph{weak divisibility for order units} if every order unit is weakly divisible.
\end{defi}

Weak divisibility of order units immediately enables us to decompose order units into sums of order units, as the following lemma testifies:

\begin{lem}\label{lemma_3_2}
Let $M$ be a monoid that has weak divisibility of
order units. Given any order unit $u\in M$ there exist order units $v,
w\in M$ such that $u=v+w$ and $v\leq w$.
\end{lem}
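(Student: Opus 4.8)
The plan is to use weak divisibility directly to produce the splitting and then verify that each summand is an order unit. First I would invoke the hypothesis that $M$ has weak divisibility of order units to write the given order unit as $u = 2a + 3b$ for suitable $a, b \in M$.

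The key idea is to distribute these summands asymmetrically. I would set $v = a + b$ and $w = a + 2b$. Then immediately $u = v + w$, and since $w = v + b$ we obtain $v \leq w$ for free; splitting the three copies of $b$ as one into $v$ and two into $w$ is precisely what produces the inequality $v \leq w$ with no further work.

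The remaining and genuinely essential point is to check that $v$ and $w$ are order units. Here I would rely on the general principle that if $z$ is an order unit and $z \leq k z'$ for some $k \in \mathbb{N}$, then $z'$ is an order unit as well, since any $x$ satisfies $x \leq mz \leq mk z'$. Concretely, $3v = 3a + 3b = u + a \geq u$ and $3w = 3a + 6b = u + (a + 3b) \geq u$, so $u \leq 3v$ and $u \leq 3w$; as $u$ is an order unit, both $v$ and $w$ inherit the property. Conicality guarantees in addition that $v, w \neq 0$, because $u \leq 3v$ with $v = 0$ would force $u = 0$, contradicting that $u$ is an order unit.

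The main subtlety—less an obstacle than the reason the split must be chosen carefully—is ensuring that each summand dominates a multiple of $u$. This is exactly what forces both $v$ and $w$ to retain a copy of $a$: a piece built from $b$ alone, such as $b$ or $2b$, need not dominate any multiple of $u = 2a + 3b$, since it carries no $a$-component. Keeping one $a$ in each of $v$ and $w$ is what makes the order-unit verification go through, and it is compatible with the simultaneous requirement $v \leq w$ imposed by the uneven distribution of the $b$'s.
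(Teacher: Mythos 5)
Your proposal is correct and is essentially the paper's own proof: the same decomposition $u=2a+3b$, the same choice $v=a+b$, $w=a+2b$, and the same order-unit verification via $u\leq 3v$ and $u\leq kw$ (the paper uses the slightly sharper $u\leq 2w$, which is immaterial). Your extra remarks on conicality and non-zero-ness are fine but implicit in the paper's standing assumption that all monoids are conical.
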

\begin{proof}
By assumption, there are elements $x$ and $y$ in $M$ with $u=2x+3y$. Put $v=x+y$ and
$w=x+2y$. Then $v$ and $w$ are order units since $u\leq
3v$ and $u\leq 2w$. Clearly $u=v+w$ and $v\leq w$.
\end{proof}
\begin{nota}
For elements $x$, $y$ in a monoid $M$, we will use $x\le^* y$ to mean $x+z=y$ for an order unit $z$ in $M$. If $M$ is simple, then all we are asking for is that $z\neq 0$. In particular, in the simple case $x<y$ (taken to mean $x\leq y$ and $x\neq y$) implies $x\leq^*y$.
\end{nota}
Our observation above yields:
\begin{corol}
\label{cor:oubelow}
Let $M$ be a conical refinement monoid with weak divisibility of order units. Given two order units $u,v$ in $M$, there is an order unit $w$ such that $w\leq^*u,v$.
\end{corol}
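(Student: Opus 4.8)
The plan is to obtain the conclusion by combining the two preceding lemmas with one elementary observation about order units. Recall what $w \leq^* u$ demands: a decomposition $u = w + s$ in which the complementary summand $s$ is itself an order unit. Lemma~\ref{lemma_3_1} already produces a common \emph{lower bound} in the ordinary algebraic order, so the entire task is to upgrade that bound from $\leq$ to $\leq^*$, i.e.\ to arrange that the complements are order units rather than arbitrary elements. The mechanism for doing so will be Lemma~\ref{lemma_3_2}, which lets us peel off an order unit from the common lower bound and fold it into the complements.

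Concretely, I would first apply Lemma~\ref{lemma_3_1} to the given order units $u$ and $v$ to obtain an order unit $w_0$ with $w_0 \leq u$ and $w_0 \leq v$; by definition of the algebraic order this furnishes elements $s, t \in M$ with $u = w_0 + s$ and $v = w_0 + t$. At this stage $s$ and $t$ are merely elements of $M$ and need not be order units, which is exactly why $w_0$ is only a $\leq$-lower bound and not yet a $\leq^*$-lower bound. Next I would apply Lemma~\ref{lemma_3_2} to the order unit $w_0$, writing $w_0 = w + e$ with $w$ and $e$ both order units. Substituting and regrouping then gives $u = w + (e + s)$ and $v = w + (e + t)$, and I would take this $w$ as the desired common lower bound.

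It remains to see that the new complements $e + s$ and $e + t$ are order units, and this is the crux of the argument. The key observation is that in any monoid, if $e$ is an order unit and $e \leq x$, then $x$ is again an order unit: for any $y \in M$ one has $y \leq ne \leq nx$ for some $n$, and $x \neq 0$ since $e \neq 0$. Applying this with $x = e + s \geq e$ and $x = e + t \geq e$ shows that both complements are order units, so $w \leq^* u$ and $w \leq^* v$, and $w$ itself is an order unit, completing the proof.

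I do not expect a genuine obstacle here, as the result is essentially a bookkeeping consequence of the two lemmas; the only substantive point is recognizing that adding an arbitrary element to an order unit preserves the order-unit property, which is what converts the $\leq$ of Lemma~\ref{lemma_3_1} into the sharper $\leq^*$.
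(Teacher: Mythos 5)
You have a correct proof, built from exactly the same ingredients as the paper's --- Lemma~\ref{lemma_3_1}, Lemma~\ref{lemma_3_2}, and the observation that any element dominating an order unit is itself an order unit --- but applied in the reverse order. The paper splits first and bounds second: it writes $u=u_1+u_2$ and $v=v_1+v_2$ with all $u_i,v_i$ order units (two applications of Lemma~\ref{lemma_3_2}) and then applies Lemma~\ref{lemma_3_1}%
to get an order unit $w\leq u_2,v_2$, so that the complements of $w$ inside $u$ and $v$ contain the order units $u_1$ and $v_1$ respectively. You bound first and split second: Lemma~\ref{lemma_3_1} gives an order unit $w_0\leq u,v$, and then a single application of Lemma~\ref{lemma_3_2} splits $w_0=w+e$ into two order units, so that the complements of $w$ contain $e$. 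The two arguments are dual to one another and equally valid; yours is marginally more economical, invoking the splitting lemma once rather than twice. One small caveat: your closing observation is stated as holding ``in any monoid,'' but the conclusion $x\neq 0$ genuinely needs conicality --- in the additive group $\mathbb{Z}$, viewed as a monoid, every nonzero element is an order unit and lies below $0$, which is not one. Since the $M$ of the corollary is conical, this costs nothing here, but the hypothesis should be recorded.
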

\begin{proof}
According to Lemma \ref{lemma_3_2}, we may write $u=u_1+u_2$ and $v=v_1+v_2$, where $u_i$ and $v_i$ are order units. Apply now Lemma \ref{lemma_3_1} to find an order unit $w$ such that $w\leq u_2,v_2$, whence $w\leq^* u,v$.
\end{proof}

Recall that an ideal of a monoid $M$ is a submonoid $I$ which is
hereditary with respect to the algebraic ordering. In other words, for
$x$ and $y$ in $M$, we have $x+y\in I$ if and only if $x, y\in I$. For
a C$^*$-algebra $A$ and a closed, two-sided ideal $I$ of $A$, the
monoid $\V(I)$ naturally becomes an ideal of $\V(A)$. Once we have an
ideal, we can define a congruence on $M$ by declaring $x\sim y$ if
there are elements $a$, $b$ in $I$ such that $x+a=y+b$. Then
$M/I=M/\!\!\sim$ naturally becomes a monoid, whose elements will be
denoted by $\overline{x}$, for $x$ in $M$. In this language, simple
monoids are those which do not have non-trivial ideals.

If $M$ is a refinement monoid and $I$ is an ideal of $M$, then it is
easy to see that both $I$ and $M/I$ are refinement monoids (see
\cite{agop}). Again, for C$^*$-algebras of real rank zero, the natural
quotient map induces an isomorphism $\V(A)/\V(I)\cong \V(A/I)$ (see
\cite{agop}).

Recall that a non-zero element $x$ in a monoid $M$ is an \emph{atom}
(or \emph{irreducible}) if whenever $x=a+b$, then either $a=0$ or
$b=0$. A monoid without atoms is called \textit{atomless}.

Given an element $u$ in a monoid $M$, let us denote by $\langle u\rangle$
the submonoid generated by $u$, that is, $\langle
u\rangle=\{0,u,2u,\ldots\}$. Observe that
if $M$ is a refinement monoid, then $u$ is an atom in $M$ if and only
if $\langle u\rangle$
is an ideal of $M$ and $\langle
u\rangle$ is isomorphic to $\mathbb{Z}^+$ (via $u\mapsto 1$). It was
shown in \cite[Theorem 6.7]{AGPS} (see also
Remark~\ref{rem:PR} below) that an element in a conical refinement
monoid is weakly divisible if and only if it is not an atom in any
quotient. Combining this result with the observation made above, we
get the following:

\begin{prop}
\label{prop:wd}
Let $M$ be a conical refinement monoid. Then $M$ is weakly
divisible if and only if no non-zero quotient of $M$ has an
ideal isomorphic to $\mathbb{Z}^+$.
\end{prop}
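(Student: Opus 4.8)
The plan is to chain together the two facts recalled immediately before the statement. The first is the characterization from \cite[Theorem 6.7]{AGPS} that an element of a conical refinement monoid is weakly divisible exactly when it is not an atom in any quotient. The second is the observation that, in a refinement monoid, $u$ is an atom if and only if $\langle u\rangle$ is an ideal isomorphic to $\mathbb{Z}^+$. The only additional ingredient is that every quotient $M/I$ of a refinement monoid is again a refinement monoid, which is noted above.

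First I would reformulate the left-hand side. By definition $M$ is weakly divisible precisely when every $x\in M$ is weakly divisible, and by \cite[Theorem 6.7]{AGPS} this amounts to saying that no $x\in M$ is an atom in any quotient of $M$. I claim this is equivalent to the statement that no non-zero quotient $M/I$ possesses an atom whatsoever. One direction is trivial. For the other, if some $M/I$ had an atom, then---the quotient map $M\to M/I$ being surjective---that atom would be the image $\overline{y}$ of some $y\in M$, exhibiting $y$ as an atom in the quotient $M/I$ and contradicting its weak divisibility. Hence $M$ is weakly divisible if and only if no non-zero quotient of $M$ has an atom.

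It then remains to pass from ``having an atom'' to ``having an ideal isomorphic to $\mathbb{Z}^+$'' inside a fixed quotient. Since $M/I$ is a refinement monoid, the atom observation applies verbatim there: an element $\overline{u}$ is an atom of $M/I$ if and only if $\langle\overline{u}\rangle$ is an ideal of $M/I$ isomorphic to $\mathbb{Z}^+$. Thus if $M/I$ has an atom it has an ideal isomorphic to $\mathbb{Z}^+$; conversely, if $J\subseteq M/I$ is an ideal isomorphic to $\mathbb{Z}^+$, its generator $\overline{u}$ (the image of $1$) satisfies $\langle\overline{u}\rangle=J$, so the same observation forces $\overline{u}$ to be an atom. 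Combining this with the previous paragraph gives the desired equivalence.

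I do not anticipate a genuine obstacle, as the result is largely a repackaging of tools already assembled. The only points demanding care are the quantifier manipulation in the second step---verifying that quantifying $x$ over all of $M$ and over all quotients coincides with forbidding atoms in any single quotient, which hinges on surjectivity of the quotient maps---and confirming that the atom/$\mathbb{Z}^+$ correspondence persists in each $M/I$, which it does exactly because refinement is inherited by quotients.
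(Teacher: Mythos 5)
Your proof is correct and takes essentially the same route as the paper's: the paper obtains Proposition \ref{prop:wd} precisely by combining \cite[Theorem 6.7]{AGPS} with the observation that, in a (conical) refinement monoid, $u$ is an atom if and only if $\langle u\rangle$ is an ideal isomorphic to $\mathbb{Z}^+$. You merely make explicit the quantifier bookkeeping (via surjectivity of the quotient maps) and the fact that refinement and conicality pass to quotients, which the paper leaves implicit.
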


As an immediate consequence, we obtain the following corollary:

\begin{corol}
\label{lem:simple}
A conical simple refinement monoid $M$ is atomless, and hence weakly
divisible, if and only if it
is not isomorphic to $\Z^+$.
\end{corol}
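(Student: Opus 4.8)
The plan is to read off both equivalences directly from Proposition~\ref{prop:wd} and from the atom characterization recalled just before it, using the simplicity of $M$ to collapse the quantifiers over quotients and ideals. First I would observe that since $M$ is simple its only ideals are $\{0\}$ and $M$, so its only non-zero quotient (up to isomorphism) is $M$ itself. Hence the criterion in Proposition~\ref{prop:wd} --- that no non-zero quotient of $M$ has an ideal isomorphic to $\Z^+$ --- reduces to the single statement that $M$ has no ideal isomorphic to $\Z^+$; and again by simplicity the only candidate non-zero ideal is $M$, so this is exactly the assertion that $M\not\cong\Z^+$ (note $\{0\}\not\cong\Z^+$, so the zero ideal is never a problem). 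Thus Proposition~\ref{prop:wd} already yields that $M$ is weakly divisible if and only if $M\not\cong\Z^+$.

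It then remains to show that being atomless is also equivalent to $M\not\cong\Z^+$. For one direction, if $M\cong\Z^+$ then the generator is an atom, so $M$ is not atomless. For the converse, suppose $M$ has an atom $u$. Recall from the discussion preceding Proposition~\ref{prop:wd} that in a refinement monoid an atom $u$ has the property that $\langle u\rangle$ is an ideal isomorphic to $\Z^+$. Since $u\neq 0$ this ideal is non-zero, so simplicity forces $\langle u\rangle=M$, whence $M\cong\Z^+$. Taking contrapositives, $M\not\cong\Z^+$ implies that $M$ is atomless.

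Combining the two equivalences, atomlessness, weak divisibility, and the condition $M\not\cong\Z^+$ are all equivalent for a conical simple refinement monoid; in particular atomless implies weakly divisible, which is the parenthetical claim. The argument is essentially bookkeeping once Proposition~\ref{prop:wd} and the atom characterization are in hand, so I do not anticipate a genuine obstacle. The only point requiring a little care is the reduction via simplicity, namely verifying that the sole non-zero quotient is $M$ and the sole non-zero ideal is $M$, so that both the weak-divisibility criterion and the atom criterion funnel into the single isomorphism $M\cong\Z^+$.
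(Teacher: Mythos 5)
Your proof is correct and follows exactly the route the paper intends: the paper states this corollary as an immediate consequence of Proposition~\ref{prop:wd} together with the preceding observation that, in a refinement monoid, $u$ is an atom if and only if $\langle u\rangle$ is an ideal isomorphic to $\Z^+$, and your argument is precisely that reduction with the simplicity bookkeeping made explicit. No gaps.
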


\begin{rema} \label{rem:PR}
In the countable case, which will be of interest as $\V(A)$ is
countable whenever $A$ is a separable C$^*$-algebra, Proposition
\ref{prop:wd} can be obtained by the arguments in \cite{peror}. We
briefly indicate how to proceed in that case. First, recall that a
dimension monoid is, by definition, an inductive limit of simplicial
monoids or, equivalently, a monoid that can be represented as $\V(A)$
for an AF-algebra $A$. Now, if $M$ is a countable refinement monoid,
use \cite[Theorem 3.9]{peror} to find a (countable) dimension monoid
$\Delta$ and a surjective map $\alpha\colon\Delta\to M$ such that
$x\propto y$ if and only if $\alpha(x)\propto\alpha (y)$. (Here,
$x\propto y$ means that $x\leq ny$ for some natural number $n$.) In
particular, such a map induces an isomorphism between the ideal
lattices of $\Delta$ and $M$. By the observation above, that $\langle u
\rangle$ is an ideal in $M$ and $\langle u
\rangle \cong \Z^+$ if and only if $u$ is an atom in $M$, one
can see that the
map $\alpha$ also induces a one-to-one correspondence between
atoms in $\Delta$ and atoms in $M$.  This allows to reduce the problem
to dimension monoids. And in that case the result holds after
\cite[Proposition 5.6]{peror}.
 \end{rema}

Recall that a non-zero element $x$ in a monoid $M$ is termed
\emph{infinite} provided that there is a non-zero element $y$ such
that $x=x+y$. The (non-zero) element $x$ is \emph{properly infinite}
if $2x\leq x$ (whence $mx\leq x$ for all $m\in\mathbb{N}$). It follows
from the definitions that, if $x\leq y$ and $x$ is infinite, then so
is $y$. If $x$ is properly infinite and $x\le y$, then $y$ need not be
properly infinite. However, if we also assume that $y\propto x$, then
$y$ is necessarily properly infinite. (Indeed, if $x\le y\le nx$ and
$x$ is properly infinite, then $2y\le 2nx\le x\le y$.)

Evidently, every properly infinite element is infinite. It is well
known that the converse also holds in the simple case. Indeed, if $M$
is simple and  $x\in M$ is infinite, then $x=x+y$ for some non-zero
$y$ in $M$. Hence $x=x+my$ for all $m\in \mathbb{N}$. Now, $y$ is an
order unit, whence there is some $m\in\mathbb{N}$ such that $2x\leq my\leq
x$, and so $x$ is properly infinite.

Recall that a (non-zero) projection $p$ in a C$^*$-algebra is
\emph{properly infinite} if $p\oplus p\precsim p$, which is of course
equivalent to the element $[p]\in \V(A)$ being properly
infinite as defined above.

We record the following easy facts for future reference:
\begin{lem}
\label{lem:piou}
Let $M$ be a conical monoid with a properly infinite order unit $u$. Then
\begin{enumerate}
\item There is a properly infinite order unit $v$ with $u\leq v$ and $v+u=u$.
\item For any other order unit $w$, there is $n\in\mathbb{N}$ such that $nw$ is properly infinite.
\end{enumerate}
\end{lem}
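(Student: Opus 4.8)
The plan is to derive both statements directly from the defining inequality $2u\le u$ and from the fact recorded just above the lemma: if $x$ is properly infinite, $x\le y$, and $y\propto x$, then $y$ is properly infinite.

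For part (i), I would start from proper infiniteness of $u$: since $2u\le u$, there is an element $z\in M$ with $2u+z=u$. Setting $v:=u+z$, a one-line computation gives $v+u=2u+z=u$, while the very form $v=u+z$ shows $u\le v$. Because $u$ is an order unit and $u\le v$, the element $v$ is automatically an order unit as well (any $x$ satisfies $x\le ku\le kv$). It then remains to check that $v$ is properly infinite, and here the key observation is that the relation $v+u=u$ also yields $v\le u$; combined with $u\le v$ this gives $v\propto u$, so the recorded fact, applied with $x=u$, $y=v$, $n=1$, promotes $v$ to a properly infinite element.

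For part (ii), let $w$ be any order unit. Since $w$ is an order unit there is $m\in\mathbb{N}$ with $u\le mw$, and since $u$ is an order unit there is $k\in\mathbb{N}$ with $w\le ku$, whence $mw\le mk\,u$. Thus $u\le mw\le(mk)u$, so $mw\propto u$, and again invoking the recorded fact with $x=u$ and $y=mw$ shows that $mw$ is properly infinite. Taking $n=m$ finishes the argument.

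I do not expect a genuine obstacle here, as the content is essentially contained in the preliminary remarks. The only points requiring care are the double use of the order-unit property in opposite directions in part (ii) (once to dominate $u$ by a multiple of $w$, once to dominate that multiple by a multiple of $u$), and the observation in part (i) that $v+u=u$ secretly encodes both $u\le v$ and $v\le u$. Once these comparisons are in place, each conclusion follows mechanically from the principle that a properly infinite element absorbs any element it dominates and is dominated by up to proportionality.
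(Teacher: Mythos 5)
Your proposal is correct and follows essentially the same route as the paper: part (i) uses the identical construction $v=u+t$ from $2u+t=u$, and part (ii) uses the order-unit property in both directions to trap a multiple of $w$ between $u$ and a multiple of $u$, exactly as the paper does (your explicit appeal to the $\propto$-principle is just the paper's inline computation $2nw\le u+u\le u\le nw$ phrased via the fact recorded before the lemma).
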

\begin{proof}
(i). Since $2u\leq u$, there is an element $t$ such that $2u+t=u$. Now
put $v=u+t$.

(ii). Given an order unit $w$, there is $n \in \mathbb{N}$ with $u\leq
nw$. Since $u$ is properly infinite, $nw\leq u$. Now, $2nw\leq
u+u\leq u\leq nw$.
\end{proof}

Observe that, if $M$ is an abelian monoid, $I$ is an ideal in $M$,
and $u\in
I$, then $u$ is infinite (respectively, properly infinite) in $M$ if
and only if $u$ is infinite (respectively, properly infinite) in
$I$. Also, if $u\notin I$ and $u$ is properly infinite, then
$\overline{u}$ is properly infinite in $M/I$.

\begin{prop}
\label{prop:wdou}
Let $M$ be a conical refinement monoid with a  properly infinite
order unit. Then $M$ has weak divisibility for order units.
\end{prop}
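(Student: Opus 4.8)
The plan is to reduce the claim to the atom-free characterization of weak divisibility. By \cite[Theorem 6.7]{AGPS}, recalled just before Proposition~\ref{prop:wd}, an element of a conical refinement monoid is weakly divisible if and only if it is not an atom in any quotient. So, fixing an arbitrary order unit $u\in M$, it suffices to prove that its image $\overline{u}$ is never an atom in a quotient $M/I$.

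First I would check that the hypothesis descends to quotients. Let $e$ be the given properly infinite order unit. If an ideal $I$ contains $e$, then for every $x\in M$ we have $x\le ne$ for some $n$, and $ne\in I$ forces $x\in I$ by hereditarity; thus $I=M$. Hence for every proper ideal $I$ we have $e\notin I$, so $\overline{e}$ is a properly infinite order unit of $M/I$ (it is properly infinite by the observation preceding Lemma~\ref{lem:piou}, and it is an order unit, being the nonzero image of an order unit under the surjection $M\to M/I$).

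Next, fix a proper ideal $I$ (the case $I=M$ is trivial, since then $M/I$ is the zero monoid). If $u\in I$ then $\overline{u}=0$, which is not an atom, so assume $u\notin I$; then $\overline{u}$ is an order unit of $M/I$. The argument of Lemma~\ref{lem:piou}(ii), which uses only the presence of a properly infinite order unit, applied in $M/I$ with $\overline{e}$, yields $n\in\N$ with $n\overline{u}$ properly infinite. Now suppose $\overline{u}$ were an atom. By the observation recorded before Proposition~\ref{prop:wd}, $\langle\overline{u}\rangle$ would be an ideal isomorphic to $\Z^+$ via $\overline{u}\mapsto 1$. Since $n\overline{u}$ lies in this ideal, any witness $z$ to $2n\overline{u}\le n\overline{u}$ satisfies $z\le n\overline{u}$ and hence lies in $\langle\overline{u}\rangle$; transporting the relation $2n\overline{u}+z=n\overline{u}$ through the isomorphism gives $2n\le n$ in $\Z^+$, which is impossible for $n\ge 1$. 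This contradicts the proper infiniteness of $n\overline{u}$, so $\overline{u}$ is not an atom.

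As this holds for every quotient, $u$ is weakly divisible, and since $u$ was an arbitrary order unit, $M$ has weak divisibility for order units. The argument is essentially bookkeeping built on the preliminaries; the one point requiring care is the passage to quotients --- verifying that both ``order unit'' and ``properly infinite'' are inherited by $M/I$, and recognizing that an atom spans a copy of $\Z^+$ in which no nonzero element (hence no nonzero multiple of $\overline{u}$) can be properly infinite.
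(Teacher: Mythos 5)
Your proof is correct and takes essentially the same route as the paper's: both reduce to the characterization from \cite[Theorem 6.7]{AGPS} (weakly divisible if and only if not an atom in any quotient), both invoke Lemma~\ref{lem:piou}(ii) to produce a properly infinite multiple of the given order unit, and both reach a contradiction because the ideal $\langle\overline{u}\rangle\cong\Z^+$ generated by an atom contains no properly infinite element. The only cosmetic difference is that the paper applies Lemma~\ref{lem:piou}(ii) once in $M$ and then pushes the properly infinite multiple down to the relevant quotient, whereas you first verify that the properly infinite order unit descends to every proper quotient and apply the lemma there; the ingredients and logical skeleton are identical.
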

\begin{proof}
Let $u$ be a properly infinite order unit in $M$, and let $v$ be any
other order unit. By Lemma \ref{lem:piou}, there is $n\in\mathbb{N}$
such that $nv$ is properly infinite.

If $v$ is not weakly divisible, then, by \cite[Theorem 6.7]{AGPS},
$\overline{v}$ is an atom in some quotient $M/I$ of $M$ (so in
particular $v\notin I$). Since $\langle \overline{v}\rangle$ is
isomorphic to $\mathbb{Z}^+$, we see that $\overline v$ and
$n\overline v$ are necessarily finite. But this contradicts the fact
that $nv$ is properly infinite in $M$ and hence also in $M/I$.
\end{proof}

\begin{corol}
Let $A$ be a properly infinite C$^*$-algebra with real rank zero. Then $\V(A)$ has weak divisibility of order units.
\end{corol}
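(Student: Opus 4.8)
The plan is to deduce this directly from Proposition~\ref{prop:wdou}, applied to the monoid $M=\V(A)$. For this I need to check the two hypotheses of that proposition: that $\V(A)$ is a conical refinement monoid, and that it possesses a properly infinite order unit.

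The first hypothesis is essentially recorded in the preliminaries and costs no work. For any \Cs{} the monoid $\V(A)$ is conical, and the assumption that $A$ has real rank zero guarantees, by the result of Ara and Pardo (\cite{AP}) based on work of Zhang, that $\V(A)$ has refinement. Hence $\V(A)$ is a conical refinement monoid.

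It remains to produce a properly infinite order unit in $\V(A)$, and this is where the hypothesis that $A$ is properly infinite enters. The idea is to locate a full, properly infinite projection $p$ in (a matrix algebra over) $A$; then $[p]$ is an order unit of $\V(A)$, since fullness of $p$ is exactly the condition that $[p]$ be an order unit, and $[p]$ is properly infinite because $p\oplus p\precsim p$ translates into $2[p]\le [p]$. In the unital case one simply takes $p=1_A$, which is full and, by hypothesis, properly infinite. With such a $p$ at hand, Proposition~\ref{prop:wdou} immediately yields that $\V(A)$ has weak divisibility of order units.

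The only genuine point to verify is the existence of the full, properly infinite projection $p$ representing a properly infinite order unit. In the unital case this is immediate from the definition of a properly infinite \Cs, and in the general case it follows by unpacking that definition together with the abundance of projections supplied by real rank zero. I do not expect any serious obstacle here, since all the substantive monoid-theoretic work has already been carried out in Proposition~\ref{prop:wdou}; the corollary is a translation of that proposition into the language of \Cs s.
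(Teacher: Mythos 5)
Your reduction is precisely the paper's intended argument: the corollary is stated there without proof because it is immediate from Proposition~\ref{prop:wdou}, once one knows that $\V(A)$ is conical (true for every \Cs), that it has refinement (by Ara--Pardo, from real rank zero), and that it contains a properly infinite order unit. In the unital case your verification of this last point is complete and correct: $1_A$ is full and properly infinite, so $[1_A]$ is a properly infinite order unit, and Proposition~\ref{prop:wdou} applies.

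The weak spot is your closing claim that the non-unital case follows by ``unpacking the definition'' together with real rank zero. That step is not routine, and under the most natural non-unital reading of ``properly infinite'' the statement is actually false. If one calls a non-unital \Cs{} properly infinite when its strictly positive elements $a$ satisfy $a\oplus a\precsim a$ in the sense of Cuntz comparison, then $A=\cK$ qualifies: it is stable and of real rank zero, and a strictly positive element of $\cK$ has infinite rank, so its Cuntz class is the largest element of $\mathrm{Cu}(\cK)\cong\{0,1,2,\dots,\infty\}$ and is therefore properly infinite. Yet $\V(\cK)\cong\Z^+$ contains no properly infinite element at all, and it fails weak divisibility of order units, since $1=2a+3b$ has no solution in $\Z^+$. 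Thus no argument can extract a full properly infinite projection from that hypothesis. The corollary must be read with the standard convention that a properly infinite \Cs{} is unital with $1_A$ a properly infinite projection (or, at minimum, that $A$ contains a full properly infinite projection); with that reading your proof coincides with the paper's and is complete, but the non-unital extension you sketch should be deleted rather than ``expected to cause no obstacle.''
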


\section{Refinement of order units}

The property of refinement in an abelian monoid is preserved under the
passage to ideals, as is well known and easy to show. However,
refinement will often
be lost when considering submonoids of the original monoid. One of our aims
in this section is to show that, under the presence of weak
divisibility, the subsemigroup $M^*$ of order units of a refinement
monoid $M$ will also have the refinement property. From this, it
follows that $M^*\cup \{0\}$ is a \emph{simple} refinement
monoid. This will allow us to reduce the proof of some of our results
to the simple case. The lemma below is due to Ken Goodearl.

\begin{lem}\label{lemma_1_1}
Let $M$ be a conical refinement monoid and suppose that we are
given a refinement
\[
\begin{array}{c|cc}
& c_1& c_2 \\
\hline a & a_1& a_2 \\
 b & b_1& b_2
\end{array}
\]
in $M$, such that $c_1\ge c_2$. Then there exists a refinement
\[
\begin{array}{c|cc}
& c_1& c_2 \\
\hline a & a'_1& a'_2 \\
 b & b'_1& b'_2 \end{array}
\]
such that $a_1' \ge a_2'$. If $b_2$ is an order unit, the refinement above can be taken so that furthermore $b'_2$ is an order unit.
\end{lem}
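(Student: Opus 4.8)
The plan is to re-encode the given refinement as the system of identities $a=a_1+a_2$, $b=b_1+b_2$, $c_1=a_1+b_1$, $c_2=a_2+b_2$, and to manufacture new summands $a_1',a_2',b_1',b_2'$ obeying the same four identities but with $a_1'\ge a_2'$ (and, in the order-unit case, with $b_2'$ an order unit). The only hypothesis not yet used is $c_1\ge c_2$, so I would begin by writing $c_1=c_2+d$ for some $d\in M$. Substituting the two column identities yields the single equation $a_1+b_1=a_2+b_2+d$, and this is where the refinement property of $M$ enters.

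Next I would apply refinement to $a_1+b_1=a_2+b_2+d$ to obtain a $2\times 3$ refinement matrix
$$\begin{array}{c|ccc} & a_2 & b_2 & d \\ \hline a_1 & x_1 & x_2 & x_3 \\ b_1 & y_1 & y_2 & y_3 \end{array}$$
so that $a_1=x_1+x_2+x_3$, $b_1=y_1+y_2+y_3$, $a_2=x_1+y_1$, $b_2=x_2+y_2$ and $d=x_3+y_3$. The guiding idea is that $y_1$ is exactly the portion of $b_1$ that ``sits over'' $a_2$; transferring it from the $b$-row to the $a$-row should tip the balance in the first column toward the $a$-entry and enlarge the $b$-entry of the second column.

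Concretely, I would set $a_1'=a_1+y_1$, $a_2'=x_1$, $b_1'=y_2+y_3$ and $b_2'=b_2+y_1$. A direct check against the column sums above shows these form a refinement of the original square: $a_1'+a_2'=a$, $b_1'+b_2'=b$, $a_1'+b_1'=c_1$ and $a_2'+b_2'=c_2$. Moreover $a_2'=x_1\le x_1+x_2+x_3=a_1\le a_1+y_1=a_1'$, which gives the required inequality $a_1'\ge a_2'$.

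Finally, the order-unit strengthening should come essentially for free from this same choice: since $b_2'=b_2+y_1\ge b_2$, and any element dominating an order unit is again an order unit, $b_2'$ is an order unit whenever $b_2$ is. I expect the only real care to be the simultaneous bookkeeping verifying the four identities together with the inequality; the one genuinely nonroutine move is recognizing that the \emph{inequality} $c_1\ge c_2$ should be converted into the auxiliary \emph{equation} $a_1+b_1=a_2+b_2+d$ and refined, after which the redistribution $a_1'=a_1+y_1$ delivers both conclusions at once.
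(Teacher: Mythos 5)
Your proof is correct and follows essentially the same route as the paper: convert $c_1\ge c_2$ into the equation $a_1+b_1=a_2+(b_2+d)$, refine it, and transfer the piece of $b_1$ lying over $a_2$ (your $y_1$, the paper's $y_{21}$) from the $b$-row to the $a$-row, which simultaneously forces $a_1'\ge a_2'$ and $b_2'\ge b_2$. The only cosmetic difference is that you refine into a $2\times 3$ matrix where the paper keeps $b_2+d$ as a single column and uses a $2\times 2$ refinement; your extra splitting of $b_2+d$ into $x_2+y_2$ and $x_3+y_3$ is harmless (and standard) but unused.
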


\begin{proof}
We have $c_1=c_2+x$ for some $x$, whence $a_1+b_1= a_2+ (b_2+x)$.
The latter equation has a refinement
$$\begin{array}{c|cc}
 & a_2& b_2+x \\
\hline a_1 & y_{11}& y_{12} \\
 b_1 & y_{21}& y_{22} \end{array}\,.$$ Now set $a_1'= y_{11}+y_{12}+y_{21}$ and
$a_2'= y_{11}$, while $b_1'= y_{22}$ and $b_2'= b_2+y_{21}$. Then
\[
\begin{array}{c|cc}
  & c_1& c_2 \\
\hline a & a'_1& a'_2 \\
 b & b'_1& b'_2\end{array}
\]
is a refinement with $a_1' \ge a_2'$. Moreover $b'_2\geq b_2$, so that $b_2'$ is an order unit if $b_2$ is.
\end{proof}

Recall that $M^*$ stands for the subsemigroup of order units of a conical abelian monoid, and that $x\leq^* y$  means that $x+z=y$ for some $z$ in $M^*$.

\begin{lem}
\label{lem:dummy}
Let $M$ be a conical refinement monoid with weak divisibility of order units. Suppose we have a refinement
\[
\begin{array}{c|cc}
  & c& d \\
\hline a & x_{11}& x_{12} \\
 b & x_{21}& x_{22}\end{array}
\]
in $M$, where $x_{11}, x_{22}\in M^*$ (hence also $a,b,c,d\in M^*$). Then, there exists a refinement
\[
\begin{array}{c|cc}
  & c& d \\
\hline a & z_{11}& z_{12} \\
 b & z_{21}& z_{22}\end{array}
\]
with $z_{ij}$ in $M^*$ for all $i,j$.
\end{lem}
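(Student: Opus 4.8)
The plan is to leave the diagonal entries essentially in place and to repair only the off-diagonal entries $x_{12}$ and $x_{21}$, which are the sole entries that might fail to be order units (note $x_{11},x_{22}\in M^*$ force $a,b,c,d\in M^*$ since each dominates $x_{11}$ or $x_{22}$). The guiding idea is the monoid analogue of the ``rotation'' of a $2\times 2$ array with prescribed row and column sums: simultaneously enlarge the two off-diagonal entries and shrink the two diagonal entries by a common amount $w$. This operation keeps all four marginal sums $a,b,c,d$ unchanged, and if $w$ is an order unit the enlarged off-diagonal entries automatically become order units.

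Concretely, I would apply Corollary~\ref{cor:oubelow} to the two order units $x_{11}$ and $x_{22}$ to produce an order unit $w$ with $w\le^* x_{11}$ and $w\le^* x_{22}$. By the definition of $\le^*$ this means $x_{11}=w+z_{11}$ and $x_{22}=w+z_{22}$ for some order units $z_{11},z_{22}$. I then \emph{define} $z_{12}=x_{12}+w$ and $z_{21}=x_{21}+w$; since each dominates the order unit $w$, both are order units. It then remains only to verify that the four sums are preserved, and this is immediate from commutativity: $z_{11}+z_{12}=(z_{11}+w)+x_{12}=x_{11}+x_{12}=a$, $z_{21}+z_{22}=x_{21}+(w+z_{22})=x_{21}+x_{22}=b$, $z_{11}+z_{21}=(z_{11}+w)+x_{21}=x_{11}+x_{21}=c$, and $z_{12}+z_{22}=x_{12}+(w+z_{22})=x_{12}+x_{22}=d$. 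Hence the displayed array is a refinement of the same equation with every entry in $M^*$.

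The one genuine subtlety — and the precise reason the hypothesis of weak divisibility of order units is needed — is that a monoid has no subtraction, so one cannot naively ``remove'' a common order unit from $x_{11}$ and $x_{22}$ and expect the remainders to stay order units. This is exactly what the relation $\le^*$ encodes and what Corollary~\ref{cor:oubelow} delivers (it rests on Lemmas~\ref{lemma_3_1} and~\ref{lemma_3_2}, hence on weak divisibility): the complements $z_{11},z_{22}$ of the single chosen $w$ are again order units. Once such a $w$ is secured, no further appeal to the refinement property is required, since the construction is purely additive. I therefore expect essentially all of the difficulty to be concentrated in the single invocation of Corollary~\ref{cor:oubelow}, after which the bookkeeping of the four sums is routine.
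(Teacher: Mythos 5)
Your proof is correct and is essentially identical to the paper's own argument: the paper likewise invokes Corollary~\ref{cor:oubelow} to find an order unit $u$ with $u\leq^* x_{11},x_{22}$, writes $x_{11}=u+z_{11}$, $x_{22}=u+z_{22}$ with $z_{11},z_{22}\in M^*$, and sets $z_{12}=u+x_{12}$, $z_{21}=u+x_{21}$. Your verification of the four marginal sums is exactly the ``simple matter to check'' that the paper leaves to the reader.
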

\begin{proof}
By Corollary \ref{cor:oubelow}, there is an order unit $u$ such that $u\leq^* x_{11}, x_{22}$, so we may write $x_{11}=u+z_{11}$ and $x_{22}=u+z_{22}$, where both $z_{11}$ and $z_{22}$ belong to $M^*$. Now define $z_{12}=u+x_{12}$ and $z_{21}=u+x_{21}$. It is now a simple matter to check that $z_{ij}$ are order units that give the refinement of the conclusion.
\end{proof}

\begin{lem}
\label{lem:second}
Let $M$ be a conical refinement monoid with weak divisibility of order units. If we have
\[
a+b=c+d\,,
\]
where $a,b,c,d\in M^*$, there is then a refinement matrix
\[
\begin{array}{c|cc}
  & c& d \\
\hline a & z_{11}& z_{12} \\
 b & z_{21}& z_{22}\end{array}
\]
with $z_{11}\in M^*$.
\end{lem}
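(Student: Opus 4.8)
The plan is to reduce the whole problem to producing a refinement whose top row is \emph{sorted}, using the following elementary observation. Suppose we have managed to find a refinement
\[
\begin{array}{c|cc}
 & c & d \\ \hline a & z_{11} & z_{12} \\ b & z_{21} & z_{22}
\end{array}
\]
of $a+b=c+d$ with $z_{11}\ge z_{12}$. Then $z_{11}$ is automatically an order unit: since $a=z_{11}+z_{12}\le 2z_{11}$ and $a$ is an order unit, every $x\in M$ satisfies $x\le na\le 2nz_{11}$ for some $n$, and $z_{11}\neq 0$ because $a\neq 0$. Thus it suffices to exhibit \emph{any} refinement of $a+b=c+d$ whose two top-row entries are comparable.

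When the columns are comparable, say $c\ge d$, this is immediate: Lemma~\ref{lemma_1_1} applied to an arbitrary refinement of $a+b=c+d$ returns one with $z_{11}\ge z_{12}$, and the observation above concludes the argument; by the symmetry of refinement matrices the same works if $d\ge c$, or if $a,b$ are comparable. The entire difficulty is therefore concentrated in the case where neither $c,d$ nor $a,b$ are comparable, and this is exactly where weak divisibility must enter. In a cancellative monoid one could simply pick a common order unit $z_{11}\le a,c$ (which exists by Lemma~\ref{lemma_3_1}) and complete it to a refinement, but $M$ is not cancellative in general and such a completion need not exist; the point of weak divisibility is to supply enough order-unit room to get around this.

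To reach the comparable situation I would manufacture comparable columns. By Lemma~\ref{lemma_3_1} (or Corollary~\ref{cor:oubelow}) choose an order unit $u$ with $u\le c,d$ and write $c=u+c^{*}$; then $u\le d\le c^{*}+d$, so the auxiliary equation $a+b=(c^{*}+d)+u$ has comparable columns, and Lemma~\ref{lemma_1_1} produces an order-unit entry in its top row. One then re-merges the composite column $c^{*}+d$ back into the original columns $c=u+c^{*}$ and $d$, by refining $c^{*}+d$ against $c^{*}$ and $d$, converting the auxiliary refinement into a genuine refinement of $a+b=c+d$; throughout, every step is arranged as an honest equality, in the bookkeeping style of Lemma~\ref{lem:dummy}, so that no cancellation is invoked, with weak divisibility (via Lemma~\ref{lemma_3_2}) absorbing the pieces created by the re-merging. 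The main obstacle is precisely this last routing: the order-unit entry delivered by sorting sits under the composite column $c^{*}+d$, and the re-merging threatens to split it between the $c$- and $d$-columns, leaving neither corner an order unit. Guaranteeing that the order unit can instead be steered wholly into the $c$-column — without appealing to cancellation — is the crux, and I expect it to require either iterating the common-order-unit splitting or passing to a finer sorted refinement in the spirit of the comparison lemma underlying Lemma~\ref{lemma_3_1}.
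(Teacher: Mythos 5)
Your preliminary reductions are sound: a refinement whose top row satisfies $z_{11}\ge z_{12}$ automatically has $z_{11}\in M^*$, and when $c\ge d$ (or $a\ge b$) Lemma~\ref{lemma_1_1} finishes the proof. (Already here there is a small slip: if instead $d\ge c$ or $b\ge a$, sorting places the order unit at $z_{12}$, resp.\ $z_{21}$, not at $z_{11}$, and having a refinement of the \emph{fixed} equation with an order unit in one corner does not by symmetry yield one with an order unit in the required corner.) But the decisive point is the one you flag yourself: in the incomparable case your construction is an unresolved plan, and the obstruction is structural, not technical. Lemma~\ref{lemma_1_1} always places the large (hence order-unit) entry under the \emph{larger} column, and in your auxiliary equation $a+b=(c^*+d)+u$ the larger column is by design the merged one. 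So the order unit is born exactly where un-merging must split it, and nothing in your toolkit can steer an order unit under the surviving column $u$: sorting cannot favour the smaller column, and iterating the common-order-unit splitting, or refining further, reproduces the same geometry at every stage. This is a genuine gap.

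The paper closes it with two ideas absent from your sketch. First, it splits \emph{both} $a=x+y$ (with $y\le x$) and $c=u+v$ (with $v\le u$) by Lemma~\ref{lemma_3_2}, and merges the \emph{large} pieces into $b$ and $d$, working with the equation $y+(b+x)=v+(u+d)$. The point of this geometry is that the corner (row $y$, column $v$) lies on \emph{neither} merged line, so un-merging the row $b+x$ and then the column $u+d$ only ever adds summands to that corner ($z'_{11}=x_{11}+a_{11}$, then $z_{11}=z'_{11}+b_{11}$) and can never split it. Second, since sorting alone cannot make that corner an order unit (it favours the merged row and the merged column), the paper applies Lemma~\ref{lemma_1_1} twice --- once along the rows, once along the columns, using its final clause to retain the first gain --- producing order units in the two \emph{anti-diagonal} entries, and then invokes Lemma~\ref{lem:dummy} to upgrade \emph{all four} entries to order units, in particular the protected corner $x_{11}$. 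This is a second, essential use of weak divisibility; in your sketch Lemma~\ref{lem:dummy} appears only as ``bookkeeping'', but its redistribution of order units across corners is precisely the mechanism your approach is missing.
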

\begin{proof}
Use Lemma \ref{lemma_3_2} to split $a=x+y$ and $c=u+v$ where $u,v,x,y\in M^*$ and $v\leq u$, $y\leq x$. By Lemma \ref{lemma_1_1}, there is a refinement
\[
\begin{array}{c|cc}
  & v& u+d \\
\hline y & x_{11}& x_{12} \\
 b+x & x_{21}& x_{22}\end{array}
\]
with $x_{11}\leq x_{21}$, so in particular $x_{21}$ is an order unit. A second application of Lemma \ref{lemma_1_1} yields a refinement
\[
\begin{array}{c|cc}
  & v& u+d \\
\hline y & x'_{11}& x'_{12} \\
 b+x & x'_{21}& x'_{22}\end{array}
\]
where $x'_{21}\in M^*$ and $x'_{11}\leq x'_{12}$, so in particular $x'_{12}\in M^*$. We may thus apply Lemma \ref{lem:dummy} and assume at the outset that the elements $x_{ij}$ are all order units.

Refine the equality $b+x=x_{21}+x_{22}$ and get a refinement matrix
\[
\begin{array}{c|cc}
  & x_{12}& x_{22} \\
\hline x & a_{11}& a_{12} \\
 b & a_{21}& a_{22}\end{array}\,.
\]
Put $z'_{11}=x_{11}+a_{11}$, $z'_{12}=x_{12}+a_{12}$, so we obtain a refinement
\[
\begin{array}{c|cc}
  & v& u+d \\
\hline a & z'_{11}& z'_{12} \\
 b & a_{21}& a_{22}\end{array}\,,
\]
where $z'_{11}, z'_{12}\in M^*$. Proceeding in the same way for the
equality $u+d=z'_{12}+a_{22}$, we obtain the desired result.
\end{proof}

Of course, we can arrange, in Lemma \ref{lem:second}, any of the
entries of the final refinement to be an order unit. Arranging them
all to be order units \emph{simultaneously} is somewhat more delicate,
and we deal with this in the result below.

\begin{theor}
\label{thm:refou}
Let $M$ be a conical refinement monoid with weak divisibility of order units. If we have
\[
a+b=c+d\,,
\]
where $a,b,c,d\in M^*$, there is then a refinement matrix
\[
\begin{array}{c|cc}
  & c& d \\
\hline a & z_{11}& z_{12} \\
 b & z_{21}& z_{22}\end{array}
\]
with $z_{ij}\in M^*$ for all $i,j$. In particular, $M^*\cup\{0\}$ is a simple refinement monoid.
\end{theor}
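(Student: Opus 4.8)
The plan is to reduce everything to Lemma~\ref{lem:dummy}. That lemma upgrades \emph{all four} entries of a refinement to order units as soon as the two entries on one diagonal are order units; applying it after a harmless interchange of the two columns, the same conclusion holds if instead the two entries on the \emph{anti}-diagonal are order units. Thus it suffices to manufacture a single refinement of $a+b=c+d$ in which some pair of \emph{opposite} corners lies in $M^*$. One such corner is free: Lemma~\ref{lem:second} already provides a refinement with, say, $z_{11}\in M^*$, so the whole problem is to secure the opposite corner $z_{22}$ at the same time.

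First I would record a reassembly device that turns a good corner of a \emph{smaller} equation into the missing diagonal entry. Starting from a refinement with $z_{11}\in M^*$, cancel the top-left corner to obtain the complementary equation $b+z_{12}=d+z_{21}$, whose natural refinement carries the shared term $z_{22}$ in its $(b,d)$-corner. Suppose this complementary equation can be re-refined so that its $(b,d)$-corner $r$ is an order unit, say
\[
\begin{array}{c|cc}
 & d & z_{21} \\
\hline b & r & s \\
 z_{12} & t & e
\end{array}
\]
with $b=r+s$, $z_{12}=t+e$, $d=r+t$, $z_{21}=s+e$ and $r\in M^*$. Then the assignment $\hat z_{11}=z_{11}+e$, $\hat z_{12}=t$, $\hat z_{21}=s$, $\hat z_{22}=r$ is again a refinement of $a+b=c+d$: indeed $\hat z_{11}+\hat z_{12}=z_{11}+(e+t)=z_{11}+z_{12}=a$, $\hat z_{21}+\hat z_{22}=s+r=b$, $\hat z_{11}+\hat z_{21}=z_{11}+(s+e)=z_{11}+z_{21}=c$, and $\hat z_{12}+\hat z_{22}=t+r=d$. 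Here $\hat z_{22}=r\in M^*$, while $\hat z_{11}=z_{11}+e\geq z_{11}$ is still an order unit. So the diagonal now consists of order units, and Lemma~\ref{lem:dummy} finishes.

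The main obstacle is producing that order unit $r$ in the $(b,d)$-corner of the complementary equation. The only tool for forcing an order unit into a corner is Lemma~\ref{lem:second}, and it requires \emph{all four} margins of the equation to be order units; for $b+z_{12}=d+z_{21}$ only $b$ and $d$ are known to be order units, whereas $z_{12}$ and $z_{21}$ need not be. One cannot simply enlarge $z_{12},z_{21}$ to order units by adding a common order unit $w$: the extra $w$ then leaks into the identities above and wrecks the reassembly. The crux is therefore to first reach a refinement of $a+b=c+d$ with \emph{three} order-unit corners $z_{11},z_{12},z_{21}\in M^*$, for then the complementary margins are order units (and, in fact, the anti-diagonal pair $\{z_{12},z_{21}\}$ already triggers Lemma~\ref{lem:dummy}). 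I would obtain these by bootstrapping: after Lemma~\ref{lem:second} secures $z_{11}$, re-run its splitting-and-Goodearl machinery (Lemmas~\ref{lemma_3_2} and~\ref{lemma_1_1}) on the left column $c=z_{11}+z_{21}$ and then on the top row $a=z_{11}+z_{12}$, each promotion carried out through the reassembly device above, which only \emph{enlarges} the corner already in hand and so preserves it. Arranging these successive promotions so that no previously gained corner is lost is the delicate heart of the argument.

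Finally, the ``in particular'' clause. For simplicity of $M^*\cup\{0\}$, let $x\in M^*$ and $y\in M^*\cup\{0\}$; since $x$ is an order unit of $M$ there is $n$ with $2y\leq nx$, say $nx=2y+z$, and then $nx=y+(y+z)$ with $y+z\geq y$ an order unit, so $y\leq^* nx$ inside $M^*\cup\{0\}$. Hence every non-zero element of $M^*\cup\{0\}$ is an order unit, i.e.\ $M^*\cup\{0\}$ is simple. For refinement, an equation $a+b=c+d$ in $M^*\cup\{0\}$ with all four terms non-zero is refined with entries in $M^*$ by the main statement; if some term is $0$ the obvious trivial refinement already has all entries in $M^*\cup\{0\}$. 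Thus $M^*\cup\{0\}$ is a simple refinement monoid.
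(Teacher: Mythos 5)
Your reduction is sound as far as it goes: Lemma~\ref{lem:dummy} (also in its anti-diagonal form, after swapping columns) does reduce the theorem to producing a refinement with two \emph{opposite} corners in $M^*$, your reassembly device is verified correctly, and the ``in particular'' clause is fine. The genuine gap is exactly the step you defer to as ``the delicate heart of the argument'': the bootstrap to a second (or third) order-unit corner is never carried out, and the machinery you propose for it is circular. Your only tool for forcing an order unit into a corner of an equation is Lemma~\ref{lem:second}, which requires \emph{all four margins} of that equation to be order units. Now list the complementary equations: to gain $z_{22}$ you must refine $b+z_{12}=d+z_{21}$, whose margins include $z_{12}$ and $z_{21}$; to gain $z_{21}$ you must refine $b+z_{11}=c+z_{22}$, and to gain $z_{12}$ you must refine $a+z_{22}=d+z_{11}$, whose margins include $z_{22}$. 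So, starting from the single corner $z_{11}\in M^*$ supplied by Lemma~\ref{lem:second}, \emph{every} available promotion already requires that $\{z_{11},z_{22}\}$ or $\{z_{12},z_{21}\}$ lie in $M^*$ --- precisely the configuration you are trying to reach, and one that would finish the proof on the spot via Lemma~\ref{lem:dummy}. (A secondary flaw: the device preserves only the corner \emph{opposite} to the one being gained; e.g.\ gaining $z_{21}$ through the equation $b+z_{11}=c+z_{22}$ enlarges $z_{12}$ but replaces $z_{11}$ by a smaller piece, so successive promotions around the square also lose previously won adjacent corners.)

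The idea you are missing --- and which the paper uses --- is to change the \emph{equation} rather than promote corners of the original one. Write $c=u+v$ with $u,v\in M^*$ and $v\le u$ (Lemma~\ref{lemma_3_2}), and work with the modified equation $a+b=v+(u+d)$: the machinery of Lemma~\ref{lem:second}, one further application of Goodearl's Lemma~\ref{lemma_1_1} (using $v\le u+d$), and Lemma~\ref{lem:dummy} yield a refinement $\{t_{ij}\}$ of \emph{this} equation with all four entries in $M^*$. The payoff is that the splitting equation $t_{12}+t_{22}=u+d$ now has all four margins in $M^*$, so Lemma~\ref{lem:second} legitimately applies to it, giving a refinement $\{w_{ij}\}$ with $w_{22}\in M^*$; reassembling exactly in the spirit of your device, namely $z_{11}=t_{11}+w_{11}$, $z_{12}=w_{12}$, $z_{21}=t_{21}+w_{21}$, $z_{22}=w_{22}$, produces a refinement of the original equation whose diagonal lies in $M^*$, and Lemma~\ref{lem:dummy} finishes. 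In short: borrow an order unit from $c$ to enlarge $d$, arrange \emph{all} entries to be order units for the enlarged equation, and only then split $u+d$ back. This is what breaks the circularity your outline runs into.
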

\begin{proof}
Retain notation and procedure in the proof of the previous lemma up to the refinement
\[
\begin{array}{c|cc}
  & v& u+d \\
\hline a & z'_{11}& z'_{12} \\
 b & a_{21}& a_{22}\end{array}\,,
\]
where $z'_{11}$ and $z'_{12}$ are both order units.

Since $v\leq u$, we may apply Lemma \ref{lemma_1_1} to obtain a refinement matrix
\[
\begin{array}{c|cc}
  & v& u+d \\
\hline a & t_{11}& t_{12} \\
 b & t_{21}& t_{22}\end{array}\,,
\]
where $t_{11}\in M^*$ and $t_{21}\leq t_{22}$, so in particular also $t_{22}\in M^*$. Applying Lemma \ref{lem:dummy} if necessary, we may assume then that all $t_{ij}$'s are order units, by probably loosing the inequality $t_{21}\leq t_{22}$.

Next, refine $u+d=t_{12}+t_{22}$ applying Lemma \ref{lem:second} to obtain
\[
\begin{array}{c|cc}
  & u& d \\
\hline  t_{12}& w_{11}& w_{12} \\
 t_{22} & w_{21}& w_{22}\end{array}\,,
\]
with $w_{22}\in M^*$. Finally, put $z_{11}=t_{11}+w_{11}$, $z_{12}=w_{12}$, $z_{21}=t_{21}+w_{21}$ and $z_{22}=w_{22}$, where at least $z_{11}$ and $z_{22}$ are order units. A final application of Lemma \ref{lem:dummy} yields the desired result.
\end{proof}
\begin{rema}
\label{rem:simple}
In the simple atomless case, our result above says that a non-zero refinement problem $a+b=c+d$ (with $a,b,c,d$ being non-zero) admits a non-zero refinement matrix (i.e., with all entries being non-zero), a result which is well-known and much easier to prove.
\end{rema}

\begin{corol}
\label{cor:wehrung}
Let $M$ be a conical refinement monoid with weak divisibility of order units. If $a,b,c\in M^*$, $n\geq 1$, and $a+b=nc$, there are order units $u_1,\ldots, u_n$, $v_1,\ldots, v_n$ and $z_1,\ldots, z_{n-1}$ such that $u_i=u_{i+1}+z_i$ and
$v_i+z_i=v_{i+1}$ for every $i=1,\ldots,n-1$, and such that
\[
\begin{array}{ c|cccc}
  & u & u &\cdots & u \\
\hline  nu & u_1 & u_2 & \cdots & u_n \\
 v & v_1 & v_2 & \cdots & v_n \end{array}\,,
\]
is a refinement matrix.
\end{corol}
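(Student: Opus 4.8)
The plan is to pass to the simple monoid of order units and then quote the monotone (``staircase'') refinement already invoked in Lemma~\ref{lemma_3_1}. By Theorem~\ref{thm:refou} the submonoid $M^*\cup\{0\}$ is a simple refinement monoid, and by Lemma~\ref{lemma_3_2} each of its nonzero elements is a sum of two order units, so $M^*\cup\{0\}$ has no atoms and is therefore weakly divisible by Corollary~\ref{lem:simple}. The crucial gain from this reduction is that inside $M^*\cup\{0\}$ the order units of $M$ are exactly the nonzero elements; hence any refinement of $a+b=nc$ performed within $M^*\cup\{0\}$ has every entry equal to $0$ or to an order unit of $M$, and what remains is to produce such a refinement with no zero entry.

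First I would rewrite $a+b=nc$ as $a+b=\underbrace{c+\cdots+c}_{n}$ inside $M^*\cup\{0\}$ and apply \cite[Lemma 1.9]{WE}, exactly as in Lemma~\ref{lemma_3_1}. This yields entries with $u_1\ge u_2\ge\cdots\ge u_n$ and $v_1\le v_2\le\cdots\le v_n$, each column summing to $c$ (so $u_i+v_i=c$), the first row summing to $a$ and the second to $b$. The same lemma (or a further application of Lemma~\ref{lemma_1_1} to the equalities $c=u_i+v_i=u_{i+1}+v_{i+1}$, using $u_i\ge u_{i+1}$ to clear one off-diagonal term) provides the links: one extracts $z_i$ with $u_i=u_{i+1}+z_i$, and then $v_{i+1}=v_i+z_i$ follows by comparing the two decompositions of $c$. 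This already delivers the staircase required by the statement, with all of $u_i$, $v_i$ and $z_i$ lying in $M^*\cup\{0\}$.

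The hard part is to force every entry to be nonzero, equivalently a genuine order unit of $M$; note that this must also include the increments, i.e.\ the chain must be \emph{strictly} decreasing in the first row so that each $z_i\ne0$. Monotonicity confines the possible zeros to a terminal block of the $u_i$ and an initial block of the $v_i$ (one never has $u_i=v_i=0$, as $u_i+v_i=c\ne0$), while $u_1$ and $v_n$ are automatically order units because $a\le nu_1$ and $b\le nv_n$. To eliminate the remaining zeros I would exploit the weak divisibility of $M^*\cup\{0\}$: split a small order unit off $c$ and transfer equal amounts between the two rows in paired columns, so as to keep each column sum equal to $c$ and the two row sums equal to $a$ and $b$, while turning the offending entries into order units and separating consecutive values of the chain. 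This redistribution, carried out so that the monotone shape survives, is the $2\times n$ analogue of the nonzero refinement recorded in Remark~\ref{rem:simple}; it is the only genuinely delicate step, and it is exactly where the atomlessness of $M^*\cup\{0\}$ is needed. Once no entry vanishes, every $u_i$, $v_i$, $z_i$ is an order unit of $M$ and the displayed refinement matrix is as claimed.
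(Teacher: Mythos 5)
Your reduction to the simple refinement monoid $M^*\cup\{0\}$ via Theorem~\ref{thm:refou}, together with the observation that its nonzero elements are exactly the order units of $M$, is precisely the paper's first move, and your structural remarks (zeros confined to a terminal block of the $u_i$ and an initial block of the $v_i$, with $u_1$ and $v_n$ automatically order units) are correct. The genuine gap is at the point you yourself flag as ``the only genuinely delicate step'': nothing is actually proved there. You apply \cite[Lemma 1.9]{WE} as a black box, obtaining a staircase with possibly zero entries, and then propose to repair it by transferring equal amounts between the two rows in paired columns. Such a transfer must simultaneously preserve the column sums, the two row sums, \emph{and} the linked relations $u_i=u_{i+1}+z_i$, $v_i+z_i=v_{i+1}$ with all increments $z_i$ order units. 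In a non-cancellative monoid these linked relations do not survive local modification of the entries: after replacing $u_i$ and $v_i$ by transferred values, the increments must be constructed afresh, and your sketch supplies no mechanism for this. Indeed, your parenthetical claim that ``$v_{i+1}=v_i+z_i$ follows by comparing the two decompositions of $c$'' is exactly a cancellation argument ($u_{i+1}+z_i+v_i=u_{i+1}+v_{i+1}$ does not imply $z_i+v_i=v_{i+1}$), and cancellation fails badly in the monoids at issue --- the whole paper concerns properly infinite elements, where $x=x+y$ with $y\neq 0$ occurs. So the repair step is not merely unfinished; the tools proposed for it run into this obstruction.

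The paper avoids the repair problem altogether: it never produces a degenerate staircase in the first place. It invokes \cite[Lemma 1.9]{WE} \emph{together with its proof}, run inside $M^*\cup\{0\}$, where --- and this is the real content of Theorem~\ref{thm:refou} --- every refinement of an equation between order units can be taken with all entries order units. Since the elements produced in Wehrung's construction (including the increments $z_i$) arise from such refinement steps, the whole staircase consists of order units from the outset, and no post-hoc redistribution is needed. If you want a self-contained argument, the work to be done is to redo Wehrung's induction using the order-unit refinements of Theorem~\ref{thm:refou} (and Lemma~\ref{lemma_1_1} to maintain the monotone, linked form), rather than to patch a degenerate output afterwards.
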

\begin{proof}
By Theorem \ref{thm:refou}, $M^*\cup\{0\}$ is a simple refinement monoid, whence Lemma 1.9 in \cite{WE} and its proof can be applied to obtain a refinement in $M^*$ (that is, in $M^*\cup\{0\}$ with all elements being non-zero) such as the the one in our statement.
\end{proof}

We now draw another consequence of Theorem \ref{thm:refou}.
\begin{defis}
\label{def:comparison}
{\rm We remind the reader that a monoid $M$ is \emph{almost unperforated} if whenever $(k+1)x\leq ky$ for $k\in\mathbb{N}$, it follows that $x\le y$. More generally, $M$ has $n$-comparison
if whenever $x,y_0,y_2, \dots,y_n$ are elements in $M$ such that $x
<_s y_j$ for all $j=0,1,\dots,n$, then $x \le y_0+y_1+ \dots
+y_n$. Here $x <_s y$ means that $(k+1)x \le ky$ for some natural
number $k$. It follows immediately from the definitions that $M$ is
almost unperforated if and only if $M$ has $0$-comparison. Notice that if $M$ has $n$-comparison, then $M$ has $m$-comparison for any $m\geq n$

Recall also that a monoid $M$ is termed \emph{strictly unperforated} if, whenever $nx+z=ny$ for $x,y,z\in M$ with $z\neq 0$ and $n\in \mathbb{N}$, there is a non-zero element $w$ such that $x+w=y$.}
\end{defis}

That these properties are equivalent for conical simple refinement monoids is quite possibly well-known. We state the result and outline the proof for the sake of completeness.

\begin{prop}
\label{lem:equiv}
Let $M$ be a simple conical refinement monoid with order unit $u$. Then, the following conditions are equivalent:
\begin{enumerate}
\item $M$ is almost unperforated.
\item $M$ has $n$-comparison for any $n$.
\item $M$ has $n$-comparison for some $n$
\item $M$ is strictly unperforated.
\item For every non-zero element $x$ in $M$ such that $x\leq u$, there exists $k\in\mathbb{N}$ such that, if $y\in M$ and $ky\leq u$, then also $y\leq x$.
\end{enumerate}
\end{prop}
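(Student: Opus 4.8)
The plan is to prove the equivalences as one cycle (i) $\Rightarrow$ (ii) $\Rightarrow$ (iii) $\Rightarrow$ (v) $\Rightarrow$ (i), together with the side equivalence (i) $\Leftrightarrow$ (iv). Several links are immediate. Since almost unperforation is precisely $0$-comparison, the observation noted in Definitions~\ref{def:comparison} that $n$-comparison implies $m$-comparison for $m\ge n$ gives (i) $\Rightarrow$ (ii), and (ii) $\Rightarrow$ (iii) is trivial. For (iv) $\Rightarrow$ (i), given $(k+1)x\le ky$ with $x\neq 0$ I would write $(k+1)x+r=ky$ as $kx+(x+r)=ky$ with $x+r\neq 0$ and apply strict unperforation to produce $w\neq0$ with $x+w=y$, so $x\le y$.

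For (i) $\Rightarrow$ (iv), start from $Nx+z=Ny$ with $z\neq0$; by simplicity $z$ is an order unit, so $x\le pz$ for some $p$, and multiplying by $p$ and absorbing one copy of $x$ gives $(Np+1)x\le Npy$, that is $x<_s y$, whence almost unperforation yields $x\le y$. The delicate point is to arrange a \emph{non-zero} complement $w$ with $x+w=y$. If every such $w$ were $0$ then $x=y$ and $Nx+z=Nx$, so $Nx$ is infinite, hence properly infinite by simplicity. For $N\ge 2$ I would then use $2Nx\le Nx$ and $2x\le Nx$ to get $(2N+2)x=2Nx+2x\le 3Nx\le Nx$, i.e. $(N+1)(2x)\le N\cdot x$; almost unperforation applied to this forces $2x\le x$, so $x$ itself is infinite and a non-zero $w$ with $x+w=x=y$ exists after all (the case $N=1$ being immediate). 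This settles (i) $\Leftrightarrow$ (iv).

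The heart of the positive argument is (iii) $\Rightarrow$ (v), where I would split into cases according to Corollary~\ref{lem:simple}. If $M\cong\Z^+$ all five conditions hold by direct inspection. Otherwise $M$ is atomless and therefore weakly divisible, so iterating Lemma~\ref{lemma_3_2} lets me write a given non-zero $x\le u$ as a sum of order units $x=x_0+\cdots+x_n$, one summand for each of the $n+1$ indices in the $n$-comparison hypothesis. Each $x_j$ is an order unit, so $u\le Px_j$ for a common $P$; taking $k=P+1$, any $y$ with $ky\le u$ satisfies $(P+1)y\le u\le Px_j$, i.e. $y<_s x_j$ for every $j$. Now $n$-comparison gives $y\le x_0+\cdots+x_n=x$, which is exactly (v). The mechanism is that weak divisibility converts the single target $x$ into the $n+1$ summands that $n$-comparison demands.

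The main obstacle will be closing the cycle with (v) $\Rightarrow$ (i): upgrading the uniform comparison of ``small'' elements (those fitting $k$ times inside $u$) to genuine almost unperforation $a<_s b\Rightarrow a\le b$ for arbitrary order units. The trouble is that order units do not fit inside $u$, so (v) does not apply to $a,b$ directly, and one cannot simply boost the witness of $a<_s b$, since the naive induction on the exponent circularly requires $a\le b$. My plan is once more to invoke weak divisibility (the case $M\cong\Z^+$ being trivial) to cut both $a$ and $b$ into order-unit pieces lying below $u$, and then to reassemble the comparison by a refinement (Hall-type matching) argument; this transfer of a below-$u$ comparison to the whole monoid is where the real work lies. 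Once (v) $\Rightarrow$ (i) is established the cycle closes, yielding the equivalence of (i)--(iii) and (v), and combined with (i) $\Leftrightarrow$ (iv) this gives all five.
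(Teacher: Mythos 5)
Most of your individual links are sound, and two of them coincide with the paper's proof: the trivial chain (i) $\Rightarrow$ (ii) $\Rightarrow$ (iii), and your proof of (iii) $\Rightarrow$ (v) --- the split into the atomic case $M \cong \Z^+$ versus the atomless case via Corollary~\ref{lem:simple}, followed by writing $x = x_0 + \cdots + x_n$ as a sum of order units and feeding $(k'+1)y \le u \le k'x_j$ into $n$-comparison --- is exactly the paper's argument. Your direct proof of (i) $\Rightarrow$ (iv) is also correct and does not appear in the paper: the step $x <_s y$ is right, and your treatment of the degenerate case $x = y$ (deduce that $Nx$ is infinite, hence properly infinite by simplicity; derive $(N+1)(2x) \le Nx$; apply almost unperforation to get $2x \le x$; use conicality to extract a non-zero complement) is valid.

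The genuine gap is (v) $\Rightarrow$ (i), which you yourself flag as ``where the real work lies'' and for which you offer only a plan: cut $a$ and $b$ into order-unit pieces and ``reassemble the comparison by a refinement (Hall-type matching) argument.'' That is not a proof, and it is precisely the one non-trivial return implication: without it, what you have established is (i) $\Leftrightarrow$ (iv) together with the one-way chain (i) $\Rightarrow$ (ii) $\Rightarrow$ (iii) $\Rightarrow$ (v), and nothing brings (v) --- or even (ii) or (iii) --- back to (i), so the five conditions are not shown equivalent. The paper closes this loop by a different route: it never proves (v) $\Rightarrow$ (i) directly, but instead invokes \cite[Lemma 3.7]{perint} for the equivalence (iv) $\Leftrightarrow$ (v) (strict unperforation versus weak comparability in a simple refinement monoid), and then uses its own (iv) $\Rightarrow$ (i). That cited equivalence is a substantive result whose proof uses refinement in an essential way; you would need either to reproduce such an argument or to cite it. As written, your sketch does not indicate how a matching argument would upgrade the hypothesis (v), which only compares elements sitting below $u$, to a comparison $a \le b$ between arbitrary order units --- the known arguments pass through strict unperforation (iv), not through (i), and require more than cutting and matching.
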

\begin{proof}
If $M$ is atomic, then it is isomorphic to the infinite cyclic monoid,
by \cite[Lemma 1.6]{AP}, and all five conditions are easily seen to
hold in that case. We may thus assume that $M$ is non-atomic, and in
particular it will be atomless. This implies that $M$ is weakly
divisible (see Corollary~\ref{lem:simple}), so it has weak
divisibility for order units.

It is clear that (i) $\Rightarrow $ (ii) $\Rightarrow$ (iii), and also
that (iv) $\Rightarrow$ (i). That (iv) and (v) are equivalent follows
from \cite[Lemma 3.7]{perint}.

Let us check, finally, that (iii) $\Rightarrow$ (v). Assume $M$ has
$n$-comparison for some $n$. Let $x\in M$ be non-zero and assume
$x\leq u$. Since $M$ is weakly divisible, we may write $x=x_0+\cdots
+x_n$ with  all $x_i$ non-zero (hence all order units). There is
$k'\in\mathbb{N}$ such that $u\leq k'x_i$ for all $i$. Now let
$k=k'+1$. If $y\in M$ and $ky\leq u$, then $(k'+1)y\leq k'x_i$ for all
$i$, whence $n$-comparison implies that $y\leq x_0+\cdots+x_n=x$.
\end{proof}

Condition (v) above was termed \emph{weak comparability} (see \cite{AP}).

\begin{lem}{\rm (cf. \cite[Lemma 5.1 (a)]{PA})}\label{lemma_3_3}
Let $M$ be a conical refinement monoid with weak divisibility of
order units. Given order units $x_1,\ldots,x_k\in M$ and $n\in
\mathbb{N}$, there exists an order unit $y\in M$ such that $ny\leq^*
x_i$ for all $i$.
\end{lem}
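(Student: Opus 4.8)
The plan is to reduce to the case of a single order unit and then manufacture $n$ equal ``slices'' leaving an order-unit remainder. First I would apply Lemma~\ref{lemma_3_1} iteratively: starting from the order units $x_1,\dots,x_k$, take a common lower bound $w_2\le x_1,x_2$, then $w_3\le w_2,x_3$, and so on, arriving at an order unit $x$ with $x\le x_i$ for all $i$. It then suffices to produce an order unit $y$ with $ny\le^* x$. Indeed, if $ny+z=x$ with $z\in M^*$ and $x+s_i=x_i$, then $ny+(z+s_i)=x_i$, and $z+s_i\ge z$ is again an order unit; hence $ny\le^* x_i$ for every $i$.

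To treat the single order unit $x$, I would first split it into $n+1$ order-unit pieces. Applying Lemma~\ref{lemma_3_2} repeatedly (split $x$ into two order units, split one of them again, and continue for $n$ steps), I obtain $x=x_0+x_1+\cdots+x_n$ with each $x_j\in M^*$. Using Lemma~\ref{lemma_3_1} iteratively once more, I choose an order unit $y$ with $y\le x_j$ for all $j=0,\dots,n$, and write $x_j=y+r_j$. Assembling these identities gives
\[
x=\sum_{j=0}^{n}x_j=\sum_{j=0}^{n}(y+r_j)=(n+1)y+\sum_{j=0}^{n}r_j=ny+z,\qquad z:=y+\sum_{j=0}^{n}r_j.
\]
Since $z\ge y$ and $y$ is an order unit, $z$ is an order unit as well: any $a\in M$ satisfies $a\le my\le mz$ for a suitable $m$, and $z\neq 0$ by conicality. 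Thus $ny\le^* x$, which, combined with the first paragraph, yields $ny\le^* x_i$ for all $i$.

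The only genuinely delicate point — and the single idea in the argument — is the use of $n+1$ slices rather than $n$. With only $n$ slices the identity would read $x=ny+\sum_{j} r_j$, and the remainder $\sum_j r_j$ might vanish (for instance if $y=x_j$ for every $j$), giving merely $ny\le x$ instead of the strict relation $ny\le^* x$ that the conclusion requires. Taking one extra slice guarantees that a full copy of $y$ survives inside the remainder $z$, which forces $z$ to be a nonzero order unit. Everything else is routine bookkeeping built on the already-established Lemmas~\ref{lemma_3_1} and~\ref{lemma_3_2}; one could alternatively phrase the whole argument inside the simple refinement monoid $M^*\cup\{0\}$ furnished by Theorem~\ref{thm:refou}, but the direct version above avoids having to re-examine the relation $\le^*$ after that passage.
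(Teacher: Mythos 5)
Your proof is correct, but it takes a genuinely different route from the paper's. The paper disposes of this lemma in two lines: by Theorem~\ref{thm:refou}, $M^*\cup\{0\}$ is a simple refinement monoid, and then the statement is exactly condition (a) of Lemma~5.1 of \cite{PA} applied to that simple monoid. You instead give a direct, self-contained argument inside $M$ using only the elementary Lemmas~\ref{lemma_3_1} and~\ref{lemma_3_2}: iterate Lemma~\ref{lemma_3_1} to reduce to a single order unit $x$, split $x$ into $n+1$ order-unit pieces via Lemma~\ref{lemma_3_2}, take a common order-unit lower bound $y$ of the pieces, and observe that the leftover $z=y+\sum_j r_j$ dominates $y$ and is therefore itself an order unit --- your point about needing $n+1$ rather than $n$ slices is exactly the right one, and the reduction step (that $z+s_i$ remains an order unit) is sound. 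What each approach buys: the paper's proof is economical and fits the section's guiding philosophy of pushing everything into the simple monoid $M^*\cup\{0\}$, but it leans on the comparatively heavy Theorem~\ref{thm:refou} and on an external result of Pardo; yours avoids both, keeps the whole argument visible, and sidesteps the (small but real) bookkeeping of checking that the relation $\le^*$ in $M$ corresponds to ``adding a nonzero element'' in $M^*\cup\{0\}$. One cosmetic flaw: in your second paragraph the symbols $x_1,\dots,x_n$ are recycled for the pieces of $x$, clashing with the $x_i$ of the statement that reappear in your final sentence; rename the pieces to avoid confusion.
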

\begin{proof}
This follows applying Theorem \ref{thm:refou}, which implies that $M^*\cup\{0\}$ is a simple refinement monoid, and then using condition (a) of Lemma 5.1 in \cite{PA}.
\end{proof}

\begin{corol}
\label{cor:cancellation}
Let $M$ be a conical refinement monoid with weak divisibility of order units. If $M^*\cup\{0\}$ is strictly unperforated, then $M^*$ is a cancellative monoid. This holds in particular if $M$ is strictly unperforated.
\end{corol}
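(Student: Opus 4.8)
The plan is to reduce the whole statement to the simple monoid $N:=M^*\cup\{0\}$ and then deduce cancellation from \emph{separativity}. By Theorem~\ref{thm:refou}, $N$ is a simple conical refinement monoid, in which every non-zero element is an order unit; by hypothesis it is strictly unperforated. Fixing any order unit of $N$ and applying Proposition~\ref{lem:equiv}, I get that $N$ is almost unperforated and, equivalently, satisfies weak comparability (condition (v)). Note that the assertion that $M^*$ is cancellative is exactly the statement that $a+c=b+c$ forces $a=b$ for all non-zero $a,b,c\in N$.

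First I would observe that in the simple monoid $N$ any non-zero $c$ satisfies $c\le na$ and $c\le mb$ for suitable $n,m$, since $a$ and $b$ are order units; that is, $c\propto a$ and $c\propto b$. Thus the cancellation we want is precisely the conclusion of the standard characterization of separativity for refinement monoids (namely, a refinement monoid is separative if and only if $a+c=b+c$ together with $c\propto a$ and $c\propto b$ implies $a=b$). It therefore suffices to prove that $N$ is separative. I expect this to be the crux of the argument, and I would obtain it from the implication \emph{weak comparability $\Rightarrow$ separativity} for refinement monoids, which is exactly the theme of Ara and Pardo's \cite{AP}. This disposes of both regimes at once: if $N$ is stably finite it is the positive cone of a simple dimension group, while if $N$ has an infinite element it is simple and purely infinite, so that $M^*$ is a group; separativity delivers cancellation of non-zero elements in either case.

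For the final ``in particular'' clause I would show that strict unperforation passes from $M$ to $N$. Given $nx+z=ny$ in $N$ with $z\neq0$, the element $z$ is an order unit, and strict unperforation of $M$ yields a non-zero $w\in M$ with $x+w=y$; the task is then to replace $w$ by an order unit. Here I would split into the two cases above: if $N$ is purely infinite then $x\le y$ already holds in $N$ automatically (once $y$ is properly infinite, $x\le my\le y$), so a non-zero witness exists; if $N$ is stably finite, I would promote $w$ to an order unit by a short refinement argument, using crucially that the surplus $z$ is an order unit to force the comparison to be ``full''. This descent, rather than the passage $M\to M^*$ being formally automatic, is the one genuinely technical (though routine) point, and I would write it out carefully; the conceptual heart of the corollary remains the separativity theorem quoted above.
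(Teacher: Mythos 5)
The first half of your argument is essentially the paper's own proof of the main assertion: reduce to $N=M^*\cup\{0\}$ via Theorem~\ref{thm:refou}, convert strict unperforation of $N$ into weak comparability via Proposition~\ref{lem:equiv}, and invoke Ara--Pardo \cite{AP} (the paper cites \cite[Theorem 1.7]{AP} directly for cancellativity of $M^*$; routing through separativity plus simplicity, as you do, is the same thing).

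The gap is in the ``in particular'' clause, i.e., in showing that strict unperforation descends from $M$ to $N$. Two problems. First, your case split ``$N$ purely infinite / $N$ stably finite'' is circular as stated: that dichotomy was set up in your first paragraph using weak comparability of $N$, which you obtained from the hypothesis that $N$ is strictly unperforated --- exactly what you are trying to prove at this point. (The dichotomy is genuinely false for general simple conical refinement monoids: Proposition~\ref{exem_1} exhibits one with a finite order unit $u$ such that $2u$ is properly infinite. It can be re-derived from strict unperforation of $M$ alone, but that requires an argument you do not give.) Second, and more seriously, the stably finite case --- which is the actual content --- is only a promise: ``promote $w$ to an order unit by a short refinement argument.'' It is unclear how to upgrade the witness \emph{after} the fact: from $x+w=y$ and $nx+z=ny$ one cannot, absent cancellation, relate $nw$ to $z$, so the fullness of $z$ does not obviously transfer to $w$. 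The paper sidesteps this entirely by padding \emph{before} applying strict unperforation: by Lemma~\ref{lemma_3_3} pick an order unit $w$ with $nw\leq^* z$, so that $n(x+w)+z'=ny$ with $z'\neq 0$; strict unperforation of $M$ then gives $(x+w)+w'=y$ with $w'\neq 0$, and the witness $w+w'$ for $x$ is automatically an order unit because $w$ is. That order-of-operations trick is the missing idea; with it the descent is a three-line argument needing no dichotomy and no case analysis (the case $x=0$ being trivial, as $y\neq 0$).
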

\begin{proof}
By Theorem \ref{thm:refou}, we have that $M^*\cup\{0\}$ is a simple, conical, refinement monoid. We may then use Proposition \ref{lem:equiv}, together with \cite[Theorem 1.7]{AP}, to conclude that $M^*$ is cancellative.

Let us now check that if $M$ is strictly unperforated, then so is $M^*\cup\{0\}$. Suppose that $nx+z=ny$, for $x,y\in M^*\cup\{0\}$ and $z\in M^*$ (so clearly $y\neq 0$). If $x=0$, then $0+y=y$.

If $x\in M^*$ , then using Lemma \ref{lemma_3_3}, find an order unit $w$ such that $nw\leq^* z$. This implies then that $n(x+w)+z'=ny$, for some non-zero element $z'$, hence strict unperforation in $M$ implies $x+w+w'=y$, for some non-zero element $w'$. Since $w+w'$ is an order unit, we see that $M^*\cup\{0\}$ is strictly unperforated.
\end{proof}

\begin{corol} Let $A$ be a C$^*$-algebra with real rank zero. If $\V(A)$ has weak divisibility for order units and is strictly unperforated, then the subsemigroup of equivalence classes of full projections is cancellative.
\end{corol}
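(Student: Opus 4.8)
The plan is to read this statement as a direct translation of Corollary~\ref{cor:cancellation} into the operator-algebraic language, so that the only genuine content is the dictionary between projections in $A$ and elements of $\V(A)$. First I would set $M=\V(A)$ and recall from the preliminaries that, because $A$ has real rank zero, the theorem of Ara and Pardo (building on Zhang) ensures that $M$ is a conical refinement monoid. This is exactly the standing hypothesis under which every result of Sections~2--4 has been proved, so once it is in place the abstract machinery is available.

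Next I would pin down the subsemigroup under consideration. As recorded in the preliminaries, a projection $p$ in $M_\infty(A)$ is full precisely when its class $[p]$ is an order unit of $\V(A)$; hence the subsemigroup of equivalence classes of full projections coincides with $M^*$, the subsemigroup of order units of $M$. Since the monoid operation on classes is $[p]+[q]=[p\oplus q]$ and Murray--von Neumann subequivalence $\precsim$ induces the algebraic order, addition and cancellation of full-projection classes are literally addition and cancellation in $M^*$. Thus the claim is equivalent to the assertion that $M^*$ is a cancellative monoid.

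With this identification, the two hypotheses on $\V(A)$---weak divisibility for order units and strict unperforation---are exactly the hypotheses of Corollary~\ref{cor:cancellation} applied to $M=\V(A)$. In particular, strict unperforation of $M$ is covered by the final ``in particular'' clause of that corollary, so it applies verbatim and gives that $M^*$ is cancellative, completing the proof. I do not expect any real obstacle here: the whole difficulty was already absorbed into Theorem~\ref{thm:refou} and Corollary~\ref{cor:cancellation}, where weak divisibility of order units is used to pass to the simple refinement monoid $M^*\cup\{0\}$ and then deduce cancellation; the only point requiring care is the standard equivalence between a projection being full and its class being an order unit.
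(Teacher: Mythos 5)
Your proof is correct and is exactly the argument the paper intends (the paper states this corollary without proof, as an immediate consequence of Corollary~\ref{cor:cancellation}): since $A$ has real rank zero, $\V(A)$ is a conical refinement monoid, the classes of full projections form precisely the subsemigroup $\V(A)^*$ of order units, and the ``in particular'' clause of Corollary~\ref{cor:cancellation} yields cancellativity. No gaps.
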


We close by developing divisibility results for not necessarily simple
conical refinement monoids with weak divisibility of order units. In
the simple atomless case (where weak divisibility is automatic by
Corollary~\ref{lem:simple}), these were obtained in \cite{PA},
although there only Riesz decomposition was assumed. We remark that
our results below hold true in this more general context, but we shall
not need this.

\begin{theor} {\rm{(cf.\ \cite[Theorem 5.2]{PA})}}
\label{thm:dvou}
Let $M$ be a conical refinement monoid with weak divisibility of order units. If $p$ and $r$ are order units, and $m\in\mathbb{N}$, there are order units $q$ and $s$ with $s\leq r$, $q$, and $p=mq+s$.
\end{theor}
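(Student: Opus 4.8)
The plan is to reduce the statement to the simple case and then invoke \cite[Theorem 5.2]{PA}. By Theorem~\ref{thm:refou}, the set $N := M^*\cup\{0\}$ of order units together with zero is a simple conical refinement monoid, and its nonzero elements are \emph{exactly} the order units of $M$. Since the monoid operation of $N$ is inherited from $M$, the equation $p=mq+s$ and all the sums appearing in it are computed identically in $M$ and in $N$; hence it suffices to produce the decomposition inside $N$, where $p$ and $r$ are nonzero and where the phrase ``order unit'' just means ``nonzero element''.

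Before applying the cited result, I would check that $N$ is atomless, which is the condition that places us in the framework of \cite{PA}. This follows directly from weak divisibility of order units: by Lemma~\ref{lemma_3_2} every order unit $u$ of $M$ splits as $u=v+w$ with $v$ and $w$ order units, that is, with $v,w\in N\setminus\{0\}$, so no nonzero element of $N$ is an atom. Equivalently, $N$ is a simple refinement monoid not isomorphic to $\mathbb{Z}^+$, so Corollary~\ref{lem:simple} applies. Thus $N$ is a simple, conical, atomless refinement monoid, and in particular it enjoys the Riesz decomposition property that is the standing hypothesis of \cite[Theorem 5.2]{PA}.

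I would then apply \cite[Theorem 5.2]{PA} to the monoid $N$, taking as data the nonzero elements $p,r$ and the integer $m$. This produces nonzero elements $q,s\in N$ with $p=mq+s$ and $s\leq_N r$, $s\leq_N q$, where $\leq_N$ denotes the algebraic order of $N$. Finally I would transfer this back to $M$: since $q$ and $s$ are nonzero in $N$, they are order units of $M$; the equation $p=mq+s$ holds verbatim in $M$; and any relation $x\leq_N y$ is witnessed by an element of $N\subseteq M$, so it implies $x\leq y$ in $M$. Hence $s\leq r$, $s\leq q$, and $p=mq+s$ in $M$, as required.

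The only genuinely substantive point is the verification that $N$ is atomless, which is what licenses the use of the simple-case result; I expect this to be the main (though still short) obstacle, since the hypothesis of \cite{PA} is stated for simple atomless monoids. The passage between the orderings of $M$ and $N$ is automatic, because $\leq_N$ is nothing more than the restriction of $\leq$ to $N$ with witnesses required to lie in $N$, and therefore refines $\leq$.
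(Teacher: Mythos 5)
Your proposal is correct and follows essentially the same route as the paper: reduce to the simple monoid $M^*\cup\{0\}$ via Theorem~\ref{thm:refou} and invoke \cite[Theorem 5.2]{PA}. Your explicit verification that $M^*\cup\{0\}$ is atomless (via Lemma~\ref{lemma_3_2}), and your remarks on transferring the algebraic order back to $M$, merely spell out details the paper leaves implicit.
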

\begin{proof}
By Theorem \ref{thm:refou}, $M^*\cup\{0\}$ is a simple refinement monoid, so \cite[Theorem 5.2]{PA} applies.
\end{proof}

The proof of the following result is identical to the one given in the simple case. Simplicity there is only assumed to ensure that the element $r$ in the statement below is an order unit.

\begin{lem} {\rm (cf.\ \cite[Lemma 5.1 (b)]{PA})}
\label{lem:3_3} Let $M$ be a conical refinement monoid, and let $p,r\in M$ with $r$ an order unit. Given $m\in \mathbb{N}$, there exist
$q,s\in M$ such that $p=mq+s$ with $s\leq (m-1)r$.
\end{lem}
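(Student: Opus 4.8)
The plan is to prove the statement by induction on a natural number $N$ chosen so that $p\le Nr$. Such an $N$ exists precisely because $r$ is an order unit, and this is the only place where that hypothesis is used; this matches the preceding remark that in \cite{PA} simplicity served only to guarantee that $r$ is an order unit. The precise inductive statement I would carry is: for every $p\in M$ with $p\le Nr$ there exist $q,s\in M$ with $p=mq+s$ and $s\le(m-1)r$. For the base case $N\le m-1$ one simply takes $q=0$ and $s=p$, since then $p\le Nr\le(m-1)r$.

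For the inductive step, assume $N\ge m$ and write $p+t=Nr$ for some $t\in M$. Applying \cite[Lemma 1.9]{WE} to this equation produces a refinement
\[
\begin{array}{c|cccc}
 & r & r &\cdots & r \\
 \hline p & p_1 & p_2 & \cdots & p_N \\
 t & t_1 & t_2 & \cdots & t_N
\end{array}
\]
with $p_1\le p_2\le\cdots\le p_N$ and $p_i+t_i=r$, so that each $p_i\le r$. I would then peel off one block of $m$ summands: since $p_1\le p_i$ for $i=2,\ldots,m$, write $p_i=p_1+d_i$, whence $p_1+\cdots+p_m=mp_1+w$ with $w=d_2+\cdots+d_m$; each $d_i\le p_i\le r$ gives $w\le(m-1)r$.

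The crucial point, and the step I expect to be the main obstacle, is continuing without letting the remainder grow past $(m-1)r$. Grouping the remaining summands into blocks of $m$ independently would contribute a fresh remainder each time and break the bound. Instead I would fold the current remainder back into the tail: set $p''=w+p_{m+1}+\cdots+p_N$, so that $p=mp_1+p''$ and $p''\le(m-1)r+(N-m)r=(N-1)r$. Because $N-1<N$, the inductive hypothesis applies to $p''$ and yields $p''=mq''+s''$ with $s''\le(m-1)r$. Then $p=m(p_1+q'')+s''$ gives the required decomposition with $q=p_1+q''$ and $s=s''$. Note that the conclusion only asks for $q,s\in M$, so no order-unit bookkeeping on $q,s$ is needed, and conicality handles the degenerate cases (for instance $m=1$, where $(m-1)r=0$ forces $s=0$).
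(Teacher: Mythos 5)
Your argument is correct; each step checks out. The ordered refinement of $p+t=Nr$ supplied by \cite[Lemma 1.9]{WE} gives $p=p_1+\cdots+p_N$ with $p_1\le\cdots\le p_N$ and $p_i\le r$; writing $p_i=p_1+d_i$ for $2\le i\le m$ yields $p_1+\cdots+p_m=mp_1+w$ with $w=d_2+\cdots+d_m\le (m-1)r$; and folding $w$ into the tail gives $p=mp_1+p''$ with $p''\le (m-1)r+(N-m)r=(N-1)r$, so the induction on $N$ (with $p\le Nr$, available precisely because $r$ is an order unit) closes. Note, however, that the paper does not actually write out a proof of this lemma: it asserts that Pardo's proof of \cite[Lemma 5.1 (b)]{PA} in the simple case carries over verbatim, simplicity there being needed only to guarantee that $r$ is an order unit --- the same observation that launches your induction. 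So your proof is a self-contained replacement rather than a reproduction of the paper's (implicit) argument; it is assembled from a tool the paper itself uses repeatedly (the monotone refinement of \cite[Lemma 1.9]{WE}, as in Lemma \ref{lemma_3_1} and Proposition \ref{lemma_2_4}), plus the key idea of absorbing the partial remainder into the tail before recursing, which is exactly where a naive blocking argument would break down. One difference in scope worth noting: Pardo's setting assumes only Riesz decomposition (the paper remarks just before Theorem \ref{thm:dvou} that its results persist in that generality), whereas the monotone refinement you invoke genuinely requires the refinement property; your argument therefore proves the lemma exactly as stated, for conical refinement monoids, but would not yield the more general Riesz-decomposition version alluded to in that remark. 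That is a limitation of generality, not a gap in the proof.
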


\begin{prop}\label{theorem_3_5}
Let $M$ be a conical refinement monoid with weak divisibility of
order units. Let $p,r\in M$, with $r$ an order unit, and let $m\in
\mathbb{N}$. Then $p=mq+s$ for some $q$ and $s$ in $M$ such that $s\leq
r$.
\end{prop}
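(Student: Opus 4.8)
The plan is to bootstrap the coarser decomposition already available in Lemma~\ref{lem:3_3} by feeding it a sufficiently small order unit in place of $r$. Recall that Lemma~\ref{lem:3_3} produces, for any $p$ and any order unit $r$, a decomposition $p=mq+s$ with the weaker bound $s\leq (m-1)r$; the discrepancy between this and the desired conclusion $s\leq r$ is precisely the factor $m-1$, which I would absorb \emph{in advance} by replacing $r$ with a smaller order unit. Crucially, Lemma~\ref{lem:3_3} itself needs no divisibility hypothesis, only that its second argument be an order unit, so we are free to run it on whatever order unit we please.

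First I would dispose of the trivial case $m=1$, where $p=q+s$ with $q=p$ and $s=0\leq r$ does the job. So assume $m\geq 2$. The main step is then to invoke Lemma~\ref{lemma_3_3}, which is exactly where the hypothesis of weak divisibility of order units enters: applying it with the single element $x_1=r$ and with $n=m-1$ yields an order unit $r'$ such that $(m-1)r'\leq^* r$, and in particular $(m-1)r'\leq r$.

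Now apply Lemma~\ref{lem:3_3} to $p$ and the order unit $r'$, obtaining $q,s\in M$ with $p=mq+s$ and $s\leq (m-1)r'$. Chaining the two inequalities gives $s\leq (m-1)r'\leq r$, which is exactly the assertion. I do not anticipate a genuine obstacle here: the entire content is the observation that one should run the coarse decomposition against a shrunken order unit rather than against $r$ directly, and the shrinking is precisely what the divisibility lemma Lemma~\ref{lemma_3_3} supplies. The only point demanding a moment's care is checking that the hypotheses line up—that $r'$ is genuinely an order unit (so that Lemma~\ref{lem:3_3} applies) and that weak divisibility of order units is available, which it is by assumption and is what powers Lemma~\ref{lemma_3_3}.
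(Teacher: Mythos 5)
Your proposal is correct and follows essentially the same route as the paper's own proof: invoke Lemma~\ref{lemma_3_3} to produce an order unit $r'$ with $(m-1)r'\leq r$, then apply Lemma~\ref{lem:3_3} to $p$ and $r'$ and chain the inequalities. The separate treatment of $m=1$ is a harmless extra precaution.
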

\begin{proof}
By Lemma \ref{lemma_3_3}, there exists an order unit $r'\in M$ such that $(m-1)r'<r$. Now
apply Lemma \ref{lem:3_3} to $p$ and $r'$. Then there exist
$q$ and $s$ in $M$ such that $p=mq+s$ with $s\leq (m-1)r'<r$, and the result follows.
\end{proof}

\section{Properly infinite order units}

In this section we prove a decomposition result for properly infinite order units in a refinement monoid that will be important for our applications to the corona factorization property for C$^*$-algebras discussed in the next section. We will greatly benefit from the results in the previous section, that allow us to reduce to the simple case.
We begin with a technical lemma:


\begin{lem}
\label{lem:induction} Let $M$ be a conical simple refinement
monoid. Suppose we are given non-zero elements $s,t,a_1,a_2, b_1, b_2,
z$ in $M$ and $n\in \mathbb{N}$ such that
\begin{enumerate}
\item $ns+a_1+a_2$ and $nt+b_1+b_2$ are infinite elements.
\item $a_1+z=s$ and $t+z=b_1$.
\end{enumerate}
Then, there are elements $s',t', p,q$ in $M$ such that
$$s'+t'=s+t, \qquad s'=a_1+p, \qquad  t'=t+q, \qquad z = p+q,$$
and such that the two elements  $(n+1)s'+a_2$ and $(n+1)t'+b_2$ are
infinite.
\end{lem}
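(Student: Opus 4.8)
The plan is to notice first that the two displayed identities cost nothing: if we split $z=p+q$ and set $s'=a_1+p$, $t'=t+q$, then $s'+t'=a_1+p+t+q=a_1+z+t=s+t$ automatically by (ii), and $z=p+q$ holds by construction. So the whole content of the lemma is to choose the splitting $z=p+q$ so that $(n+1)s'+a_2$ and $(n+1)t'+b_2$ are infinite. Substituting $s=a_1+z$ and $b_1=t+z$ into (i), the hypotheses say that $(n+1)a_1+nz+a_2$ and $(n+1)t+z+b_2$ are infinite, while the two target elements are $(n+1)a_1+(n+1)p+a_2$ and $(n+1)t+(n+1)q+b_2$. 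A useful consistency check is that the sum of the two targets equals the sum of the two hypotheses (both equal $(n+1)(s+t)+a_2+b_2$), so globally no infiniteness is lost; the problem is purely one of distribution.

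Next I would use that $M$ is simple, where infinite coincides with properly infinite and a properly infinite element dominates every element; hence an element is infinite exactly when it dominates an infinite element. This gives clean sufficient conditions on the split. Writing $nz=np+nq$, the first target dominates the first hypothesis (via the common part $(n+1)a_1+a_2$) as soon as $nq\le p$, and the second target dominates the second hypothesis as soon as $p\le nq$; in either case that target becomes infinite. Independently, if $np$ (resp.\ $nq$) happens to be infinite, then the first (resp.\ second) target is infinite for free, being $\ge(n+1)p\ge np$ (resp.\ $\ge(n+1)q\ge nq$).

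The crux, and what I expect to be the main obstacle, is to make \emph{both} targets infinite from a single splitting of $z$: the two domination conditions $nq\le p$ and $p\le nq$ pull in opposite directions, so no naive choice works, and the discrepancy cannot be closed by order comparison alone. This is exactly where refinement is indispensable (in non-refinement ``collapse'' monoids the analogous statement fails). My plan is to produce the splitting by the divisibility machinery of the previous section, working inside the simple refinement monoid $M^*\cup\{0\}$ given by Theorem~\ref{thm:refou}: apply Theorem~\ref{thm:dvou} to write $z=(n+1)q+s$ with $s$ an order unit, $s\le q$, and put $p=nq+s$. Then $p+q=z$ and $nq\le p$, so the first target is immediately infinite.

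It then remains to force the second target $(n+1)t+(n+1)q+b_2$ to be infinite, where the second hypothesis $(n+1)t+z+b_2$ exceeds it exactly by the residue $s$. Absorbing this residue is the single delicate point, and I would attack it by choosing $s$ small using Lemma~\ref{lemma_3_3} and invoking the properly infinite order unit supplied by Lemma~\ref{lem:piou}, so that a suitable multiple of $q$ inside the second target becomes properly infinite; failing a uniform estimate, I would split into cases according to which of $(n+1)a_1$, $(n+1)t$, $np$, $nq$ is already infinite (the extreme splits $p=0$ or $q=0$ dispatching the cases where $(n+1)a_1$ or $(n+1)t$ is infinite). In all cases the only hard issue is the simultaneous infiniteness, resolved by combining refinement (to obtain the balanced split) with simplicity (to upgrade proportionality to the proper infiniteness that swallows the leftover).
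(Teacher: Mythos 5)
Your reduction of the lemma to choosing a splitting $z=p+q$, and your choice of that splitting, are exactly right and in fact coincide with the paper's: the paper also applies \cite[Theorem 5.2]{PA} (the divisibility result behind Theorem~\ref{thm:dvou}) to write $z=(n+1)x+y$ with $y\le x$, and sets $q=x$, $p=nx+y$, after which the first target dominates $(n+1)a_1+nz+a_2=ns+a_1+a_2$ and is infinite. The genuine gap is precisely the point you flag as ``the single delicate point,'' and none of your proposed repairs can close it. From the fact that $\tau_2+s=(n+1)t+z+b_2$ is infinite you cannot deduce anything about $\tau_2=(n+1)t+(n+1)q+b_2$, because there is no cancellation; and ``$s$ small relative to finitely many order units chosen in advance'' (Lemma~\ref{lemma_3_3}) is the wrong notion of smallness --- what is needed is $s$ bounded by the \emph{slack} in the infiniteness of $nt+b_1+b_2$, an element that only becomes available after refining that equation. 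Your appeal to Lemma~\ref{lem:piou} also fails: it produces \emph{some} $k$ with $kq$ properly infinite, but you need the specific multiple $(n+1)q$, and you have no control over $k$ (Proposition~\ref{exem_1} exhibits a simple conical refinement monoid with an order unit $u$ finite and $2u$ properly infinite, so specific multiples of order units genuinely can fail to be infinite). Finally, your case split conditions on infiniteness of $np$ and $nq$, which are the objects being constructed --- this is circular --- and the main case, where none of $(n+1)a_1$, $(n+1)t$, $np$, $nq$ is infinite, is left without any argument.

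The missing idea, which is the heart of the paper's proof, is to apply refinement to hypothesis (i) \emph{before} dividing $z$. Put $b=nt+b_1+b_2$; by simplicity $b$ is properly infinite, so $b=b+c$ with $c$ infinite, and the equation $b+c=t+\cdots+t+b_1+b_2$ admits a refinement with rows $(d_1,\dots,d_{n+2})$ and $(e_1,\dots,e_{n+2})$, so that $t=d_i+e_i$ for $i\le n$, $b_1=d_{n+1}+e_{n+1}$, $b_2=d_{n+2}+e_{n+2}$, and $d_1+\cdots+d_{n+2}=b$, with $e_n\neq 0$. The element $e_n$ is the surplus of one copy of $t$: the $d_i$'s alone already reconstitute $b$. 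Now apply the division theorem with remainder bounded by this surplus: $z=(n+1)x+y$ with $y\le e_n$. Then, with $q=x$ and $p=nx+y$,
\begin{align*}
(n+1)t+(n+1)x+b_2 &\ge d_1+\cdots+d_n+e_n+t+(n+1)x+b_2\\
&\ge d_1+\cdots+d_n+y+t+(n+1)x+b_2\\
&= d_1+\cdots+d_n+t+z+b_2\\
&= d_1+\cdots+d_n+b_1+b_2\;\ge\; d_1+\cdots+d_{n+2}\;=\;b,
\end{align*}
using $nt\ge d_1+\cdots+d_n+e_n$, $e_n\ge y$, $(n+1)x+y=z$ and $t+z=b_1$. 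So the second target dominates the infinite element $b$. The absorption you were seeking thus comes from refining the infinite equation itself, not from shrinking the remainder against auxiliary order units.
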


\begin{proof}
Put $b=nt+b_1+b_2$, which by assumption is an infinite element (hence
properly infinite). We may thus write $b=b+c$ with $c$ an infinite
element. There is a refinement
\[
\begin{array}{ c|ccccc}
  & t & \cdots & t & b_1 & b_2\\
\hline  b & d_1 & \cdots & d_n & d_{n+1} & d_{n+2} \\
 c &e_1 & \cdots & e_n & e_{n+1} & e_{n+2} \end{array}\,,
\]
where all entries are non-zero, except possibly $d_{n+2}$ and $e_{n+2}$.

Use \cite[Theorem 5.2]{PA} to find elements $x$, $y$ in $M$ such that
$z=(n+1)x+y$, with $y\leq e_n$. Set $p=nx+y$, $q=x$, and
\[
s'=a_1+nx+y = a_1+p, \qquad  t'=x+t =t+q.
\]
Then $p+q=z$ and $s'+t'=a_1+t+z = s+t$. Further,
\begin{align*}
(n+1)s'+a_2 & =(n+1)a_1+(n+1)nx+(n+1)y+a_2\\ & \geq
(n+1)a_1+n(n+1)x+ny+a_2 \\ & =(n+1)a_1+nz+a_2=ns+a_1+a_2\,,
\end{align*}
so that $(n+1)s'+a_2$ is infinite. Also,
\begin{align*}
(n+1)t'+b_2 & =(n+1)t+(n+1)x+b_2\\ & \geq
d_1+\cdots+d_{n-1}+d_n+e_n+t+(n+1)x+b_2 \\ & \geq d_1+\cdots+d_{n-1}+d_n+y+t+(n+1)x+b_2\\
& =d_1+\cdots+d_n+t+z+b_2\\ & \geq d_1+\cdots+d_{n+2}=b\,,
\end{align*}
whence $(n+1)t'+b_2$ is infinite, as desired.
\end{proof}

\begin{prop}\label{lemma_2_4}
Let $M$ be a simple, conical  refinement monoid, and let $u$ be a non-zero element of $M$. Suppose that $nu$ is infinite for some $n\in\mathbb{N}$. Then there exist $s,t\in M$ with $u=s+t$ such that $ns$ and $nt$ are infinite.
\end{prop}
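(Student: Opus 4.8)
The plan is to prove the statement by induction on $n$, with Lemma \ref{lem:induction} as the engine of the inductive step. First I would record two simplifications. Since $M$ is simple, an element is infinite if and only if it is properly infinite, so throughout I may read ``infinite'' as ``properly infinite''; and if $M\cong\Z^+$ there are no infinite elements at all, so the hypothesis forces $M$ to be atomless, hence weakly divisible by Corollary \ref{lem:simple}. In particular every nonzero element is an order unit, and all the divisibility machinery of Section~3 (notably the splitting used inside the proof of Lemma \ref{lem:induction}) is available.

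For the base case $n=1$ the element $u$ is itself properly infinite, so there is $w$ with $2u+w=u$; taking $s=u$ and $t=u+w$ gives $u=s+t$ with both summands infinite. For the inductive step I would feed a ``level $n$'' decomposition into Lemma \ref{lem:induction}: given $u=s+t$ with $ns$ and $nt$ infinite, I split $s=a_1+z$ with $a_1,z$ nonzero (possible since $M$ is atomless), set $b_1=t+z$, and verify the hypotheses of the lemma. Indeed $ns+a_1$ is infinite because $ns$ is, and $nt+b_1=(n+1)t+z$ is infinite because $nt$ is. The lemma then returns $s',t'$ with $s'+t'=u$ and $(n+1)s'$, $(n+1)t'$ infinite, that is, a ``level $n+1$'' decomposition. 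Here I am applying the lemma with its auxiliary terms $a_2,b_2$ equal to $0$, which its proof permits (the corner refinement entries $d_{n+2},e_{n+2}$ are allowed to vanish); this is exactly what makes the output clean rather than leaving a leftover summand.

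The step just described shows that a good decomposition at level $n$ propagates to level $n+1$, so the remaining task is to produce a good decomposition at the smallest level $k_0$ for which $k_0u$ is infinite (necessarily $k_0\le n$), from which propagation yields level $n$. If $k_0=1$ this is the base case. The hard part, and the reason Lemma \ref{lem:induction} is stated with genuinely nonzero reservoir terms $a_2,b_2$, is the case $k_0\ge 2$, where $k_0u$ is infinite but $(k_0-1)u$ is finite: one cannot start the clean propagation, and a naive transfer of mass from $t$ to $s$ that repairs infiniteness of the $s$-side destroys it on the $t$-side. My plan for this crux is to run Lemma \ref{lem:induction} from level $1$ with nonzero reservoirs $a_2,b_2$ absorbing the ``infiniteness deficit'', keeping $s+t=u$ throughout, and then to discharge the reservoirs at the final stage using the divisibility result \cite[Theorem 5.2]{PA} (available because $M^*\cup\{0\}$ is a simple refinement monoid by Theorem \ref{thm:refou}) together with proper infiniteness to absorb them. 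Eliminating the reservoir while simultaneously keeping both the $s$-side and the $t$-side infinite is where I expect the genuine difficulty to lie.
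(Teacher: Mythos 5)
Your preliminary reductions and your propagation step are sound: in a simple monoid infinite equals properly infinite, the $n=1$ case is immediate, and Lemma \ref{lem:induction} applied with $a_2=b_2=0$ (legitimate, as you note, by inspecting its proof; in fact the paper's own induction needs this at its final step, where the tails $u_{k+2}+\cdots+u_n$ and $v_{k+2}+\cdots+v_n$ are empty sums) does convert a decomposition $u=s+t$ with $ns,nt$ infinite into one with $(n+1)s',(n+1)t'$ infinite. But this propagation buys nothing. Infiniteness is upward hereditary ($x\le y$ with $x$ infinite implies $y$ infinite), so if $u=s+t$ with $k_0s$ and $k_0t$ infinite, where $k_0$ is minimal with $k_0u$ infinite, then $ns\ge k_0s$ and $nt\ge k_0t$ are automatically infinite for every $n\ge k_0$. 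The proposition is therefore exactly equivalent to its minimal case $n=k_0$, and when $k_0\ge 2$ (which genuinely occurs: Proposition \ref{exem_1} exhibits a simple conical refinement monoid with $u$ finite and $2u$ properly infinite) that minimal case is precisely what you leave open. What you have actually proved is only the trivial instance in which $u$ itself is infinite, where one may simply take $s=u$ and $t=u+w$ with $2u+w=u$.

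The idea you are missing---and it is the real content of the paper's proof---is how to manufacture reservoirs that are consumed \emph{exactly}, rather than discharged at the end. Write $nu=nu+v$ with $v$ infinite, and apply Wehrung's refinement lemma \cite[Lemma 1.9]{WE} (cf.\ Corollary \ref{cor:wehrung}) to $nu+v=u+\cdots+u$: this produces non-zero elements $u_1\ge\cdots\ge u_n$ and $v_1\le\cdots\le v_n$ with $u_i+v_i=u$, $u_1+\cdots+u_n=nu$, $v_1+\cdots+v_n=v$, together with connecting elements $z_i$ satisfying $u_i=u_{i+1}+z_i$ and $v_i+z_i=v_{i+1}$. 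Start with $s_1=u_1$, $t_1=v_1$, whose reservoirs $u_2+\cdots+u_n$ and $v_2+\cdots+v_n$ make $s_1+(u_2+\cdots+u_n)=nu$ and $t_1+(v_2+\cdots+v_n)=v$ infinite, and apply Lemma \ref{lem:induction} at stage $k$ with $a_1=u_{k+1}$, $b_1=v_{k+1}$, $a_2=u_{k+2}+\cdots+u_n$, $b_2=v_{k+2}+\cdots+v_n$, $z=z'_k$, updating $z'_{k+1}=z_{k+1}+p$ so that hypothesis (ii) of the lemma is available at the next stage. Each step consumes exactly one column of the refinement matrix, and after $n-1$ steps the reservoirs are empty, leaving $ns_n$ and $nt_n$ infinite. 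Nothing is ever ``discharged'', and this is essential: infiniteness does not pass downward, so from ``$ns'+a_2$ is infinite with $a_2\neq 0$'' one cannot conclude that $ns'$ is infinite, which means a strategy that ends holding a non-empty reservoir to be absorbed at the last moment cannot be completed. This is the concrete gap in your proposal: the Wehrung refinement, which supplies both the exactly-exhaustible reservoirs and the bookkeeping elements $z'_k$ needed to re-enter Lemma \ref{lem:induction} at every stage, is absent.
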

\begin{proof}
Write $nu=nu+v$ where $v\in M$ is infinite. Then using
\cite[Lemma 1.9]{WE} and its proof (see also Corollary \ref{cor:wehrung}) we can find non-zero elements $u_1\geq \cdots\geq u_n$ and
$v_1\leq \cdots\leq v_n$ with non-zero complements $u_i=u_{i+1}+z_i$ and
$v_i+z_i=v_{i+1}$ for every $i=1,\ldots,n-1$ such that
\[
\begin{array}{ c|cccc}
  & u & u &\cdots & u \\
\hline  nu & u_1 & u_2 & \cdots & u_n \\
 v & v_1 & v_2 & \cdots & v_n \end{array}\,
\]
is a refinement matrix.

We show by induction that for each $k=1,2,\dots,n$ there are elements
$s_k$ and $t_k$ in $M$ such that $s_k+t_k=u$, and the two elements
$$ks_k + (u_{k+1} + \cdots + u_n), \qquad kt_k + (v_{k+1} + \cdots +
v_n)$$
are infinite. Moreover, if $k < n$, then there is an element $z'_k$
such that $u_{k+1}+z'_k = s_{k}$ and $t_{k} + z'_{k} =
v_{k+1}$. For $k=n$ the two elements displayed above are equal to
$ns_n$ and $nt_n$, respectively, and so it will follow that $s=s_n$
and $t=t_n$ have the desired properties.

For $k=1$ we can take $s_1=u_1$, $t_1=v_1$, and $z'_1 = z_1$. Assume
that $1 \le k < n$ and that $s_k$, $t_k$, and $z'_k$ have been found. Then
it follows from Lemma~\ref{lem:induction} that there are elements
$s_{k+1}, t_{k+1}, p,q \in M$ such that
$$s_{k+1}+t_{k+1} = s_k+t_k=u,  \quad s_{k+1} = u_{k+1}+p, \quad
t_{k+1} = t_k + q, \quad p+q=z'_k,$$
and such that $(k+1)s_{k+1}+(u_{k+2} + \cdots + u_n)$ and
$(k+1)t_{k+1}+(v_{k+2} + \cdots + v_n)$ are infinite. If $k < n-1$, then
put $z'_{k+1} = z_{k+1}+p$ and calculate:
\begin{eqnarray*}
u_{k+2} + z'_{k+1}&=& u_{k+2} + z_{k+1} + p \; = \; u_{k+1} + p \; = \;
s_{k+1},\\
t_{k+1} + z'_{k+1} &=& t_k+q+z_{k+1}+p \, = \, t_k + z'_k + z_{k+1} \,
= \, v_{k+1} + z_{k+1} \, = \, v_{k+2}.
\end{eqnarray*}
\end{proof}




\begin{theor}
\label{prop:nonsimple} Let $M$ be a conical refinement monoid. Let $u$
be an order unit such that $nu$ is properly infinite for some
$n\in\mathbb{N}$. Then there exist order units $s,t\in M$ with
$u=s+t$, and such that $ns$ and $nt$ are properly infinite.
\end{theor}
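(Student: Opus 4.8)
The plan is to reduce to the already-established simple case, Proposition~\ref{lemma_2_4}, by passing to the submonoid $M^*\cup\{0\}$ of order units. First I would observe that the hypotheses immediately put us in a favorable situation: since $u$ is an order unit and $u\le nu$, the element $nu$ is itself an order unit, and by assumption it is properly infinite. Thus $M$ is a conical refinement monoid possessing a properly infinite order unit, so Proposition~\ref{prop:wdou} guarantees that $M$ has weak divisibility of order units. With that in hand, Theorem~\ref{thm:refou} applies and tells us that $M^*\cup\{0\}$ is a \emph{simple} conical refinement monoid. This is the crucial structural reduction, and everything will be carried out inside this simple monoid.

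The bridge between $M$ and $M^*\cup\{0\}$ is where care is needed, because the algebraic orderings (and hence the notions of \emph{infinite} and \emph{properly infinite}) a priori differ in the two monoids: an inequality $x\le y$ in $M^*\cup\{0\}$ requires an order-unit witness, whereas in $M$ any witness suffices. The point I would verify is that $nu$ is infinite \emph{in} $M^*\cup\{0\}$. Since $nu$ is properly infinite in $M$, we may write $2nu+w=nu$ for some $w\in M$; setting $z=nu+w$ gives $nu+z=nu$, and $z\ge nu\ge u$ forces $z$ to be an order unit. Hence $nu+z=nu$ is a relation exhibiting $nu$ as an infinite element of the simple monoid $M^*\cup\{0\}$, which is exactly the hypothesis needed to invoke the simple case.

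Now Proposition~\ref{lemma_2_4}, applied to the simple monoid $M^*\cup\{0\}$ and the element $u$, produces $s,t\in M^*\cup\{0\}$ with $u=s+t$ such that $ns$ and $nt$ are infinite in $M^*\cup\{0\}$; being infinite they are in particular non-zero, hence order units of $M$. Because $M^*\cup\{0\}$ is simple, the standard fact that infinite elements of a simple monoid are properly infinite upgrades $ns$ and $nt$ to properly infinite elements of $M^*\cup\{0\}$. Finally, a relation $2(ns)+z'=ns$ with $z'\in M^*\cup\{0\}$ is in particular a relation in $M$, so $ns$ (and likewise $nt$) is properly infinite in $M$, which is precisely the conclusion sought. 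I expect the main obstacle to be exactly this reconciliation of the two orderings: one must ensure that proper infiniteness transports from $M$ down to the submonoid (via an order-unit witness) and then back up again, and that Proposition~\ref{lemma_2_4} may legitimately be invoked with $M^*\cup\{0\}$ playing the role of the ambient simple monoid.
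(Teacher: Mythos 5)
Your proposal is correct and follows essentially the same route as the paper's own proof: invoke Proposition~\ref{prop:wdou} and Theorem~\ref{thm:refou} to make $M^*\cup\{0\}$ a simple conical refinement monoid, apply Proposition~\ref{lemma_2_4} there, and transfer the conclusion back to $M$. The only difference is that you spell out the details the paper leaves implicit (the order-unit witness showing $nu$ is infinite in $M^*\cup\{0\}$, and the passage from infinite to properly infinite in the simple monoid), which is a virtue, not a deviation.
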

\begin{proof}
We first note that $M$ has weak divisibility of order units, by Proposition \ref{prop:wdou}. Thus Theorem \ref{thm:refou} applies to conclude that $M^*\cup\{0\}$ is a simple refinement monoid. Since $nu$ is properly infinite (hence infinite as an element of $M^*\cup\{0\}$), we may use Proposition \ref{lemma_2_4} to find elements $s$ and $t$ in $M^*$ with $u=s+t$, and such that $ns$ and $nt$ are properly infinite in $M^*\cup\{0\}$, and so also in $M$.
\end{proof}

\begin{corol}\label{corollary_1_6}
Let $M$ be a conical refinement monoid. If $u$ is an order unit
and $nu$ is properly infinite for some $n\in \mathbb{N}$,  then there
is a sequence $t_1,t_2,t_3, \dots$ of order units in $M$ such that
$$t_1+t_2+ \cdots + t_k \le u$$ for all $k$, and $nt_i$ is properly
infinite for all $i$.
\end{corol}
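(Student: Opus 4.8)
The plan is to build the sequence recursively by repeated application of Theorem~\ref{prop:nonsimple}. I set $u_0 = u$ and note that $u_0$ is an order unit with $nu_0$ properly infinite. Applying Theorem~\ref{prop:nonsimple} to $u_0$ yields order units $t_1$ and $u_1$ with $u_0 = t_1 + u_1$ and with both $nt_1$ and $nu_1$ properly infinite. The crucial observation is that the remainder $u_1$ is again an order unit whose $n$-fold multiple is properly infinite, so the hypotheses of Theorem~\ref{prop:nonsimple} are exactly reinstated and the step can be iterated.

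Carrying this out inductively, suppose order units $t_1, \dots, t_{k-1}$ and $u_{k-1}$ have been produced with each $nt_i$ properly infinite, with $u_{k-1}$ an order unit, and with $nu_{k-1}$ properly infinite. A further application of Theorem~\ref{prop:nonsimple} to $u_{k-1}$ gives order units $t_k$ and $u_k$ with $u_{k-1} = t_k + u_k$ and with $nt_k$ and $nu_k$ properly infinite. This defines the whole sequence $(t_k)_{k\ge 1}$, and by construction each $nt_k$ is properly infinite.

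Finally I would verify the partial-sum bound by telescoping the decompositions:
$$u = u_0 = t_1 + u_1 = t_1 + t_2 + u_2 = \cdots = t_1 + t_2 + \cdots + t_k + u_k$$
for every $k$. Since $u_k \in M$, each such identity exhibits $t_1 + \cdots + t_k$ as bounded above by $u$ in the algebraic order, so $t_1 + \cdots + t_k \le u$ for all $k$, as required.

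I do not anticipate a genuine obstacle here, as the substantive content is entirely contained in Theorem~\ref{prop:nonsimple}. The one point on which the argument hinges is that each decomposition returns \emph{both} summands as order units with the proper-infiniteness property intact; this is exactly the strengthened form of the conclusion in Theorem~\ref{prop:nonsimple}, and it is what permits the recursion to proceed indefinitely rather than stalling after finitely many steps.
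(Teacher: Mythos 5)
Your proof is correct and is essentially identical to the paper's own argument: the paper likewise applies Theorem~\ref{prop:nonsimple} to split $u=s_1+t_1$ and then inductively splits each remainder $s_i=s_{i+1}+t_{i+1}$, which is exactly your recursion with $u_k$ in place of $s_k$. The telescoping identity $u = t_1+\cdots+t_k+u_k$ giving $t_1+\cdots+t_k\le u$ is the same (implicit) final step.
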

\begin{proof}
By Theorem \ref{prop:nonsimple}, there are order units $s_1$ and $t_1$ such that $u=s_1+t_1$ and $ns_1$ and $nt_1$ are properly infinite. Continuing inductively, we may split each
$s_i=s_{i+1}+t_{i+1}$ for some order units $s_{i+1},t_{i+1}$ such that
$ns_{i+1}$ and $nt_{i+1}$ are properly infinite.
\end{proof}

\section{The Corona Factorization property in Monoids}

\noindent Recall that a \Cs{} $A$ is said to have the Corona
Factorization Property (CFP) if and only if every
full projection in $\cM(A\otimes \cK)$, the multiplier algebra of $A
\otimes \cK$, is properly infinite. We shall in this section translate
the CFP into a comparability property of the \Cs{} $A$ itself
(rather than its multiplier algebra)---under the assumption that $A$ is
of real rank zero. We begin by phrasing this comparability
property at the level of monoids.

\begin{defi}
A conical monoid $M$ is said to have the
\emph{strong Corona Factorization Property} (strong CFP)
if whenever $x, y_1,y_2,\ldots$
are elements in $M$ and $m$ is a natural number, one has
$$\forall n \in \N: x \le my_n \implies \exists k \in \N: x \le y_1+y_2+
\cdots + y_k.$$
\end{defi}

\noindent The property of monoids that matches the Corona
Factorization Property for \Cs s of real rank zero is weaker than the
property defined above (see Theorem~\ref{thm:CFP}). The strong CFP
considered above is perhaps more natural to
study than the weaker one defined below, and it also matches a
property of \Cs s (see
Theorem~\ref{thm:CFP2} below). To define the weaker notion of the
Corona Factorization
Property we first need the following:

\begin{defi}
A sequence $\{x_n\}_{n=1}^\infty$ in a monoid $M$ is said to
be \emph{full} if it is increasing and if for every $y \in M$ there
are natural numbers $n$ and $m$ such that $y\leq mx_n$.
\end{defi}

\begin{rema}
Every countable monoid $M=\{m_0,m_1,\ldots\}$ has a
full sequence. Indeed, the sequence $\{x_n\}$ given by
$x_n:=m_0+\cdots+m_n$ does the job.

The constant sequence $\{x_n\}$, where $x_n=x$ for all $n$, is full if
and only if the element $x$ is an order unit.

Full sequences are introduced to make up for the fact
that a monoid need not contain an order unit.
\end{rema}

\begin{defi}
A conical monoid $M$ is said to satisfy the
\emph{Corona Factorization Property} (CFP) if for every full
sequence $\{x_n\}$ in $M$, for every
sequence $\{y_n\}$ in $M$, and for every natural number $m$, one
has
$$\forall n \in \N: x_n \le my_n \implies  \exists
k \in \N:  x_1 \le y_1+y_2+ \cdots + y_k.$$
\end{defi}

\noindent It is clear that every (conical) monoid satisfying the
strong CFP also satisfies the CFP. The two notions clearly agree for
simple (conical) monoids.

If $M$ has the CFP and $\{x_n\}$, $\{y_n\}$, and $m$ are as above with
$x_n \le my_n$ for all $n$, then for all natural numbers $n$ and $k$
there exists a natural number $l$ such that $x_n \le y_k + y_{k+1} +
\cdots + y_l$. (To see this, one needs only consider the case where $k
\ge n$. Next, as $x_n \le x_k$, it suffices to consider the case where
$n=k$. Now apply the CFP to the sequences $\{x_i\}_{i \ge n}$ and
$\{y_i\}_{i \ge n}$.)

\begin{exem} \label{ex:n-comp}
Every \emph{almost unperforated} conical monoid satisfies the strong
CFP. More generally, if $M$ is a conical monoid which has
\emph{$n$-comparison} for some natural number $n$, then $M$ has the
strong CFP (see Definition \ref{def:comparison}).

Indeed, suppose that $M$ has $n$-comparison, suppose that $x,y_1,y_2, \dots$ are
elements in $M$, and $m$ is a natural number such that $x \le
my_j$ for all $j$. Put
$$z_j = y_{j(m+1)+1} + y_{j(m+1)+2} + \cdots + y_{j(m+1)+m+1}, \qquad
j=0,1,\dots,n.$$
Then $(m+1)x \le mz_j$, whence $x <_s z_j$ for all $j$, which by the
definition of $n$-comparison
implies that
$$x \le z_0+z_1 + \cdots +z_n = y_1+y_2 + \cdots + y_{n(m+1)+m+1}.$$
This shows that $M$ has the strong CFP.
\end{exem}

\noindent We now relate the CFP of a monoid to the CFP of a \Cs, and
hence we express the CFP for \Cs s in terms of a comparability
property of the \Cs. First we need two (well-known) lemmas about
comparison of projections in a multiplier algebra:

\begin{lem} \label{P-p}
Let $A$ be a $\sigma$-unital stable \Cs, let $P$ be a properly infinite, full
projection in $\cM(A)$, and let $p \le P$ be a projection in $A$. Then
$P-p$ is properly infinite and full in $\cM(A)$.
\end{lem}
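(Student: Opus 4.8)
The plan is to establish the single relation $P \sim P-p$, from which both assertions follow at once. Since $p \le P$ we trivially have $P-p \precsim P$, so the whole lemma reduces to proving the reverse subequivalence $P \precsim P-p$; granting this, $P-p \precsim P \precsim P-p$ forces $P \sim P-p$. Now proper infiniteness and fullness are both invariants of Murray--von Neumann equivalence: proper infiniteness because $(P-p)\oplus(P-p) \sim P \oplus P \precsim P \sim P-p$, using that $P$ is properly infinite; and fullness because $\overline{\cM(A)(P-p)\cM(A)} = \overline{\cM(A)P\cM(A)} = \cM(A)$, where the first equality comes from $P \sim P-p$ (if $W^*W=P$, $WW^*=P-p$, then $P=W^*(P-p)W$ and $P-p=WPW^*$ lie in each other's generated ideals) and the second from fullness of $P$. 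Thus it remains only to exhibit a subprojection of $P-p$ that is equivalent to $P$.

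To produce such a copy I would exploit proper infiniteness of $P$ inside the unital corner $B = P\cM(A)P$. Let $V_1,V_2$ be the two sub-isometries witnessing $P \oplus P \precsim P$, so that $V_i^*V_i = P$ and $E_i := V_iV_i^* \le P$ are orthogonal. Then $V_1$ is a proper isometry of $B$, and the projections $F_n := V_1^{\,n-1}E_2(V_1^*)^{\,n-1}$, for $n \ge 1$, are mutually orthogonal subprojections of $P$: indeed $F_n = (V_1^{\,n-1}V_2)(V_1^{\,n-1}V_2)^*$ with $(V_1^{\,n-1}V_2)^*(V_1^{\,n-1}V_2) = P$, so each $F_n \sim P$, and orthogonality follows from $E_1V_1 = V_1$ together with $E_1 \perp E_2$. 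Hence $P$ contains an infinite orthogonal family of copies of itself lying under $P$.

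The crux is then to separate $p$ from one of these copies, and this is exactly where the hypotheses $p \in A$ and $A$ stable and $\sigma$-unital enter: because $p$ is a projection in the ideal $A$ while the partial sums $\sum_{n \le N} F_n$ form an increasing family of projections in $\cM(A)$, one has $pF_n \to 0$ in norm, so I may fix an index $n$ with $\|pF_n\|$ as small as I wish. For such $n$, the compression $(P-p)F_n(P-p)$ differs from $F_n = PF_nP$ by at most $3\|pF_n\|$ in norm (expand $F_n$ using $P = p + (P-p)$), hence has spectrum concentrated near $\{0,1\}$; the obvious spectral projection then yields a genuine projection $F' \le P-p$ with $\|F'-F_n\| < 1$, so that $F' \sim F_n \sim P$. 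This gives $P \precsim F' \le P-p$, as required. I expect the delicate point to be precisely the norm-convergence $pF_n \to 0$ (equivalently, the strict convergence of $\sum_n F_n$ in $\cM(A)$), which is the one place where the ``compactness'' of $p$ relative to the stable algebra $A$ must genuinely be used; once this is secured, the spectral-perturbation step is routine.
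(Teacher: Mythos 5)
Your reduction to proving $P \precsim P-p$ is a reasonable strategy, but the step on which everything hinges is false as stated. You claim that because $p$ lies in $A$ and the $F_n$ are mutually orthogonal projections in $\cM(A)$, one has $\|pF_n\| \to 0$ (equivalently, strict convergence of $\sum_n F_n$). Mutually orthogonal projections in a multiplier algebra need \emph{not} be strictly summable, and $\|pF_n\|$ need not tend to $0$. Concretely, let $A = C([0,1]) \otimes \cK$ (stable and separable), so that $\cM(A)$ contains every bounded norm-continuous function $[0,1] \to B(H)$. Fix an orthonormal basis $\{e_k\}_{k\ge 1}$ of $H$, partition $\{2,3,\dots\}$ into infinite sets $S_1,S_2,\dots$ with $k_n = \min S_n$, and let $F_n^0$ be the projection onto $\overline{\mathrm{span}}\{e_k : k \in S_n\}$. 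Choose pairwise disjoint intervals $I_n$ around $1/n$ (bounded away from $0$) and norm-continuous unitary paths $R_n(t)$, equal to $1$ off $I_n$, acting only on $\mathrm{span}\{e_1,e_{k_n}\}$, with $R_n(1/n)e_{k_n}=e_1$. Then the projections $F_n(t) := R_n(t)F_n^0R_n(t)^*$ lie in $\cM(A)$, are mutually orthogonal, and each is equivalent to $1_{\cM(A)}$ (conjugate an isometry onto the range of $F_n^0$ by $R_n$)---so they are exactly the kind of family your construction produces, sitting under the properly infinite full projection $P=1$---and yet $\|pF_n\| = 1$ for every $n$, where $p \in A$ is the constant function $e_1e_1^*$. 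Your intuition is correct when $A = \cK$ (there $F_n \to 0$ weakly and compactness of $p$ gives the claim), but it fails for general stable $A$; note also that your core argument never uses fullness of $P$, which is a warning sign. Choosing $V_1,V_2$ so that $\sum_n F_n$ \emph{does} converge strictly is essentially the content of the lemma, and the paper gets it by a quite different route: the hypotheses give $P \sim 1$ (properly infinite $+$ full $+$ $K_0(\cM(A))=0$), reducing to $P=1$, and then the Hjelmborg--R{\o}rdam stability theorems do the real work: $(1-p)A(1-p)$ is stable, so its multiplier unit $1-p$ is properly infinite, and $p \precsim 1-p$, which makes $1-p$ full.

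There is a second, smaller gap at the very start: you invoke a Schr\"oder--Bernstein principle, ``$P-p \precsim P \precsim P-p$ forces $P \sim P-p$'', which is false for projections in C$^*$-algebras, even properly infinite full ones. In $M_2(\mathcal{O}_3)$ the projections $1\oplus 0$ and $1 \oplus 1$ are subequivalent to each other (the former being properly infinite), but they are not equivalent, since their classes $[1]$ and $2[1]=0$ differ in $K_0(\mathcal{O}_3)\cong \Z/2\Z$. This gap, unlike the first, is repairable: if you prove $P \precsim P-p$, take a projection $P' \le P-p$ with $P' \sim P$; then $P-p$ is full because it dominates the full projection $P'$, and the chain $(P-p)\oplus(P-p) \precsim P'\oplus P' \precsim P' \le P-p$ (using $P-p \le P \sim P'$ for the first step and proper infiniteness of $P'$ for the second) shows $P-p$ is properly infinite, with no need for $P \sim P-p$. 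But the convergence claim is fatal to the proposal as written.
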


\begin{proof} The assumptions on $P$ and $A$ imply that $P \sim
  1$, i.e., that $P = SS^*$ for some isometry $S$ in $\cM(A)$. Upon
  replacing $p$ by $S^*pS \in A$ we may assume that $P=1$.

Note that $(1-p)A(1-p)$ is $\sigma$-unital because $A$ is. Thus we may
apply \cite[Corollary~4.3]{HR} (and its proof) to conclude that
$(1-p)A(1-p)$ is
  stable. Hence $1-p$, being the unit of the multiplier algebra of the
  stable \Cs{} $(1-p)A(1-p)$, is properly infinite. Again using that
  $A$ is stable, by \cite[Theorem~3.3]{HR} (and its proof), we find
  that $p \precsim
  1-p$. Hence $1 \sim (1-p) \oplus p \precsim (1-p) \oplus (1-p)$
  which shows that $1-p$ is full in $\cM(A)$.
\end{proof}

\begin{lem} \label{comp-multiplier}
Let $A$ be a \Cs, let $\{p_n\}$ and $\{q_n\}$ be sequences of pairwise
orthogonal projections in $A$ such that the sums $P= \sum_{n=1}^\infty
p_n$ and $Q= \sum_{n=1}^\infty q_n$ are strictly convergent in the
multiplier algebra $\cM(A)$, and hence define projections $P$ and $Q$
in $\cM(A)$.
\begin{enumerate}
\item Suppose that there are sequences $\{k_n\}$ and
  $\{l_n\}$ of natural numbers such that $1 \le k_1 < l_1 < k_2 <
  l_2 < k_3 < \cdots$, and such that
$$[p_n] \le [q_{k_n}] + [q_{k_n+1}] + \cdots + [q_{l_n}]$$
for all $n$. Then $P \precsim Q$ in $\cM(A)$.
\item If $P \precsim Q$ in $\cM(A)$, then for every natural number $k$
  there exists a natural number $l$ such that
$$[p_1]+[p_2] + \cdots + [p_k] \le [q_1]+[q_2]+
  \cdots + [q_l]$$
in $\V(A)$.
\end{enumerate}
\end{lem}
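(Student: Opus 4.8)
The two parts are proved independently, both resting on standard techniques for transferring comparison data between $A$ and $\cM(A)$, and I would treat them in turn.

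For (i), the plan is to assemble a single partial isometry out of the pieces furnished by the hypothesis. Write $Q_n := q_{k_n} + q_{k_n+1} + \cdots + q_{l_n}$; since the intervals $[k_n, l_n]$ are pairwise disjoint (because $l_n < k_{n+1}$), the projections $Q_n$ are pairwise orthogonal and $\sum_n Q_n \le Q$. The inequality $[p_n] \le [Q_n]$ in $\V(A)$ says precisely that $p_n \precsim Q_n$, and because $p_n$ and $Q_n$ are projections sitting in $A$ itself this subequivalence is realized inside $A$: there is a partial isometry $v_n \in A$ with $v_n^* v_n = p_n$ and $v_n v_n^* \le Q_n$. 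Thus the initial projections $v_n^* v_n = p_n$ are pairwise orthogonal with strictly convergent sum $P$, while the final projections $v_n v_n^* \le Q_n$ are pairwise orthogonal with sum dominated by $Q$. I would then argue that $V := \sum_n v_n$ converges strictly in $\cM(A)$ to a partial isometry satisfying $V^* V = \sum_n p_n = P$ and $V V^* = \sum_n v_n v_n^* \le Q$, so that $P \sim V V^* \le Q$ and hence $P \precsim Q$.

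For (ii), I would fix $k$, set $p := p_1 + \cdots + p_k$, and use a partial isometry $W \in \cM(A)$ with $W^* W = P$ and $W W^* \le Q$. Since $p \le P$, the element $e := W p W^*$ is a projection with $e \sim p$ and $e \le W P W^* = W W^* \le Q$; as $A$ is an ideal in $\cM(A)$ and $p \in A$, in fact $e \in A$. Writing $Q_l := q_1 + \cdots + q_l$, the partial sums increase strictly to $Q$, so strict convergence together with $e \in A$ yields $e Q_l e \to e Q e = e$ in norm. Choosing $l$ so large that $\|e - e Q_l e\| < 1$ makes $c := e Q_l e$ invertible in the corner $eAe$; then $w := Q_l e c^{-1/2}$ satisfies $w^* w = e$, and since $Q_l w = w$ the projection $w w^*$ lies below $Q_l$. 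Consequently $[p_1] + \cdots + [p_k] = [e] = [w w^*] \le [Q_l] = [q_1] + \cdots + [q_l]$, as required.

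The main obstacle is the strict convergence of the series $\sum_n v_n$ in part (i). To establish it I would show, for each $a \in A$, that the tails satisfy $\big\| \sum_{n=M}^N v_n a \big\|^2 = \big\| a^* \big(\sum_{n=M}^N p_n\big) a \big\|$, the cross terms $v_m^* v_n$ vanishing for $m \ne n$ because the final projections $v_m v_m^*$ and $v_n v_n^*$ are orthogonal; the right-hand side tends to $0$ since $\sum_n p_n$ converges strictly, and a symmetric estimate using the orthogonality of the $v_n v_n^*$ handles $a \sum_n v_n$. This is the familiar fact that a series of partial isometries with pairwise orthogonal initial and final projections converges strictly once both projection series do, and it is what lets the pointwise comparison data be packaged into a single Murray--von Neumann subequivalence in the multiplier algebra.
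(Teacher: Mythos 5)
Your proposal is correct and follows essentially the same route as the paper: in (i) you sum the partial isometries $v_n$ strictly to get $V$ with $V^*V=P$ and $VV^*\le Q$ (the paper asserts this strict convergence without the detail you supply), and in (ii) you transfer $p_1+\cdots+p_k$ into a projection under $Q$ and compare with a finite partial sum. The only difference is that where the paper cites a ``standard argument'' (Lemma~4.4 of R\o rdam's Acta paper) for the last step of (ii), you write out that argument explicitly via the invertibility of $eQ_le$ in the corner $eAe$.
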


\begin{proof}
(i). For each $n$, let $s_n \in A$ be a partial isometry with
$$s_n^*s_n = p_n, \qquad
s_ns_n^* \le q_{k_n} + q_{k_n+1} + \cdots + q_{l_n}.$$
As the sums $\sum p_n$ and $\sum q_n$ are strictly convergent, it
follows that the sum $S = \sum_{n=1}^\infty s_n$ is strictly
convergent in $\cM(A)$. Hence $P = S^*S \sim SS^* \le Q$.

(ii). Suppose that we are given a partial isometry $S \in \cM(A)$
such that $P = S^*S$ and $SS^* \le Q$. Put
$$s_0 = S(p_1+p_2 + \cdots + p_k).$$
Then $s_0$ is a partial isometry in $A$ satisfying $s_0^*s_0 = p_1+p_2
+ \cdots + p_k$ and $s_0s_0^* \le Q$. A standard argument, see e.g.\
\cite[Lemma~4.4]{Ro}, now shows that $p_1+p_2 + \cdots + p_k \precsim
q_1+q_2+ \cdots + q_l$ for some natural number $l$.
\end{proof}

\begin{theor} \label{thm:CFP}
Let $A$ be a $C^*$-algebra such that $A \otimes \cK$ has a countable
approximate unit consisting of projections.
\begin{enumerate}\item
If $A$ has the Corona Factorization Property (for \Cs s), then $\V(A)$ has the
Corona Factorization Property (for monoids).
\item Suppose that $A$ is of real rank
zero. Then $A$ has the Corona Factorization Property (for \Cs s)
if and only if
$\V(A)$ has the Corona Factorization Property (for monoids).
\end{enumerate}\end{theor}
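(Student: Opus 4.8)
The plan is to build a dictionary between the monoid $\V(A)=\V(B)$, where $B:=A\otimes\cK$, and projections in the multiplier algebra $\cM(B)$, and then to read the two forms of the Corona Factorization Property off one another via Lemmas \ref{P-p} and \ref{comp-multiplier}. Since $B$ is $\sigma$-unital and stable, its unit $1\in\cM(B)$ is full and (because $B\cong M_2(B)$) properly infinite, and a full properly infinite projection in $\cM(B)$ is Murray--von Neumann equivalent to $1$; this is the content that Lemma \ref{P-p} packages. Fixing a countable approximate unit of projections $\{e_n\}$ for $B$, the classes $\{[e_n]\}$ form a full sequence in $\V(A)$. Every monoid element is the class of a projection in $B$, and every sequence of elements is realized by pairwise orthogonal projections whose sum converges strictly to a projection in $\cM(B)$. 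The key translation I will use is that such a sum is \emph{full} precisely when the sequence of partial sums of the corresponding classes is a full sequence, while Lemma \ref{comp-multiplier}(i) turns block-by-block domination of classes into subequivalence of the sums, and Lemma \ref{comp-multiplier}(ii) turns a subequivalence of sums back into domination of a finite initial sum of classes.

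For the implication that the CFP of $A$ forces the CFP of $\V(A)$ (part (i), which needs no real rank zero assumption), I would take a full sequence $\{x_n\}$, a sequence $\{y_n\}$ and an $m$ with $x_n\le my_n$ for all $n$, realize $y_n=[q_n]$ by orthogonal projections, and form $Q=\sum q_n$. Because $x_n\le m(y_1+\cdots+y_n)$ and $\{x_n\}$ is full, the partial sums $\sum_{i\le n}y_i$ form a full sequence, so $Q$ should be a full projection. The CFP of $A$ then makes $Q$ properly infinite, whence $mQ\sim Q$. Choosing $p_1$ with $[p_1]=x_1$ we have, from $x_1\le my_1$, that $[p_1]\le m[q_1]$, so $p_1\precsim q_1^{\oplus m}\precsim mQ\sim Q$; applying Lemma \ref{comp-multiplier}(ii) to $p_1\precsim Q$ yields $x_1=[p_1]\le [q_1]+\cdots+[q_k]=y_1+\cdots+y_k$ for some $k$. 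The role of proper infiniteness is exactly to absorb the multiple $m$.

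For the remaining implication in (ii) -- that real rank zero together with the CFP of $\V(A)$ forces the CFP of $A$ -- I would take an arbitrary full projection $P\in\cM(B)$ and use real rank zero of $B$ to write $P=\sum p_n$ as a strict sum of orthogonal projections in $B$ (this is the only use of real rank zero). Fullness gives $1\precsim n_0P$ for some $n_0$; passing to the corona algebra $\cM(B)/B$, in which $P$ and each of its tails $\sum_{i>N}p_i$ have the same image, this provides a \emph{uniform} multiple $m=n_0$ and, for each $n$, an arbitrarily far finite block $Y_n$ of the terms of $P$ with $[e_n]\le m[Y_n]$, the blocks $Y_n$ being pairwise disjoint. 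Feeding $\{[e_n]\}$, $\{[Y_n]\}$ and $m$ into the CFP of $\V(A)$ -- in the strengthened form noted after its definition, which dominates each $[e_n]$ by a block of the $[Y_j]$ starting arbitrarily far out -- produces disjoint blocks with $[e_n]\le [Y_{k_n}]+\cdots+[Y_{l_n}]$, that is, $[f_n]\le[e_n]$ dominated by a disjoint block of terms of $P$ for $f_n=e_n-e_{n-1}$. Lemma \ref{comp-multiplier}(i) then gives $1=\sum f_n\precsim P$, so $P\sim 1$ is properly infinite. Together with part (i) this is the equivalence in (ii).

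The step I expect to be the main obstacle is the fullness dictionary in the multiplier algebra: reconciling the intrinsic notion of a full projection in $\cM(B)$ with the combinatorial notion of a full sequence of partial sums. The delicate point is that a full sequence only guarantees a multiple \emph{depending on the element} being dominated, whereas fullness of a projection is governed by a \emph{single} multiple; the two must be bridged in both directions. In the reverse direction above this is handled by the corona algebra (equivalently, by using Lemma \ref{P-p} to excise the initial, $B$-supported part of $P$) to extract the uniform multiple and the far-out disjoint blocks. In the forward direction the corresponding assertion, that a full sequence of partial sums realizes a genuinely full projection, is the crux, and if the direct implication is unavailable I would instead argue part (i) contrapositively, constructing from a failure of the monoid CFP a strict sum that is full yet, by Lemma \ref{comp-multiplier}(ii) and the obstruction $x_1\not\le y_1+\cdots+y_k$, cannot be properly infinite.
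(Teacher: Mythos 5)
Your treatment of part (ii) is essentially the paper's own proof: decompose the full projection $P$ as a strict sum of projections using real rank zero, use Lemma \ref{P-p} on the tails of $P^{\oplus m}\sim 1$ (your corona-algebra phrasing is an equivalent packaging, as you note) to obtain a \emph{uniform} multiple $m$ together with arbitrarily-far disjoint blocks, feed the full sequence of partial sums of an approximate unit of projections into the strengthened form of the monoid CFP, and conclude $1\precsim P$ from Lemma \ref{comp-multiplier}(i); reading your $\{e_n\}$ as an increasing approximate unit (so that $\{[e_n]\}$ really is a full sequence), this matches the paper step for step.

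Part (i), however, contains a genuine gap, located exactly where you flag the crux. Your ``key translation'' --- that a strict sum $Q=\sum_n q_n$ is full in $\cM(A\otimes\cK)$ precisely when the partial sums of the classes $[q_n]$ form a full sequence in $\V(A)$ --- is false in the direction you need. Take $A$ to be the UHF algebra of type $2^\infty$, $B=A\otimes\cK$ with densely defined trace $\tau$, and pairwise orthogonal projections $q_n\in B$ with $\tau(q_n)=2^{-n}$, summing strictly to $Q$. The partial sums of the $[q_n]$ equal $1-2^{-n}$ in $\V(B)\cong\Z[1/2]^+$, which is a full (increasing) sequence; yet $Q$ is not full in $\cM(B)$: fullness of a projection amounts to $1\precsim Q^{\oplus r}$ for some $r$, which is impossible because the canonical extension $\bar\tau$ of $\tau$ to $\cM(B)$ is invariant under Murray--von Neumann subequivalence and satisfies $\bar\tau(Q^{\oplus r})=r<\infty=\bar\tau(1)$. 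So fullness of your $Q$ cannot be read off the partial sums alone; it must be extracted from the stronger data $x_n\le my_n$ with $\{x_n\}$ full \emph{and increasing}, and your proposal never does this. Your fallback (arguing contrapositively) does not escape the problem: there, too, one must prove that the strict sum realizing $\{y_n\}$ is full, and the only mechanism you offer is the same false principle.

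The missing idea, which is how the paper proceeds, is to realize the increasing full sequence $\{x_n\}$ itself by a second strict sum $Q'=\sum_n q'_n$ with $[q'_n]=x_n$, and to use increasingness to absorb the element-dependent multiples into sums over \emph{disjoint} blocks: writing $1=\sum_n e_n$, fullness gives $[e_j]\le m_j x_{k_j}$, and increasingness turns this into $[e_j]\le x_{k_j}+x_{k_j+1}+\cdots+x_{k_j+m_j-1}$; after thinning the $k_j$ so that the blocks are disjoint, Lemma \ref{comp-multiplier}(i) together with $K_0(\cM(B))=0$ gives $Q'\sim 1$. Fullness of $Q$ then follows with the uniform multiple $m$ of the hypothesis: from $x_n=[q'_n]\le my_n$ and Lemma \ref{comp-multiplier}(i) one gets $1\sim Q'\precsim Q^{\oplus m}$. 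With this established, the rest of your direct argument (the CFP of $A$ makes $Q$ properly infinite, hence $Q\sim 1$; then $p_1\precsim Q$, and Lemma \ref{comp-multiplier}(ii) yields $x_1\le y_1+\cdots+y_k$) is correct, and is just a contrapositive-free reformulation of the paper's proof of part (i).
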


\begin{proof}
We may identify $\V(A \otimes \cK)$ with $\V(A)$, and hence, upon
replacing $A$ with $A \otimes \cK$, we may assume that $A$ is
stable and that $A$ has a countable approximate unit consisting of
projections.

(i). Suppose that $\V(A)$ does not satisfy the CFP, i.e., there exist
a full sequence $\{x_n\}$ in $\V(A)$, another sequence $\{y_n\}$ in
  $\V(A)$, and a natural number $m$ such that $x_n\leq my_n$ for every
  $n$, while
$x_1 \nleqslant y_1+y_2+\cdots+y_k$ for every natural number $k$.

Take sequences of pairwise orthogonal projections
$\{p_n\}$ and $\{q_n\}$ in $A$ with $[q_n]=x_n$ and $[p_n]=y_n$ for
every $n$, and such that the sums $Q:=\sum^{\infty}_{n=1}q_n$ and
$P:=\sum^{\infty}_{n=1}p_n$ are strictly convergent in $\cM(A)$, and
hence define projections $Q$ and $P$ in $\cM(A)$.
We claim that $Q$ is equivalent to
$1=1_{\cM(A)}$. By the assumption that $A$ has a countable
approximate unit consisting of projections, we can write
$1 =\sum^{\infty}_{n=1} e_n$ (with the sum being strictly convergent)
for a suitable sequence $\{e_n\}$ of pairwise orthogonal projections
in $A$. Since $\{x_n\}$ is a full sequence, there exist
natural numbers $m_j$ and $k_j$ such that
$$[e_j] \le m_j x_{k_j} \le x_{k_j} + x_{k_j+1} + \cdots +
x_{k_j+m_j-1}.$$
Upon replacing each $k_j$ with a larger number we can assume that
$k_{j+1} \ge k_j+m_j$. It now follows from Lemma~\ref{comp-multiplier}~(i)
that $1
\precsim Q$ in $\cM(A)$. As $1$ is properly infinite and
$K_0(\cM(A))=0$ it follows that $1 \sim Q$ as claimed.

The $m$-fold direct sum $P \oplus P \oplus \cdots \oplus P$ is
equivalent to the projection $\sum_{n=1}^\infty p_n^\times$, where
$\{p_n^\times\}$ is a sequence of pairwise orthogonal
projections in $A$ whose sum converges strictly, and where
$[p_n^\times]=m[p_n]$ for all $n$. As $[q_n]=x_n \le m y_n =
[p_n^\times]$ we conclude from Lemma~\ref{comp-multiplier}~(i) that $1
\precsim P \oplus P \oplus \cdots \oplus P$, whence $P$ is full in
$\cM(A)$.

We finally observe that $Q \nprecsim P$ in $\cM(A)$. Otherwise, by
Lemma~\ref{comp-multiplier}~(ii), we would
have $x_1 = [q_1]\leq [p_1]+[p_2] + \cdots +[p_k] = y_1+y_2+ \cdots
+y_k$ for some natural number $k$, contradicting the hypothesis.

We have now shown that $P$ is a full projection in $\cM(A)$, and that
$P$ is not properly infinite (otherwise, $P$ would dominate any other
projection in $\cM(A)$).
Hence $A$ does not have  the Corona Factorization Property.

(ii). Suppose that $A$ is
of real rank zero, and that $\V(A)$ has
the CFP. We show that $A$ has the CFP. Take a full projection $P\in
\cM(A)$, and let $m$ be a natural number such that the $m$-fold direct
sum $P \oplus P \oplus \cdots
\oplus P$ is properly infinite (and hence equivalent to 1). We must show
that $P$ itself is properly infinite.

As in (i), write $1=1_{M(A)}=\sum_{n=1}^{\infty}e_n$.
Since $A$ is $\sigma$-unital and of real rank zero, the hereditary
sub-algebra
$PAP$ has a countable approximate unit consisting of
projections. (Indeed, if $\{e_n\}$ is an approximate unit for $A$,
then $\{Pe_nP\}$ is an approximate unit for $PAP$, whence $PAP$ is
$\sigma$-unital, and hence has a countable approximate unit consisting
of projections, because it also is of real rank zero.) It
follows that
we can write $P = \sum_{n=1}^{\infty} p_n$ (the sum being strictly
convergent), where the $p_n$'s are pairwise orthogonal projections in
$PAP$. As in the proof of (i), the $m$-fold direct sum $P \oplus P
\oplus \cdots \oplus P$ is
equivalent to a projection in $\cM(A)$ of the form
$\sum_{n=1}^\infty p_n^\times$, where
$[p_n^\times]=m[p_n]$ for all $n$.

Put $x_n = [e_1] + [e_2] + \cdots + [e_n]$. Then $\{x_n\}$ is a full
sequence in $\V(A)$. Indeed, let
$f$ be an arbitrary projection in $A$. Then $f \precsim 1_{\cM(A)}$,
whence $[f] \le x_n$ for some $n$ by
Lemma~\ref{comp-multiplier}~(ii).

We proceed to show that there is a sequence $1 = k_1 < k_2 < k_3 <
\cdots$ of natural numbers such that
\begin{equation} \label{eq:1}
x_n \le [p_{k_n}^\times] + [p_{k_n+1}^\times] + \cdots +
[p_{k_{n+1}-1}^\times] = m\big([p_{k_n}] + [p_{k_n+1}] + \cdots +
[p_{k_{n+1}-1}]\big)
\end{equation}
for all $n$. The existence of $k_2$ such that \eqref{eq:1} holds for
$n=1$ follows from Lemma~\ref{comp-multiplier}~(ii) applied to the
relation $1 = \sum_{n=1}^{\infty}e_n
\precsim \sum_{n=1}^{\infty} p_n^\times$. To establish \eqref{eq:1}
for $n=2$ use Lemma~\ref{P-p} to see that $\sum_{n=k_2}^\infty
p_n^\times$ is properly infinite and full. Applying
Lemma~\ref{comp-multiplier}~(ii) to the resulting
relation $1 = \sum_{n=1}^{\infty}e_n \precsim  \sum_{n=k_2}^\infty
p_n^\times$ yields $k_3 > k_2$ such that \eqref{eq:1} holds for
$n=2$. Continue in this manner to find the remaining $k_j$'s.

Put $y_n = [p_{k_n}] + [p_{k_n+1}] + \cdots + [p_{k_{n+1}-1}]$. Then
$x_n \le m y_n$ for all $n$ by \eqref{eq:1}. We next claim that there
exists a sequence $1 = l_1 < l_2 < l_3 < \cdots$ of natural numbers
such that
\begin{equation} \label{eq:2}
x_n \le y_{l_n} + y_{l_n+1} + \cdots + y_{l_{n+1}-1}
\end{equation}
for all $n$. The existence of $l_2$ such that \eqref{eq:2} holds for
$n=1$ follows directly from the assumption that $\V(A)$ has the
CFP. Now apply the CFP to the sequences $\{x_n\}$ and $\{y_n'\}$ where
$y_n' = y_{n+l_1}$, noting that $x_n \le x_{n+l_1} \le my_{n+l_1} =
my_n'$. Then we get $l_3 > l_2$ such that \eqref{eq:2} holds for
$n=2$. Continue in this manner to find the remaining $l_j$'s.

Put $j_n = k_{l_n}$. Then, by \eqref{eq:2}, we have
$$[e_n] \le x_n \le [p_{j_n}] + [p_{j_n}+1] + \cdots +
[p_{j_{n+1}-1}]$$
for all $n$. We can now conclude from Lemma~\ref{comp-multiplier}~(i)
that $1 \precsim P$, whence $P$ is properly infinite as desired.
\end{proof}

\noindent Combine the result above with Example~\ref{ex:n-comp} to get the
following:

\begin{corol} Let $A$ be a $\sigma$-unital \Cs{} of real rank zero, and
  suppose that $\V(A)$ is almost unperforated, or that $\V(A)$ has
  $n$-comparison for some natural number $n$. Then $A$ has the Corona
  Factorization Property.
\end{corol}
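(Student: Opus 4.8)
The plan is to combine Example~\ref{ex:n-comp} with Theorem~\ref{thm:CFP}(ii), which together give the conclusion almost immediately; the work lies entirely in checking that the hypotheses of those two results are met. First I would observe that the two alternatives in the hypothesis collapse into one: being almost unperforated is precisely $0$-comparison (Definition~\ref{def:comparison}), so in either case $\V(A)$ has $n$-comparison for some natural number $n \ge 0$. By Example~\ref{ex:n-comp} it then follows that $\V(A)$ has the strong Corona Factorization Property, and since the strong CFP trivially implies the ordinary CFP, the monoid $\V(A)$ has the CFP.

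The second ingredient is Theorem~\ref{thm:CFP}(ii), whose standing hypothesis is that $A \otimes \cK$ admit a countable approximate unit consisting of projections. I would verify this separately, as it is the only point where the assumptions of $\sigma$-unitality and real rank zero genuinely enter (beyond their role in guaranteeing refinement and the comparison properties of $\V(A)$). Since $A$ is $\sigma$-unital, its stabilization $A \otimes \cK$ is again $\sigma$-unital; and since $A$ has real rank zero, so does $A \otimes \cK$, real rank zero being stable under tensoring with $\cK$. A $\sigma$-unital \Cs{} of real rank zero possesses a countable approximate unit of projections, as recalled in Section~1 (a $\sigma$-unital algebra of real rank zero has a countable approximate unit, which may be taken to consist of projections precisely by the real-rank-zero hypothesis). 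Thus Theorem~\ref{thm:CFP}(ii) is applicable to $A$.

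Putting the two pieces together, $A$ is of real rank zero and $\V(A)$ has the CFP, so Theorem~\ref{thm:CFP}(ii) yields that $A$ itself has the Corona Factorization Property, as claimed. I do not expect any genuine obstacle here: all the substantive content has been absorbed into Example~\ref{ex:n-comp} (comparison $\Rightarrow$ strong CFP at the monoid level) and Theorem~\ref{thm:CFP}(ii) (monoid CFP $\Rightarrow$ \Cs{} CFP, in the real rank zero case). The only step demanding a moment's attention is confirming the approximate-unit hypothesis of Theorem~\ref{thm:CFP}, which is where $\sigma$-unitality and real rank zero are put to use.
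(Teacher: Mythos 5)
Your proposal is correct and follows exactly the paper's route: the paper derives this corollary by combining Example~\ref{ex:n-comp} (comparison implies strong CFP, hence CFP, for $\V(A)$) with Theorem~\ref{thm:CFP}(ii). Your additional verification that $A\otimes\cK$ has a countable approximate unit of projections (via $\sigma$-unitality and real rank zero passing to the stabilization) is a detail the paper leaves implicit, and it is handled correctly.
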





\noindent We proceed to study permanence properties of the strong CFP
and of the CFP, in particular with respect to passing to ideals.
Recall the definition of ideals (see below
Corollary~\ref{cor:oubelow}). If $S$ is a subset of $M$, then
the set $I(S)$ of all elements $x \in M$, such that $x \le k(y_1+y_2+
\dots + y_r)$ for some elements $y_1, y_2, \dots, y_r$ in $S$ and some
natural number $k$, is an ideal in $M$. We refer to $I(S)$ as the
\emph{ideal generated} by $S$. An ideal generated by a countable subset $S$
of $M$ is said to be countably generated. If $I=I(S)$ is generated by
the countable set $S = \{y_1,y_2,y_3, \dots\}$, then it is also
generated by thet set $\{y_1,y_1+y_2,y_1+y_2+y_3, \dots\}$, and so
every countably generated ideal is generated by an increasing sequence
in $M$.
Clearly, all ideals in a countable monoid are countably
generated.

As usual, $I \triangleleft M$ is short for saying that $I$ is an ideal
in $M$.

Every ideal in a conical refinement monoid is itself a conical
refinement monoid.

\begin{prop} Let $M$ be a conical monoid and let $I$ be an ideal in
  $M$.
\begin{enumerate}
\item If $M$ has the strong CFP, then so does the quotient monoid $M/I$.
\item If $M$ has the CFP and if $I$ is countably generated, then $M/I$
  has the CFP.
\end{enumerate}
\end{prop}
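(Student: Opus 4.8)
The plan is to reduce both statements to the corresponding property of $M$ by choosing suitable lifts and absorbing all ``error terms'' (elements of $I$) into the sequences fed into the hypothesis in $M$. The basic tool is the elementary observation that for $x,y\in M$ one has $\ol x\le\ol y$ in $M/I$ if and only if $x\le y+a$ for some $a\in I$; I would record this first. In particular $\ol a=0$ for every $a\in I$, so any summand lying in $I$ disappears upon projecting back to $M/I$. Throughout I fix lifts $x,y,x_n,y_n\in M$ of the given elements of $M/I$.

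For (i), suppose $\ol x\le m\ol y_n$ for all $n$. Using the observation, write $x\le my_n+a_n$ with $a_n\in I$, and set $w_n=y_n+a_n$. Since $m\ge1$ we get $x\le my_n+a_n\le m(y_n+a_n)=mw_n$ for every $n$. Now the strong CFP of $M$ applies to $x$, the sequence $\{w_n\}$, and the number $m$, yielding $k$ with $x\le w_1+\cdots+w_k$. Projecting and using $\ol{w_n}=\ol{y_n}$ gives $\ol x\le\ol y_1+\cdots+\ol y_k$, as wanted. No hypothesis on $I$ is needed here, precisely because the strong CFP places no constraint (fullness, monotonicity) on the sequence one may substitute.

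For (ii) the difficulty is that the first sequence in the CFP must be \emph{full} while the multiplier must stay \emph{uniform}: a naive lift of a full sequence of $M/I$ need not be full in $M$, since it fails to dominate multiples of elements of $I$, yet making it full by accumulating terms inflates the multiplier. This is exactly where I would use that $I$ is countably generated: by the remark preceding the statement, $I$ is generated by an increasing sequence $c_1\le c_2\le\cdots$ of elements of $I$. Given a full sequence $\{\ol x_n\}$ and a sequence $\{\ol y_n\}$ with $\ol x_n\le m\ol y_n$, I would write $x_n\le my_n+b_n$ and $x_n\le x_{n+1}+a_n$ with $a_n,b_n\in I$ (the second inequality expressing monotonicity of $\{\ol x_n\}$). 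The crux is to build an increasing sequence $c_n'\in I$ that simultaneously dominates the generators ($c_n'\ge c_n$, ensuring cofinality in $I$) and absorbs the gaps ($c_{n+1}'\ge c_n'+a_n$); concretely $c_1'=c_1$ and $c_{n+1}'=c_n'+a_n+c_{n+1}$ will do. I then set $u_n=x_n+c_n'$ and $v_n=y_n+b_n+c_n'$.

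The remaining verifications are routine. The condition $c_{n+1}'\ge c_n'+a_n$ makes $\{u_n\}$ increasing, and the cofinality of $\{c_n'\}$ in $I$ together with fullness of $\{\ol x_n\}$ makes $\{u_n\}$ full in $M$: any $w$ satisfies $w\le px_N+d$ with $d\in I$, and then $d\le Kc_R'\le Ku_R$ while $px_N\le pu_N$, so $w\le(p+K)u_{\max(N,R)}$. Moreover $u_n=x_n+c_n'\le my_n+b_n+c_n'\le m(y_n+b_n+c_n')=mv_n$ since $m\ge1$, so the CFP of $M$ produces $k$ with $u_1\le v_1+\cdots+v_k$. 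As $\ol{u_1}=\ol x_1$ and $\ol{v_n}=\ol y_n$, projecting yields $\ol x_1\le\ol y_1+\cdots+\ol y_k$. The main obstacle, and the heart of the argument, is the construction of $\{c_n'\}$: it is what lets a \emph{single} increasing sequence serve as the ideal ``padding'' that makes the lift full without enlarging the multiplier, and it is the only point at which countable generation of $I$ is used.
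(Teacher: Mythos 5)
Your proof is correct and takes essentially the same route as the paper's: part (i) is the paper's argument verbatim, and in part (ii) your accumulated padding sequence $\{c_n'\}$ plays exactly the role of the paper's lift $x_n'' = x_n + (z_1+\cdots+z_n) + v_n$ of a full sequence from $M/I$ to a full sequence in $M$, after which both arguments absorb the comparison errors into the $y$-sequence and apply the CFP of $M$. The only (immaterial) difference is that you carry the ideal padding explicitly on both sequences at once, rather than first isolating, as the paper does, the statement that every full sequence in $M/I$ lifts to a full sequence in $M$.
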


\begin{proof}
Let $x \mapsto \overline{x}$ denote the quotient mapping $M \to
M/I$. For $x,y \in M$ one has $\overline{x} \le \overline{y}$ in $M/I$
if and only if there exists $z \in I$ such that $x \le y+z$ in $M$.

(i). Suppose that $M$ has the strong CFP. Let $x,y_1,y_2,
\dots$ be elements in $M$ and let $m$ be a natural number such that
$\overline{x} \le m \overline{y}_n$ for all $n$. Then $x \le m
y_n+z_n$ for some $z_n \in I$. Hence $x \le m(y_n + z_n)$ for all $n$,
whence
$x \le (y_1+z_1) + (y_2+z_2) + \cdots + (y_k+z_k)$
for some $k$ (because $M$ has the strong CFP). This shows that
$\overline{x} \le \overline{y}_1 + \overline{y}_2 + \cdots +
\overline{y}_k$, and we conclude that $M/I$ has the strong CFP.

(ii). Suppose now that $M$ has the CFP. Let us first remark that each full
sequence in $M/I$ lifts to a full sequence in $M$. Indeed, let
$\{x_n\}$ be a sequence in $M$ such that $\{\overline{x}_n\}$ is full
in $M/I$. Then $x_n \le x_{n+1}+z_{n+1}$ for some $z_{n+1} \in
I$. Accordingly, if we set $x'_n = x_n + (z_1+z_2+ \cdots + z_n)$,
then $\overline{x'_n} = \overline{x}_n$ and $x'_1 \le x'_2 \le x'_3
\le \cdots$ in $M$. Let $\{v_n\}$
be an increasing sequence which generates the ideal $I$, and put
$x''_n = x'_n + v_n$. Then $\overline{x''_n} = \overline{x}_n$ and
$\{x''_n\}$ is a full sequence in $M$. To see the latter note first
that  $\overline{x''_n}$ is increasing by construction. Let $y \in M$
be given. Then $\overline{y} \le m_1 \overline{x}_{n_1}$ for some natural
numbers $n_1$ and $m_1$. It follows that $y \le m_1 x'_{n_1} + w$ for some $w$
in $I$. Next, $w \le m_2 v_{n_2}$ for some natural numbers $m_2$ and
$n_2$. Put $m = \max\{m_1,m_2\}$ and $n = \max\{n_1,n_2\}$. Then
$$y \le m_1x'_{n_1} + w \le  m_1x'_{n_1} + m_2 v_{n_2} \le mx'_n +
mv_n = mx''_n.$$
This shows that $\{x''_n\}$ is full in $M$.

Suppose that we are given a full sequence in $M/I$. By the
argument above we may write this full sequence as
$\{\overline{x}_n\}$, where $\{x_n\}$ is a full sequence in $M$. Let
$\overline{y}_1, \overline{y}_2, \overline{y}_3, \dots$ be another
sequence in $M/I$ and let $m$ be a natural number such that
$\overline{x}_n \le m \overline{y}_n$ for all $n$. Then $x_n \le my_n
+ z_n \le m(y_n+z_n)$ for some $z_n$ in $I$. As $M$ has the CFP we
conclude that $x_1 \le (y_1+z_1) + (y_2+z_2) + \cdots + (y_k+z_k)$ for
some natural number $k$. It follows that $\overline{x}_1 \le
\overline{y}_1+\overline{y}_2 + \cdots + \overline{y}_k$. This shows
that $M/I$ has the CFP.
\end{proof}

\noindent One can deduce from this proposition that the
quotient of any separable \Cs{} of real rank zero with the CFP again
has the CFP. This, however, is well-known, cf.\ \cite{KN}.

The CFP does not pass to ideals. Indeed, if $M$ is any
conical monoid, then we can consider the monoid $M^\sharp:=M\sqcup
\{\infty\}$, where addition is given by $u + \infty = \infty$ for all
$u$ in $M^\sharp$. Then $M^\sharp$ has the CFP. Indeed, let
$\{x_n\}$ and $\{y_n\}$ be sequences in $M^\sharp$ such that
$\{x_n\}$ is full and $x_n \le m y_n$ for some $m$. Then
$x_k=y_k=\infty$ for all sufficiently large $k$, whence $x_1 \le
y_1+y_2 + \cdots + y_k$ if $k$ is chosen large enough so that $y_k=
\infty$. However, $M$ is an ideal in $M^\sharp$, and being an
arbitrary conical monoid, $M$ need not have the CFP (see e. g.\
Proposition~\ref{exem_1} below).

The situation is different for the strong CFP. We omit the trivial
proof of the proposition below.

\begin{prop} \label{prop:CFP-ideals1}
If $M$ is a conical monoid with the strong Corona
  Factorization Property, then every ideal in $M$ also has the strong
  Corona Factorization Property.
\end{prop}

\noindent In the case of conical refimenent monoids
the strong CFP and the CFP are related as follows:

\begin{prop} \label{prop:CFP-ideals2}
Let $M$ be a conical refinement monoid. Then $M$ has the strong Corona
Factorization Property if and only if every ideal in $M$
has the Corona Factorization Property.
\end{prop}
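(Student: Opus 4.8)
The plan is to prove both implications. The forward direction is straightforward: if $M$ has the strong CFP, then by Proposition~\ref{prop:CFP-ideals1} every ideal $I$ of $M$ also has the strong CFP, and since the strong CFP trivially implies the CFP (as remarked after the definition of the CFP), every ideal has the CFP. So the substance is entirely in the converse.

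For the converse, I would argue by contraposition. Suppose $M$ does \emph{not} have the strong CFP, and produce an ideal that fails the CFP. The failure of the strong CFP gives elements $x, y_1, y_2, \dots$ in $M$ and a natural number $m$ with $x \le m y_n$ for all $n$, yet $x \nleq y_1 + y_2 + \cdots + y_k$ for every $k$. The natural candidate for the offending ideal is $I := I(\{y_1, y_2, \dots\})$, the ideal generated by the $y_n$'s; note $x \in I$ since $x \le m y_1$. The goal is to exhibit a full \emph{sequence} $\{x_n\}$ inside $I$, together with the sequence $\{y_n\}$, witnessing that $I$ fails the CFP. The key point is that within the ideal $I$, the single element $x$ should play the role of $x_1$ in an appropriately chosen full sequence. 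The natural choice is the increasing sequence $x_n := y_1 + y_2 + \cdots + y_n$ (which generates $I$ and is therefore full \emph{in $I$}, by the remark that any increasing generating sequence is full), but this does not directly give $x_n \le m y_n$. I would instead engineer a full sequence whose first term dominates $x$ while still satisfying the comparison hypothesis term-by-term.

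The main obstacle, and where refinement is genuinely used, is reconciling two competing demands: the witnessing sequence $\{x_n\}$ must be \emph{full in $I$} (so its terms must grow to absorb all of $I$, in particular the later $y_j$'s), yet we simultaneously need $x_n \le m \tilde{y}_n$ for some sequence $\{\tilde y_n\}$ for which $\tilde y_1 + \cdots + \tilde y_k$ still fails to dominate $x_1 \ge x$. The strategy is to interleave: use the fullness data (each $y_j \le $ some multiple of some $x_{n}$, which in $I$ we can always arrange) to build $\{x_n\}$ increasing and full in $I$, then pass to a suitable subsequence or regrouping of the $y_n$'s---exploiting that $x \le m y_n$ holds for \emph{every} $n$, so we have infinitely many comparison witnesses to draw on---so that the comparison $x_n \le m y_{\sigma(n)}$ holds while the partial sums $y_{\sigma(1)} + \cdots + y_{\sigma(k)}$ remain below $y_1 + \cdots + y_l$ for a controlled $l$, keeping the non-domination of $x$ intact. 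Refinement enters when splitting the generators of $I$ against the $x_n$'s to verify fullness without destroying the failure of comparison. I would carry out the bookkeeping so that, in $I$, one obtains a full sequence $\{x_n\}$, a sequence $\{y_n'\}$, and the constant $m$ with $x_n \le m y_n'$ for all $n$ but $x_1 \nleq y_1' + \cdots + y_k'$ for all $k$, precisely contradicting the CFP for $I$. This completes the contrapositive, and hence the equivalence.
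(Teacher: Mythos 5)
Your forward direction is exactly the paper's: the strong CFP passes to ideals (Proposition~\ref{prop:CFP-ideals1}) and implies the CFP, so that half is fine. The gap is in the converse, and it is concrete: you choose the wrong ideal, and you are missing the one idea that makes the argument work. The paper's proof (run directly, not by contraposition) works inside the ideal $I$ generated by $x$, not the ideal generated by the $y_n$'s. In $I(x)$ the constant sequence $x_n=x$ is automatically full, because $x$ is an order unit of $I(x)$; the only problem is that the $y_n$ need not lie in $I(x)$, and this is where refinement enters (its only use in the whole proof): if $x\le my$, write $my=x+t$ and refine this equality via \cite[Lemma 1.9]{WE} to get a row $a_1\le a_2\le\cdots\le a_m$ with $a_1+\cdots+a_m=x$ and $a_m\le y$; then $y':=a_m$ satisfies $y'\le x$, $y'\le y$, and $x=a_1+\cdots+a_m\le my'$. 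Applying this to each $y_n$ produces $y'_n\in I(x)$ with $x\le my'_n$ and $y'_n\le y_n$, and the CFP of $I(x)$, applied to the constant full sequence, yields $x\le y'_1+\cdots+y'_k\le y_1+\cdots+y_k$. No bookkeeping is needed.

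By contrast, the difficulty you acknowledge for your ideal $I(\{y_n\})$ is a genuine obstruction, not bookkeeping. Any full sequence $\{x_n\}$ in $I(\{y_n\})$ must eventually dominate a multiple of every $y_j$; together with the required comparisons $x_n\le m\tilde y_n$, this forces the comparison sequence $\{\tilde y_n\}$ to generate the whole ideal as well. But the hypotheses give no comparison relations among the $y_j$'s at all, and the non-domination you start from controls only sums of \emph{distinct} $y_j$'s taken with multiplicity one --- you do not even know $x\nleq 2(y_1+\cdots+y_k)$. So there is no mechanism for building $\{\tilde y_n\}$ large enough for fullness while keeping its partial sums from dominating $x$. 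Worse, it is unclear that $I(\{y_n\})$ fails the CFP at all: your witnesses show only that $I(\{y_n\})$ fails the \emph{strong} CFP, and, by the very proposition being proved, that only means \emph{some} ideal of it fails the CFP; since the CFP does not pass to ideals (the paper's $M^\sharp=M\sqcup\{\infty\}$ example), this cannot be bootstrapped into a failure of the CFP for $I(\{y_n\})$ itself. The ideal that provably fails the CFP is $I(x)$, and seeing that requires precisely the refinement trick above, which your plan never supplies.
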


\begin{proof} If $M$ has the strong CFP, then so does every ideal $I$ in
  $M$, cf.\ Proposition~\ref{prop:CFP-ideals1}, and if $I$ has the
  strong CFP, then $I$ also has the CFP. (Note that in this direction
  of the proof we have not used that $M$ is a refinement monoid.)

To prove the reverse direction we need to establish the following
fact. Suppose that $x, y \in M$ and $m \in \N$ are such that
$x \le my$. Then
there is $y'$ in $M$ such that $x \le my'$, $y' \le x$, and $y' \le
y$. Indeed, $my = x+t$ for some $t$ in $M$. Hence, by
\cite[Lemma 1.9]{WE}, we have a
refinement
$$\begin{array}{c|cccc}
  & y & y &\cdots & y \\
\hline x & a_1 & a_2 & \cdots & a_m \\
  t & b_1 & b_2 & \cdots & b_m
\end{array}$$
with $a_1\leq a_2\leq\cdots\leq a_m$ (and
$b_1\geq b_2\geq\cdots\geq b_m$). Put $y'=a_m$. Then $y' \le x$, $y'
\le y$ and $my' \ge a_1+a_2 + \cdots + a_m = x$.

Assume now that every ideal in $M$ has the CFP (actually we need only
assume that every singly generated ideal in $M$ has the CFP). We wish
to show that $M$ has the strong CFP. Let $x,y_1,y_2, \dots$ in $M$ and
$m \in \N$ be given such that $x \le my_n$ for every $n$. Let $I$ be
the ideal in $M$ generated by $x$. Then $x$ is an order unit in $I$, so the
constant sequence $\{x_n\}$, with $x_n=x$ for all $n$, is a full
sequence in $I$. Use the fact established above to find $y'_n$ in $M$
such that $y'_n \le x$, $y'_n \le y_n$, and $x \le m y'_n$ for all
$n$. Then each $y'_n$ belongs to $I$, and $x_n = x \le my'_n$ for all
$n$. Since $I$ is assumed to have the CFP there is a natural number
$k$ such that
$$x = x_1 \le y'_1 + y'_2 + \cdots + y'_k \le y_1 + y_2 + \cdots +
y_k.$$
This proves that $M$ has the strong CFP.
\end{proof}

\noindent We can now characterize the class of \Cs s $A$ (of real rank zero)
for which the monoid $\V(A)$ has the strong CFP:

\begin{theor} \label{thm:CFP2}
 Let $A$ be a $\sigma$-unital \Cs{} of real rank zero. Then
  $\V(A)$ has the strong Corona Factorization Property (for monoids) if
  and only if every ideal $I$ in $A$ has
  the Corona Factorization Property (for \Cs s).
\end{theor}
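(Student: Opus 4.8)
The plan is to combine Theorem~\ref{thm:CFP} with the ideal-theoretic characterization of the strong CFP given in Proposition~\ref{prop:CFP-ideals2}, together with the standard lattice correspondence between ideals of $A$ and ideals of $\V(A)$ for real rank zero \Cs s. The key structural fact I would use is that, for a \Cs{} $A$ of real rank zero, the assignment $I \mapsto \V(I)$ is a lattice isomorphism from the ideals of $A$ onto the ideals of $\V(A)$, and moreover $\V(A)/\V(I) \cong \V(A/I)$, both recalled in Section~1. The hypothesis that $A$ is $\sigma$-unital and of real rank zero guarantees that $A \otimes \cK$ has a countable approximate unit of projections, so Theorem~\ref{thm:CFP}(ii) is available not just for $A$ but for each ideal of $A$ (every ideal of a $\sigma$-unital real rank zero \Cs{} is again $\sigma$-unital of real rank zero).

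First I would prove the reverse implication. Suppose every ideal $I$ in $A$ has the CFP (for \Cs s). Fix an arbitrary ideal $J$ of the monoid $\V(A)$; by the lattice isomorphism there is a closed two-sided ideal $I$ of $A$ with $J = \V(I)$, and $I$ inherits real rank zero and $\sigma$-unitality. By hypothesis $I$ has the CFP, so Theorem~\ref{thm:CFP}(ii) applied to $I$ gives that $\V(I) = J$ has the CFP (for monoids). Thus every ideal of the refinement monoid $\V(A)$ has the CFP, and Proposition~\ref{prop:CFP-ideals2} (recall $\V(A)$ is a refinement monoid since $A$ has real rank zero) yields that $\V(A)$ has the strong CFP.

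For the forward implication, suppose $\V(A)$ has the strong CFP. Given any ideal $I$ of $A$, the monoid $\V(I)$ is an ideal of $\V(A)$, so by Proposition~\ref{prop:CFP-ideals1} it inherits the strong CFP, and in particular $\V(I)$ has the CFP (for monoids). Since $I$ is again $\sigma$-unital of real rank zero, Theorem~\ref{thm:CFP}(ii) applied to $I$ then shows that $I$ has the CFP (for \Cs s). This is exactly the desired conclusion.

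The main obstacle I anticipate is purely bookkeeping rather than conceptual: one must verify that passing to an ideal $I$ preserves the hypotheses needed to invoke Theorem~\ref{thm:CFP}(ii), namely that $I$ is $\sigma$-unital, of real rank zero, and that $I \otimes \cK$ has a countable approximate unit of projections. Real rank zero passes to ideals by the hereditary-subalgebra characterization recalled in Section~1, and $\sigma$-unitality of a closed ideal of a $\sigma$-unital \Cs{} is standard; together these give the approximate-unit-of-projections hypothesis after stabilizing. Once these routine permanence checks are in place, the theorem follows by simply threading together the three earlier results in both directions, so the proof should be short.
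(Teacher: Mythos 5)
Your argument is essentially the paper's own proof: both pass through the ideal-lattice correspondence $I \mapsto \V(I)$ for real rank zero algebras, apply Theorem~\ref{thm:CFP}(ii) to each ideal, and finish with Proposition~\ref{prop:CFP-ideals2}; your splitting of the two implications, using Proposition~\ref{prop:CFP-ideals1} for the forward direction, is just an unfolding of the proof of Proposition~\ref{prop:CFP-ideals2}. One caveat: the permanence claim you call standard --- that every closed ideal of a $\sigma$-unital \Cs{} is $\sigma$-unital --- is false in general (for instance $\cK(H) \triangleleft B(H)$ with $H$ non-separable; both algebras have real rank zero and $B(H)$ is unital), but the paper's proof rests on the same unproved assertion (it declares each ideal of $A$ to be ``separable and of real rank zero''), so this is a gap you share with the paper rather than one you introduced, and both arguments are airtight when $A$ is separable.
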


\begin{proof} The assumptions on $A$ imply that $\V(A)$ is a conical
 refinement monoid. The mapping $I \mapsto \V(I) \subseteq \V(A)$ from
 ideals $I$ in $A$ into subsets of $\V(A)$ gives a bijection
 between ideals in $A$ and ideals in $\V(I)$ (in the case where $A$ is
 of real rank zero). Each ideal in $A$ is separable and of real
 rank zero. Hence, by Theorem~\ref{thm:CFP}, the condition that every
 ideal $I$ in $A$ has the CFP (for \Cs s) is equivalent to the condition that
 every ideal in $\V(A)$ has the CFP (for monoids). The theorem therefore
 follows from Proposition~\ref{prop:CFP-ideals2} above.
\end{proof}

\noindent We end this section by discussing an important consequence
of having the CFP:

\begin{theor} \label{finite-infinite} Let $M$ be a conical refinement
  monoid, and let $u$ be an element in $M$ such that $mu$ is properly
  infinite for some natural number $m$. Then $u$ itself is properly
  infinite if one of the two conditions below hold:
\begin{enumerate}
\item $M$ has the strong CFP,
\item $M$ has the CFP and $u$ is an order unit in $M$.
\end{enumerate}
\end{theor}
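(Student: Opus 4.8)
The plan is to prove both cases by first invoking Corollary~\ref{corollary_1_6}, which already does most of the work once we know that $nu$ is properly infinite for some $n$. That corollary (via Theorem~\ref{prop:nonsimple}) applies when $u$ is an order unit, so it directly supplies, in case (ii), a sequence $t_1,t_2,t_3,\dots$ of order units with $t_1+t_2+\cdots+t_k\le u$ for all $k$ and $nt_i$ properly infinite for all $i$. In case (i) I first need to reduce to a situation where $u$ is an order unit in a suitable ideal: let $I=I(\{u\})$ be the ideal generated by $u$, so that $u$ is an order unit of $I$, and observe that $I$ is again a conical refinement monoid. Since $mu$ being properly infinite in $M$ is equivalent to $mu$ being properly infinite in $I$, I can run the order-unit argument inside $I$; the conclusion that $u$ is properly infinite in $I$ is equivalent to $u$ being properly infinite in $M$.

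Next I would exploit the strong CFP (case (i)) or the CFP for the ideal $I$ (case (ii)) to turn the sequence $\{t_i\}$ into a domination $u\le t_1+t_2+\cdots+t_k$ for some $k$. The point is that each $nt_i$ is properly infinite, so $u\le nu\cdot(\text{something})$ can be arranged to read $u\le n\,t_i$ for every $i$ once we know $u\propto nt_i$; more precisely, since $u$ is an order unit (of $M$ in case (ii), of $I$ in case (i)) and each $nt_i$ is a properly infinite order unit, Lemma~\ref{lem:piou}(ii)-type reasoning gives $u\le nt_i$ for all $i$. In case (i) I apply the strong CFP to the element $u$, the sequence $\{t_i\}$, and the constant $m'=n$, obtaining $u\le t_1+\cdots+t_k$; in case (ii) I apply the CFP to the full (constant) sequence $x_i=u$ in $I$ and the sequence $\{t_i\}$, again yielding $u\le t_1+\cdots+t_k$.

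Once I have $u\le t_1+t_2+\cdots+t_k$ with $t_1+\cdots+t_k\le u$, the two inequalities combine: setting $w=t_1+\cdots+t_k$ I get $w\le u\le w$, and I can finish by a properly-infinite absorption argument. Indeed each $nt_i$ is properly infinite, so a multiple of $w=t_1+\cdots+t_k$ dominates $2w$; concretely $2u\le 2(t_1+\cdots+t_k)\le n(t_1+\cdots+t_k)$ (for $n\ge 2$, or after relabelling) and since the properly infinite pieces absorb, $n(t_1+\cdots+t_k)\le t_1+\cdots+t_k=w\le u$. Chaining these gives $2u\le u$, i.e.\ $u$ is properly infinite. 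The main obstacle I anticipate is the bookkeeping in this last step: ensuring that the properly-infinite hypothesis $nt_i\le t_i$ is correctly leveraged so that $n w\le w$, which requires using that a finite sum of properly infinite elements, each absorbing its own $n$-multiple, has the analogous absorption property; this uses conicality and the algebraic order but no refinement, and is the one place where the precise form of ``properly infinite'' ($2x\le x$, hence $mx\le x$) must be applied carefully to the sum rather than to the individual summands.
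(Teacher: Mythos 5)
Your reduction to the order-unit case via the ideal $I=I(\{u\})$, your use of Corollary~\ref{corollary_1_6} to produce order units $t_1,t_2,\dots$ with $t_1+\cdots+t_k\le u$ for all $k$ and $mt_i$ properly infinite, your observation that $u\le mt_i$ (order unit plus proper infiniteness of $mt_i$), and your single application of the (strong) CFP to get $u\le t_1+\cdots+t_k$ all match the paper's proof. The gap is in your final step. You claim that ``since the properly infinite pieces absorb, $n(t_1+\cdots+t_k)\le t_1+\cdots+t_k$.'' This does not follow: proper infiniteness of $nt_i$ gives $j(nt_i)\le nt_i$ for all $j$, i.e.\ multiples of $nt_i$ are absorbed by $nt_i$ itself, \emph{not} by $t_i$. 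The inequality $nt_i\le t_i$ would say precisely that $t_i$ is properly infinite, which is (an instance of) the very statement being proved and which fails in general for refinement monoids without the CFP --- Proposition~\ref{exem_1} exhibits a simple conical refinement monoid with a finite order unit $u$ such that $2u$ is properly infinite. So from $u\le w\le u$ with $w=t_1+\cdots+t_k$ you can only conclude $2u\le nw$ (which is properly infinite), and there is no way back down below $u$; summing properly infinite elements helps only with $nw$, never with $w$.

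The paper closes the argument differently, and this is exactly why Corollary~\ref{corollary_1_6} supplies an \emph{infinite} sequence with \emph{all} partial sums below $u$, rather than a single splitting $u=s+t$: one applies the CFP a \emph{second} time, to the tail $\{t_i\}_{i>k}$ (using the remark following the definition of the CFP, or the strong CFP directly in case (i)), to obtain $u\le t_{k+1}+\cdots+t_l$ for some $l>k$. The two dominating sums are disjoint pieces of the same sequence, so
\[
2u \;\le\; (t_1+\cdots+t_k)+(t_{k+1}+\cdots+t_l) \;=\; t_1+\cdots+t_l \;\le\; u,
\]
the last inequality holding because every partial sum of the sequence lies below $u$. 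If you replace your absorption step with this second CFP application, your proof becomes correct and coincides with the paper's.
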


\begin{proof} Upon replacing $M$ by the ideal generated by $u$ (and
  using Proposition~\ref{prop:CFP-ideals2}) it suffices to consider
  case (ii). By Corollary~\ref{corollary_1_6} there
  is a sequence $t_1,t_2,t_3, \dots$ of order units in $M$ such
  that $t_1+t_2+ \cdots + t_k \le u$ for all $k$, and $mt_n$ is
  properly infinite for all $n$. It follows in particular that $u \le
  mt_n$ for all $n$. Now, because $M$ has the CFP, there exists $k$
  such that $u \le t_1+t_2+ \cdots + t_k$. Applying the CFP again we
  obtain that $u \le t_{k+1} + t_{k+2} + \cdots + t_l$ for some $l >
  k$. This shows that
$$2u \le t_1+t_2 + \cdots + t_k + t_{k+1} + t_{k+2} + \cdots + t_l \le
u,$$
whence $u$ is properly infinite.
\end{proof}

\begin{corol} \label{cor:CFP}
Let $A$ be a $\sigma$-unital \Cs{} of real rank zero, and let
  $p$ be a projection in $A$ such that the $m$-fold direct sum $p
  \oplus p \oplus \cdots \oplus p$ is properly infinite (in $M_m(A)$)
  for some natural number $m$. Then $p$ itself is properly infinite if
  one of the following two conditions below hold:
\begin{enumerate}
\item Every ideal in $A$ has the CFP,
\item $A$ has the CFP and $p$ is a full projection in $A$.
\end{enumerate}
\end{corol}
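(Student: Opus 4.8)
The plan is to reduce this $C^*$-algebraic statement about projections to the purely monoid-theoretic result already established in Theorem~\ref{finite-infinite}. The bridge is the identification, valid for real rank zero algebras, between proper infiniteness of projections and proper infiniteness of the corresponding elements in $\V(A)$, together with the dictionary between ideals in $A$ and ideals in $\V(A)$. First I would observe that the hypotheses on $A$ guarantee that $\V(A)$ is a conical refinement monoid: real rank zero gives refinement by the Ara--Pardo result recalled in Section~1, and conicality holds for any $C^*$-algebra. So the abstract theorem is applicable once we translate the data.

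Next I would translate the hypothesis and the two conditions. Put $u=[p]\in\V(A)$. The assumption that $p\oplus p\oplus\cdots\oplus p$ ($m$-fold) is properly infinite in $M_m(A)$ says precisely that $mu=m[p]=[p\oplus\cdots\oplus p]$ is properly infinite in $\V(A)$, since proper infiniteness of a projection is equivalent to proper infiniteness of its class (as recalled just before Lemma~\ref{lem:piou}). For condition~(ii), $p$ full in $A$ means exactly that $u=[p]$ is an order unit in $\V(A)$. For condition~(i), the statement ``every ideal in $A$ has the CFP (for \Cs s)'' should be converted into ``$\V(A)$ has the strong CFP (for monoids)'': this is exactly the content of Theorem~\ref{thm:CFP2}. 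Thus condition~(i) here corresponds to hypothesis~(i) of Theorem~\ref{finite-infinite}, and condition~(ii) here (together with $A$ having the CFP) corresponds to hypothesis~(ii) there, using Theorem~\ref{thm:CFP}(i) to pass from the CFP of $A$ to the CFP of $\V(A)$.

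With the translation in place, both cases follow by direct appeal to Theorem~\ref{finite-infinite}: in either case that theorem yields that $u=[p]$ is properly infinite in $\V(A)$, which by the equivalence recalled above means that $p$ itself is properly infinite. The main subtlety, and the only point requiring care, is matching the CFP hypotheses correctly between the two settings. In case~(i) one must invoke Theorem~\ref{thm:CFP2} to see that ``every ideal in $A$ has the CFP'' gives the \emph{strong} CFP for $\V(A)$ (which is the right hypothesis for Theorem~\ref{finite-infinite}(i)); note that Theorem~\ref{thm:CFP2} requires only real rank zero and $\sigma$-unitality, which we have. In case~(ii) one invokes Theorem~\ref{thm:CFP}(i), which gives only the (ordinary) CFP for $\V(A)$ from the CFP of $A$, but this suffices because we are assuming $p$, and hence $u$, is an order unit, matching Theorem~\ref{finite-infinite}(ii) exactly. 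I expect no genuine obstacle beyond bookkeeping, since all the heavy lifting has been done in the preceding sections; the proof is essentially a two- or three-line dereferencing of the monoid theorem.
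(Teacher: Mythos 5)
Your proposal is correct and follows exactly the paper's argument: the paper's proof is precisely the dereferencing you describe, citing Theorem~\ref{finite-infinite} together with Theorems~\ref{thm:CFP} and \ref{thm:CFP2}, plus the fact that a projection $p$ is properly infinite if and only if $[p]$ is properly infinite in $\V(A)$. Your write-up simply makes explicit the bookkeeping (which CFP variant feeds into which case of the monoid theorem) that the paper leaves implicit.
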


\begin{proof} This follows from Theorem~\ref{finite-infinite} above
  together with Theorems~\ref{thm:CFP} and \ref{thm:CFP2}. Use also
  that if $p$ is a projection in $A$, then $p$ is properly infinite
  (as a projection in $A$) if
  and only if $[p]$ is properly infinite (as an element in $\V(A)$).
\end{proof}

\noindent It follows from this result that \Cs s with the CFP satisfy
the following dichotomy: either all full projections are properly
infinite or no full projections are properly infinite.

The corollary below was proved in an unpublished paper by
S.\ Zhang. (One can drop the separability assumption by passing to a
 suitable separable sub-\Cs.)

\begin{corol} A separable simple \Cs{} of real rank zero with the Corona
  Factorization Property is either stably finite or purely infinite.
\end{corol}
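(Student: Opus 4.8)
The plan is to translate the statement into one about the monoid $\V(A)$ and then invoke Theorem~\ref{finite-infinite}. First I would record the two structural inputs. Since $A$ is separable and of real rank zero, $A \otimes \cK$ has a countable approximate unit consisting of projections, so Theorem~\ref{thm:CFP}~(i) applies and $\V(A)$ inherits the CFP. Moreover, because $A$ is simple and of real rank zero, $\V(A)$ is a simple conical refinement monoid; in particular every nonzero element of $\V(A)$ is an order unit. With these in hand, the C*-algebraic dichotomy becomes a dichotomy about $\V(A)$: either $\V(A)$ has no nonzero infinite element, which is precisely the assertion that $A$ is stably finite, or it has one.

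So the real content is the second alternative. Assuming $A$ is not stably finite, I would fix an infinite element $w \in \V(A)$. As recalled in Section~1, in a \emph{simple} monoid every infinite element is automatically properly infinite, so $w$ is a properly infinite order unit. Now let $u$ be an arbitrary nonzero element of $\V(A)$; it is an order unit, and Lemma~\ref{lem:piou}~(ii) produces an $n \in \N$ such that $nu$ is properly infinite. At this point all the hypotheses of Theorem~\ref{finite-infinite}~(ii) are met—$\V(A)$ has the CFP, $u$ is an order unit, and a multiple of $u$ is properly infinite—so that theorem yields that $u$ itself is properly infinite.

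Since $u$ was an arbitrary nonzero element, every nonzero element of $\V(A)$ is properly infinite; equivalently, every nonzero projection in every matrix algebra over $A$ is properly infinite. In particular every nonzero projection in $A$ is infinite, and because a simple algebra of real rank zero has projections in every nonzero hereditary subalgebra, this forces $A$ to be purely infinite. Combining the two cases, $A$ is either stably finite or purely infinite.

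I do not expect a genuine obstacle here, since the substantive work has already been carried out in Theorem~\ref{finite-infinite} (which rests on Corollary~\ref{corollary_1_6}) and in Theorem~\ref{thm:CFP}. The only points demanding a little care are the two passages between the C*-algebraic and the monoid formulations: verifying that ``$A$ not stably finite'' corresponds exactly to the existence of an infinite element of $\V(A)$, and that ``every nonzero projection properly infinite'' delivers pure infiniteness in the simple real-rank-zero setting. Both are standard, the latter using only that real rank zero guarantees a nonzero projection inside each nonzero hereditary subalgebra.
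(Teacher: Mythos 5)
Your proposal is correct and follows essentially the same route as the paper's own proof: pass to the simple conical refinement monoid $\V(A)$ via Theorem~\ref{thm:CFP}, use simplicity to upgrade an infinite element to a properly infinite order unit, show a multiple of any nonzero element is properly infinite, and conclude with Theorem~\ref{finite-infinite}. The only cosmetic difference is that you cite Lemma~\ref{lem:piou}~(ii) where the paper repeats the same two-line inequality argument inline.
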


\begin{proof} Suppose that $A$ is a separable simple \Cs{} of real
  rank zero and with the CFP. Then the monoid $\V(A)$ is a simple,
  countable refinement monoid with the CFP (by
  Theorem~\ref{thm:CFP}). If all elements in $\V(A)$ are finite, then
  all projections in $A$ and in matrix algebras over $A$ are finite,
  whence $A$ is stably finite. Suppose that $\V(A)$ contains an infinite
  element. Then $\V(A)$ contains a properly infinite element $u$
  (because all infinite elements in a simple monoid are properly infinite). Let
  $v$ be any other non-zero element in $\V(A)$. Then $v$ is an order unit (because $\V(A)$ is simple), and so $u \le mv$ for some natural
  number $m$. It follows that $mv$ is properly infinite (because $mv +
  mv \le u \le mv$, the former inequality holds because $u$ is
  a properly infinite order unit). By Theorem~\ref{finite-infinite} we get
  that $v$ is itself properly infinite. This shows that
   all non-zero elements of $\V(A)$
  are properly infinite. This translates into the statement that all
  non-zero projections in $A$ (and in matrix algebras over $A$) are
  properly infinite. As $A$ is assumed to be
of real rank zero, this implies that
  $A$ is purely infinite.
\end{proof}

\noindent It is an open problem if all \Cs s of real rank zero have
the CFP. At the level of monoids, the
analog problem has a negative answer.

\begin{prop}\label{exem_1}
There exists a simple conical refinement monoid $M$ with an order unit $u$ such that $u$ is finite while $2u$ is properly infinite. In
particular, $M$ does not have the CFP.
\end{prop}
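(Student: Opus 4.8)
The plan is to construct a concrete simple conical refinement monoid $M$ exhibiting the required pathology: a finite order unit $u$ whose double $2u$ is properly infinite. The failure of the CFP is then automatic, since $\{u,u,u,\ldots\}$ is a full sequence (being constant at an order unit) satisfying $u \le 2u$ for each term, yet $u \le u + u + \cdots + u = ku$ would force $u$ to be properly infinite, contradicting finiteness. So the entire difficulty is the \emph{existence} of the monoid.

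The natural strategy is to build $M$ as an inductive limit (a direct limit) of simple refinement monoids, or to present it via generators and relations and then verify refinement directly. One workable approach is to start from the ``universal'' monoid generated by an element $u$ together with the relation $2u = 2u + u$ (which forces $2u$ to be properly infinite) while refraining from imposing $u = u + u$ (so that $u$ stays finite). Concretely, I would consider the commutative monoid presented by generators and a carefully chosen relation set, engineered so that $2u$ absorbs everything above it but $u$ does not absorb $u$. The candidate is essentially a monoid in which the elements are pairs $(m,n)$ with a twisted addition that becomes ``idempotent at infinity'' only once the first coordinate is large enough; alternatively, one realizes $M$ as a submonoid or quotient of $\Z^+ \times M_\infty$ for a suitable absorbing part. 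The point is to make $u$ correspond to a generator that is an order unit (so $M$ is forced to be simple), finite (no $y \ne 0$ with $u = u+y$), while $2u$ satisfies $2u + u = 2u$.

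The two properties that must be checked by hand are \textbf{refinement} and the combination \emph{$u$ finite but $2u$ properly infinite with $u$ an order unit}. Verifying refinement is the main obstacle: for an explicitly presented monoid one must show that every equation $a_1 + a_2 = b_1 + b_2$ admits a refinement matrix, and this typically requires a case analysis organized around which of the elements lie in the ``properly infinite part'' (at or above $2u$) versus the ``finite part.'' The absorbing behavior of $2u$ makes refinements involving infinite elements easy, so the real work concentrates on equations among finite elements, where I would lean on the specific arithmetic structure of the presentation (for instance, reducing to refinement in $\Z^+$ or in a known simple dimension-monoid-like building block). Once refinement is in place, simplicity follows from $u$ being an order unit with every nonzero element comparable to a multiple of $u$, finiteness of $u$ follows from tracking the defining relations to show no nontrivial $y$ satisfies $u = u + y$, and proper infiniteness of $2u$ is immediate from $2u = 2u + u \ge 2\cdot 2u$ after noting $2u \ge u$ so $4u \le 2u + 2u = 2u$.

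Thus the structure of the argument will be: (1) write down the monoid $M$ via an explicit presentation or inductive-limit construction; (2) prove $M$ has refinement by the case analysis described, isolating the finite part as the delicate case; (3) identify $u$, check it is an order unit (hence $M$ is simple), verify $u$ is finite and $2u$ is properly infinite; and (4) conclude the CFP fails using the constant full sequence at $u$. I expect step (2) to be where essentially all the effort lies, and I would try first to realize $M$ as a limit of monoids already known to have refinement, since refinement is preserved under the relevant direct limits, thereby sidestepping a lengthy hand verification.
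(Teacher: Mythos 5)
There are two genuine gaps here. The first and most serious: the existence of the monoid, which you yourself identify as ``the entire difficulty,'' is never established --- what you give is a plan, not a construction, and the plan as sketched would not succeed. The natural candidate arising from your proposed presentation (one generator $u$ with the relation $3u=2u$) is the monoid $M_0=\{0,u,\infty\}$ with $u+u=\infty$; it is simple and conical, $u$ is a finite order unit, and $2u=\infty$ is properly infinite, but $M_0$ is \emph{not} a refinement monoid: the equation $u+u=u+2u$ (both sides equal $\infty$) admits no refinement matrix, since the two entries in the row of $\infty$ would have to be summands of $u$, forcing both to equal $u$, which in turn forces the entries in the row of $u$ to be $0$, a contradiction. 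Any monoid that works must therefore be much larger, and producing one is genuinely hard: the paper does not verify refinement by hand but instead invokes Wehrung's embedding theorem \cite{Wehrung_Embedding}, which says that every simple conical commutative monoid (here $M_0$) embeds as a \emph{unitary}, cofinal submonoid of a simple conical refinement monoid $M$. Unitarity is precisely what keeps $u$ finite in $M$ (any $x$ with $u+x=u$ must lie in $M_0$, hence $x=0$), and cofinality keeps $u$ an order unit. That existence theorem is the substance of Proposition~\ref{exem_1}; without it, or an equivalent construction, the proof is missing its core.

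The second gap is your claim that failure of the CFP ``is then automatic.'' It is not, and the paper itself exhibits the counterexample: the remark following Proposition~\ref{exem_1} points out that $M_0$ has a finite order unit $u$ with $2u$ properly infinite and nevertheless satisfies the \emph{strong} CFP --- so the implication you call automatic genuinely requires the refinement property. Your specific argument also collapses on inspection: taking $x_n=u$, $y_n=u$ and $m=2$, the CFP conclusion is $u\le y_1+\cdots+y_k=ku$, which is trivially true (add $(k-1)u$), so no contradiction arises. The correct deduction is Theorem~\ref{finite-infinite}: one first uses refinement, via Corollary~\ref{corollary_1_6}, to produce order units $t_1,t_2,\ldots$ with $t_1+\cdots+t_k\le u$ for all $k$ and $2t_n$ properly infinite (whence $u\le 2t_n$), and only then does the CFP, applied twice to this nonconstant sequence, yield $2u\le u$, contradicting the finiteness of $u$.
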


\begin{proof}
The monoid $M_0= \{0,u,\infty\}$, where $u+u= \infty$, is simple and
conical. The element $u$ is an order unit in $M_0$, $u$ is finite,
while $2u = \infty$ is infinite.
By \cite{Wehrung_Embedding}, $M_0$ is a unitary submonoid
of a simple conical refinement monoid $M$. Obviously $2u$ remains
infinite in $M$. Since $M_0$ is cofinite in $M$, we see that $u$ is
an order unit in $M$. If $u+x=u$ for some $x\in M$, then $x\in M_0$
because $M_0 \subseteq M$ is unitary, whence $x=0$. Therefore $u$
remains finite in $M$.

The last conclusion follows from Theorem~\ref{finite-infinite}.
\end{proof}

\noindent The monoid $M_0$ from the proof of Proposition~\ref{exem_1}
above has some intesting features. It is a simple monoid with a finite
element $u$ such that $2u$ is properly infinite. At the same time it
does satisfy the strong CFP. (This is easy to check, there are not so
many ways in which one can choose the elements $x,y_1,y_2, \dots$.)
This shows that in Theorem~\ref{finite-infinite} one cannot omit the
assumption that the monoid has the refinement property.

\section*{Acknowledgements}

The first and second named authors were partially supported by a
MEC-DGESIC grant (Spain) through Project MTM2008-0621-C02-01/MTM, and
by the Comissionat per Universitats i Recerca de la Generalitat de
Catalunya. The third named author was supported by a grant from the
Danish Natural Science Research Council (FNU). The second named author
wishes to thank G. del Peral for inspiring conversations concerning
the results in Section 3. Part of this research was carried out during
visits of the first and third named authors to UAB (Barcelona), of
the second named author to SDU (Odense), and of the two first
mentioned authors to Copenhagen. We wish to thank all parties
involved for the hospitality extended to us.

\end{document}